\theoremstyle{plain}
\newtheorem{Theorem}{Theorem}[section]
\newtheorem{Proposition}[Theorem]{Proposition}
\newtheorem{Corollary}[Theorem]{Corollary}
\theoremstyle{definition}
\newtheorem{Remark}[Theorem]{Remark}
\newtheorem{Definition}[Theorem]{Definition}
\newtheorem{Question}[Theorem]{Question}
\newtheorem{Example}[Theorem]{Example}
\newtheorem{Terminology}[Theorem]{Terminology}
\newtheorem{Notation}[Theorem]{Notation}
\newtheorem*{AxiomE1}{Axiom (E1)}
\newtheorem*{AxiomE2}{Axiom (E2)}
\newtheorem*{AxiomE3}{Axiom (E3)}
\newtheorem*{AxiomE4}{Axiom (E4)}
\newtheorem*{AxiomE5}{Axiom (E5)}
\theoremstyle{remark}
\DeclareMathOperator{\Der}{Der}
\DeclareMathOperator{\Hom}{Hom}
\DeclareMathOperator{\End}{End}
\DeclareMathOperator{\Empty}{{\_\!\_\,}}
\DeclareMathOperator{\Comma}{\downarrow}
\DeclareMathOperator*{\Lim}{lim}
\DeclareMathOperator*{\Colim}{colim}
\DeclareMathOperator{\Lan}{Lan}
\DeclareMathOperator{\intHom}{\underline{\mathcal{D}\mathrm{fl}}\mathrm{g}}
\newcommand{\bbQ}{\mathbb{Q}}
\newcommand{\bbR}{\mathbb{R}}
\newcommand{\bbZ}{\mathbb{Z}}
\newcommand{\calA}{\mathcal{A}}
\newcommand{\calB}{\mathcal{B}}
\newcommand{\calC}{\mathcal{C}}
\newcommand{\calF}{\mathcal{F}}
\newcommand{\calI}{\mathcal{I}}
\newcommand{\calJ}{\mathcal{J}}
\newcommand{\calX}{\mathcal{X}}
\newcommand{\frakg}{\mathfrak{g}}
\newcommand{\op}{{\mathrm{op}}}
\newcommand{\id}{\mathrm{id}}
\newcommand{\Id}{\mathrm{1}}
\newcommand{\pr}{\mathrm{pr}}
\newcommand{\Lie}{\mathcal{L}}
\newcommand{\Set}{{\mathcal{S}\mathrm{et}}}
\newcommand{\Mfld}{{\mathcal{M}\mathrm{fld}}}
\newcommand{\Top}{{\mathcal{T}\mathrm{op}}}
\newcommand{\Eucl}{{\mathcal{E}\mathrm{ucl}}}
\newcommand{\Dflg}{{\mathcal{D}\mathrm{flg}}}
\newcommand{\Elst}{{\mathcal{E}\mathrm{lst}}}
\newcommand{\eFun}[1]{{#1}}
\newcommand{\dgAlg}{{\mathrm{dg}\mathcal{A}\mathrm{lg}}}
\newcommand{\Lanyy}{{\mathbb{L}}}
\renewcommand{\phi}{\varphi}
\renewcommand{\epsilon}{\varepsilon}
\newcommand{\Hide}[1]{}
\renewcommand{\Hide}[1]{{\color{blue}#1}}
\tikzset{bolli/.style={
    very thick,
    fill=lightgray,
    scale=1.3,
    >=Stealth}}
\tikzset{point/.style={
    shape=circle,
    draw,
    fill=black,
    inner sep=1.5pt]}}
\tikzset{whitepoint/.style={
    shape=circle,
    draw,
    fill=white,
    inner sep=1.5pt]}}
\tikzset{redpoint/.style={
    shape=circle,
    fill=red,
    inner sep=1.85pt]
    }}
\begin{document}

\title{Elastic Diffeological Spaces}

\author[C.~Blohmann]{Christian Blohmann}
\address{Max-Planck-Institut f\"ur Mathematik, Vivatsgasse 7, 53111 Bonn, Germany}
\email{blohmann@mpim-bonn.mpg.de}

\subjclass[2020]{58A40 (58A03, 18F15, 18F40)}

\date{\today}

\keywords{Diffeological space, tangent structure, Cartan calculus}

\begin{abstract}
We introduce a class of diffeological spaces, called elastic,
on which the left Kan extension of the tangent functor of smooth manifolds defines an abstract tangent functor in the sense of Rosick\'{y}. On elastic spaces there is a natural Cartan calculus, consisting of vector fields and differential forms, together with the Lie bracket, de Rham differential, inner derivative, and Lie derivative, satisfying the usual graded commutation relations. Elastic spaces are closed under arbitrary coproducts, finite products, and retracts. Examples include manifolds with corners and cusps, diffeological groups and diffeological vector spaces with a mild extra condition, mapping spaces between smooth manifolds, and spaces of sections of smooth fiber bundles.
\end{abstract} 
\maketitle


\section{Introduction}

\subsection{The quest for a Cartan calculus on diffeological spaces}

A category that contains smooth manifolds as a full subcategory but has better properties, such as having all limits, colimits, and exponential objects, is often called a convenient setting for differential geometry. The price to be paid for this convenience is that such categories are usually too large as to allow for strong geometric results that hold for all its objects. A typical example is the category of diffeological spaces. It is a quasi-topos with all its good categorical properties. But since it contains arbitrary quotients, arbitrary subsets, and arbitrary intersections of smooth manifolds, topological spaces, vector spaces of arbitrary cardinality, and much more, a theorem that holds for all diffeological spaces must hold in all these cases. So instead of trying to prove statements that would have to cover this impossible generality of situations, the task is often to identify conditions that are strong enough to prove a desired result, but weak enough to allow for a wide range of examples and applications. 

A considerable part of the infinitesimal differential geometric computations on a smooth manifold $M$ can be carried out in its Cartan calculus, which consists of the tangent bundle $TM \to M$, the Lie bracket of vector fields, the graded algebra of differential forms $\Omega(M)$, together with the de Rham differential $d$, the inner derivative $\iota_v$ and the Lie derivative $\Lie_v$ for every vector field $v$, which satisfy the relations
\begin{equation*}
\begin{gathered}[]
  [d, d] = 0 \,,\quad 
  [\iota_v, \iota_w] = 0 \,,\quad 
  [\iota_v, d] = \Lie_v \,,
  \\
  [\Lie_v, \iota_w] = \iota_{[v,w]} \,,\quad
  [\Lie_v,d] = 0 \,,\quad
  [\Lie_v, \Lie_w] = \Lie_{[v,w]} \,, 
\end{gathered}
\end{equation*}
where the bracket is the graded commutator of graded derivations of $\Omega(M)$. For example, local definitons and calculations of symplectic geometry can typically be worked out in the Cartan calculus, such as hamiltonian vector fields, Poisson brackets, hamiltonian actions, Dirac structures, generalized complex geometry, contact structures, the $L_\infty$-algebra of a multisymplectic structure, homotopy momentum maps, infinitesimal models for equivariant cohomology, etc. Another example is local Lagrangian Field Theory, where the derivation of the Euler-Lagrange equations, local symmetries, Noether's theorems, the theory of Jacobi fields, etc.~take place in the Cartan calculus of the infinite jet bundle, also known as the variational bicomplex \cite{DeligneFreed:1999}. In fact, the question we will address in this paper was motivated by current developments in Lagrangian Field Theory and geometric deformation theory, areas where the basic geometric objects, spaces of fields and paths in a moduli space of structures, are naturally equipped with diffeologies. 

\begin{Question}
What are the conditions a diffeological space must satisfy so that it is equipped with a natural Cartan calculus?
\end{Question}

Of course, there are always the tautological conditions which promote the desired outcome to axioms, in our case the existence of a Cartan calculus. The task is to identify a set of conditions that is minimal or at least so small that it can be verified in a wide range of cases.

The basic structures of the Cartan calculus on a smooth manifold $M$, the differential graded algebra of differential forms $\Omega(M)$ and the tangent bundle $TM \to M$, are local, that is, they can be defined first on the open subsets $U \subset \bbR^n$ of a chart $U \subset M$ and then glued together on an atlas. More precisely, the functor $U \mapsto \Omega(U)$ is a sheaf and the tangent functor $U \mapsto TU$ a cosheaf, so that
\begin{align*}
  \Omega(M) 
  &\cong \Lim_{U \to M} \Omega(U)
  \\
  TM &\cong \Colim_{U \to M} TU
  \,,
\end{align*}
where the limit in differential graded algebras and the colimit in manifolds are taken over the category of charts of a maximal atlas of $M$. For a diffeological space $X$ we simply replace the category of charts by the category of plots and obtain the definitions
\begin{align*}
  \Omega(X) 
  &:= \Lim_{U \to X} \Omega(U)
  \\
  TX &:= \Colim_{U \to X} TU
\end{align*}
of the de Rham complex and the tangent diffeological space of $X$.

The maps $X \mapsto \Omega(X)$ and $X \mapsto TX$ are the pointwise left Kan extensions from smooth manifolds to diffeological spaces, which shows that they are functorial in $X$. The left Kan extension also applies to natural transformations, such as the bundle projection $\pi_U: TU \to U$ and the zero section $0_U: U \to TU$. This suggests, that the left Kan extension is the natural method to generalize the Cartan calculus of smooth manifolds to diffeological spaces.

How do we define the Lie bracket of vector fields on a diffeological space? The first guess is to start from the Lie algebras $\calX(U) = \Gamma(U,TU)$ of vector fields on all plots $U \to X$. However, $U \mapsto \calX(U)$ is not a functor, so that the left Kan extension cannot be applied. We could map the vector fields to the space of derivations of $C^\infty(X) = \Omega^0(X)$, which is equipped with the commutator bracket. However, this map is generally not injective, and even if it is, its image may not be closed under the bracket. Worse, the map $X \mapsto \Der(C^\infty(X))$ is still not a functor, so that this does not solve the problem of naturality. The conclusion is that the spaces of vector fields on plots are not a good starting point for the construction of a natural Cartan calculus on diffeological spaces.

Fortunately, the situation has been analyzed carefully by Rosick\'y who has identified the natural structure of the tangent functor that is needed to define the Lie bracket of vector fields. He defines an abstract tangent structure on a category $\calC$ to be an endofunctor $T: \calC \to \calC$ together with the natural transformations of the bundle projection $\pi_X: TX \to X$, zero section $0_X: X \to TX$, fiberwise addition $+_X: TX \times_X TX \to TX$, exchange of order of differentiation $\tau_X: T^2 X \to T^2 X$, and inclusion of the tangent fibers into the vertical tangent space $\lambda_X: TX \to T^2 X$, which have to satisfy a rather long list of axioms (Definition~\ref{def:TangentStructure}). It is instructive to see how all these structures come together to define the Lie bracket~\eqref{eq:BracketDef} of vector fields, avoiding any reference to the commutator bracket of derivations of some structure ring.

For us, the main advantage of Rosick\'y's approach is that all the structure is given by functors and natural transformations, to which we can apply the left Kan extension. However, this does not yield an abstract tangent structure on all diffeological spaces. The main issue is that the pointwise left Kan extension, which is given by a colimit, does not preserve limits, in particular the pullback on which the fiberwise addition of tangent vectors is defined. More precisely, the natural morphism
\begin{equation}
\label{eq:theta2Intro}
  \Colim_{U \to X} TU \times_U TU
  \longrightarrow
  TX \times_X TX
  \,,
\end{equation}
is not an isomorphism for all diffeological spaces $X$. In fact, this map is generally neither surjective nor injective, as the following two examples show.

\begin{Example}[Axis cross of the plane]
Consider the subset $\{ (x,y) \in \bbR^2 ~|~ xy = 0\} \subset \bbR^2$ with the subspace diffeology. The two tangent vectors at the orgin in the direction of the $x$-axis and the $y$-axis cannot be represented on the same plot (Figure~\ref{fig:NonElastic}). It follows that~\eqref{eq:theta2Intro} is not surjective.
\end{Example}

\begin{Example}[Folded line]
Consider the diffeological quotient space of the action $\bbZ_2 \times \bbR \to \bbR$, $(k,x) \mapsto kx$, where $\bbZ_2 = \{1,-1\}$. The quotient map $\bbR \to \bbR/\bbZ_2$ is a plot. The tangent vectors $(0,1)$ and $(0,-1)$ on its domain represent the same tangent vector on $\bbR/\bbZ_2$. This implies that the pairs $\zeta = \bigl( (0,1), (0,1)\bigr)$ and $\eta = \bigl( (0,1), (0,-1)\bigr)$ in $T\bbR \times_\bbR T\bbR$ represent the same pair of tangent vectors in $T(\bbR/\bbZ_2) \times_{\bbR/\bbZ_2} T(\bbR/\bbZ_2)$. Since the tangent morphism of every morphism of plots preserves the sum of a pair of tangent vectors at a point and since the sum of $\zeta$ is zero but that of $\eta$ is not, the two pairs cannot be equivalent in $\Colim_{U \to \bbR/\bbZ_2} TU \times_U TU$. We conclude that~\eqref{eq:theta2Intro} is not injective.
\end{Example}

\subsection{The axiom of elasticity}

Only if~\eqref{eq:theta2Intro} is an isomorphism, the left Kan extension of the addition $+_U$ on on plots is a morphism $+_X: TX \times_X TX \to TX$ that can be viewed as a fiberwise addition of tangent vectors on the diffeological space $X$. Therefore, requiring~\eqref{eq:theta2Intro} to be an isomorphism is the first condition we have to impose for a diffeological space to have a natural Cartan calculus.

A $k$-form in $\Omega(X)$ is a family of $k$-forms on all plots $U \to X$ that are compatible with the pullbacks along morphisms of plots. A vector field, however, is not represented by a family of vector fields on the plots. For this reason, there is no natural operation of inner derivative on $\Omega(X)$. For the inner derivative, we have to define a $k$-form as a fiberwise multilinear and antisymmetric morphism
\begin{equation*}
  \alpha: 
  \underbrace{TX \times_X \ldots \times_X TX}_{ =: T_k X}
  \longrightarrow \bbR
  \,.
\end{equation*}
(We avoid defining a tensor product, which would entail the usual technical issues of completion when the fibers are infinite-dimensional.) The notation $T_k X$ for the $k$-fold fiber product is standard in the literature on abstract tangent structures. The inner derivative of $\alpha$ with respect to a vector field $v: X \to TX$ is then given by precomposition
\begin{equation*}
  \iota_v \alpha :
  T_{k-1} X \xrightarrow{~\cong~} X \times_X T_{k-1} X
  \xrightarrow{~v \times_X \id~}
  T_k X \xrightarrow{~\alpha~}
  \bbR
  \,.
\end{equation*}
If we define forms as maps $T_k X \to \bbR$, how can we define the differential? The differential of a function $f: TX \to X$ is given by the tangent map,
\begin{equation*}
  df:  TX \xrightarrow{~Tf~} T\bbR 
  \xrightarrow{~\cong~} \bbR \times \bbR
  \xrightarrow{~\pr_2~} \bbR
  \,.
\end{equation*}
However, the functions and exact 1-forms do not generate the ring of forms, so that this construction cannot be extended to higher forms.

We are now in the following dilemma. Either we define differential forms as families of forms on the plots, in which case we have a differential but no inner derivative. Or we define them as fiberwise multilinear and antisymmetric morphisms $T_k X \to \bbR$, in which case we have an inner derivative, but no differential. The way out is to require that the two notions of differential forms coincide. 

We have already imposed the condition that~\eqref{eq:theta2Intro} is an isomorphism, which induces an isomorphism
\begin{equation}
\label{eq:Intro02}
  \Hom(TX \times_X TX, \bbR)
  \xrightarrow{~\cong~}
  \Lim_{U \to X} \Hom( TU \times_U TU, \bbR )
  \,.
\end{equation}
It is easy to see that this isomorphism is equivariant with respect to the exchange of the two fractors for the fiber product. Moreover, the maps are fiberwise multilinear on $T_k X$ if and only if they are on all $T_k U$. This shows that the isomorphism~\eqref{eq:Intro02} induces an isomorphism from fiberwise multilinear and antisymmetric morphisms on $TX \times_X TX$ to $\Omega^2(X)$. Since we need such an isomorphism for forms of arbitrary degree $k$, we have to impose the following axiom:
\begin{AxiomE1}
 The natural morphisms
\begin{equation*}
  \theta_{k,X}: 
  \Colim_{U \to X} T_k U \longrightarrow T_k X
  \,,
\end{equation*}
are isomorphisms for all $k > 1$.
\end{AxiomE1}

This axiom has the following geometric interpretation. Every tangent vector $v_x \in T_x X$ is represented by a path. One can picture this by stretching out $x$ in the direction of $v_x$ to a smooth path $\gamma: (-\epsilon, \epsilon) \to X$ of short but non-zero length through $\gamma(0) = x$, such that the coordinate tangent vector $\tfrac{\partial}{\partial t}$ at the origin of the interval is mapped by $T_0\gamma$ to $v_x$. In this sense, every point of a diffeological space has some elasticity in a single infinitesimal direction. 

However, we generally cannot simultaneously stretch out $x$ in the directions of several tangent vectors $v^1_x, \ldots, v^k_x \in TX_x$. That is, we cannot always find a plot $p: U \to X$ with $p(0) = x$ such that $(T_0 p) \tfrac{\partial}{\partial t^i} = v_x^i$, where $(t^1, \ldots, t^k)$ are the canonical coordinates of $U \subset \bbR^k$. And even if we can find such a plot, it may happen that the tangent map $Tp$ is not injective at $0$, so that we cannot identify the tangent vectors on $X$ with the coordinate vectors on $U$. This identification is possible at every point $x \in X$ if and only if the morphism $\theta_{k,X}$ is a bijection. If in addition we want this condition to be compatible with the smooth structure, then we have to make the stronger assumption that $\theta_{k,X}$ is an isomorphism of diffeological spaces. In this sense, Axiom~(E1) captures the geometric idea of the ``elasticity'' of a diffeological space in which any finite set of tangent directions can be streched out to a smooth ``membrane'' given by the image of a plot.

\begin{Example}[Pasta diffeologies]
We can equip a smooth manifold $M$ with an alternative diffeology by defining the plots be all smooth maps $p: U \to M$ such that the rank of $Tp: TU \to TM$ is everywhere less than or equal to $r$. Since (i) the precomposition of $p$ with a smooth function $f$ does not increase the rank, (ii) the rank is a local property, and (iii) the rank of constant maps is zero, this defines a diffeology, which we call the rank-$r$-restricted diffeology.

For $r=0$ we obtain the discrete diffeology. If $r=1$, then every plot factors through $\bbR$, so that we obtain the \textbf{Spaghetti diffeology} \cite[Sec.~1.10, footnote 1]{Iglesias-Zemmour:Diffeology}. The case $r = 2$ might then be called the \textbf{Fettuccine diffeology}. It was suggested by the participants of the AMS-EMS-SMF meeting 2022 in Grenoble that the case $r=3$ should be called the \textbf{Gnocchi diffeology}. For the rank-$r$-restricted diffeology the morphism $\theta_{k,M}$ of Axiom~(E1) is an isomorphism for all $k \leq r$ but not for $r < k < \dim M$.
\end{Example}

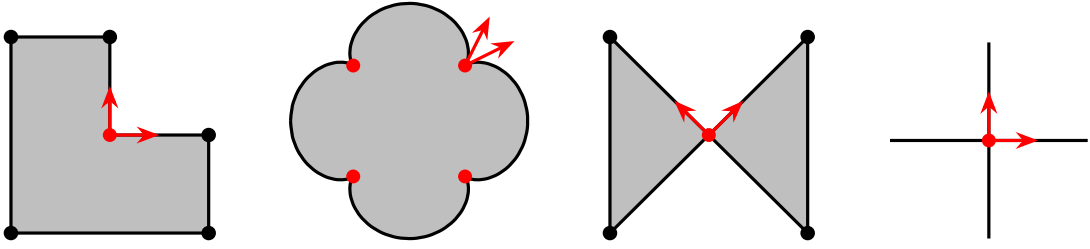
\begin{figure}
\centering
\begin{tikzpicture}[bolli]

\filldraw
    (0,0) node [point] {} -- 
    (2,0) node [point] {} -- 
    (2,1) node [point] {} -- 
    (1,1) node [redpoint] {} -- 
    (1,2) node [point] {} -- 
    (0,2) node [point] {} -- cycle;

\draw [->, red]
    (1, 1) -- (1, 1.5);

\draw [->, red]
    (1, 1) -- (1.5, 1);

\end{tikzpicture}
\qquad
\begin{tikzpicture}[bolli]

\filldraw[
    domain=-pi:pi,
    trig format=rad,
    samples=80,
    smooth]
    plot[parametric,
        mark indices={10,30,50,70}]
    (
        {cos(\x) + 0.2 * cos(5 * \x)},
        {sin(\x) + 0.2 * sin(5 * \x)} );

\draw
    ( {cos(180*10/40) + 0.2 * cos(5*180*10/40)},
      {sin(180*10/40) + 0.2 * sin(5*180*10/40)} ) 
      node [redpoint] {};
\draw      
    ( {-cos(180*10/40) - 0.2 * cos(5*180*10/40)},
      {sin(180*10/40) + 0.2 * sin(5*180*10/40)} ) 
      node [redpoint] {};
\draw      
    ( {cos(180*10/40) + 0.2 * cos(5*180*10/40)},
      {-sin(180*10/40) - 0.2 * sin(5*180*10/40)} ) 
      node [redpoint] {};
\draw      
    ( {-cos(180*10/40) - 0.2 * cos(5*180*10/40)},
      {-sin(180*10/40) - 0.2 * sin(5*180*10/40)} ) 
      node [redpoint] {};

\draw [->, red]
    ( {cos(180*10/40) + 0.2 * cos(5*180*10/40)},
      {sin(180*10/40) + 0.2 * sin(5*180*10/40)} ) 
    -- +(0.25, 0.5);

\draw [->, red]
    ( {cos(180*10/40) + 0.2 * cos(5*180*10/40)},
      {sin(180*10/40) + 0.2 * sin(5*180*10/40)} ) 
    -- +(0.5, 0.25);

\end{tikzpicture}
\qquad
\begin{tikzpicture}[bolli]

\filldraw
    (-1,1) -- 
    (0,0) node [redpoint] {} --
    (1,1) --
    (1,-1) --
    (0,0) --
    (-1,-1) -- cycle;

\draw
    (-1,-1) node [point] {}
    (1,-1) node [point] {}
    (-1,1) node [point] {}
    (1,1) node [point] {};

\draw [->, red]
    (0, 0) -- (0.35, 0.35);

\draw [->, red]
    (0, 0) -- (-0.35, 0.35);

\end{tikzpicture}
\qquad
\begin{tikzpicture}[bolli]

\draw (-1,0) -- (1,0) ;
\draw (0,-1) -- (0,1) ; 
\node[redpoint] at (0,0) {};

\draw [->, red]
    (0, 0) -- (0, 0.5);

\draw [->, red]
    (0, 0) -- (0.5, 0);

\end{tikzpicture}
\caption{Diffeological subspaces of $\bbR^2$ with non-elastic points marked in red, at which two tangent directions cannot be represented on the same plot.}
\label{fig:NonElastic}
\end{figure}

\subsection{The additional axioms}

So far we have the Axiom~(E1) that ensures that we have a fiberwise addtion on $TX$ and an inner derivative on differential forms. For the definition of the Lie bracket we need more structure. In particular, we need a natural morphism $\tau_X: T^2 X \to T^2 X$ that exchanges the order of differentiation when we apply the tangent functor twice. On a euclidean space $U \subset \bbR^n$, every tangent vector is represented by a path $\bbR \to U$ on some plot, so that a tangent vector on the manifold of tangent vectors is represented by a smooth path of smooth paths, which is the same as a smooth map $\bbR^2 \to U$. Exchanging the order of differentiation is achieved by exchanging the parameters, 
\begin{align*}
  \tau_{1 \leftrightarrow 2}: \bbR^2 
  &\longrightarrow \bbR^2
  \\
  (t_1, t_2) 
  &\longmapsto (t_2, t_1)
  \,,
\end{align*}
which descends by the commutative diagram
\begin{equation}
\label{eq:Intro04}
\begin{tikzcd}
C^\infty(\bbR^2, U)
\ar[r, "\tau_{1 \leftrightarrow 2}^*"] \ar[d]
&
C^\infty(\bbR^2, U)
\ar[d]
\\
T^2 U \ar[r, "\tau_U"']
&
T^2 U
\end{tikzcd}    
\end{equation}
to an endomorphism of $T^2 U$.

When we extend $\tau_U$ to diffeological spaces, the problem arises that the left Kan extension does not preserve the product of endofunctors, that is, the natural morphism
\begin{equation*}
  \theta^2_X: \Colim_{U \to X} T^2 U
  \longrightarrow
  T^2 X
\end{equation*}
is generally not an isomorphism. We could impose the condition that $\theta^2_X$ is an isomorphism, but this would be unnecessarily strong. It suffices to require the left Kan extension of $\tau_U$ to descend to a morphism $\tau_X: T^2 X \to T^2 X$. Since $\theta^2_X$ is a subduction for all $X$ (Proposition~\ref{prop:KanProdSubduction}), such a $\tau_X$ is unique. This condition can be expressed more intuitively in terms of the smooth families in the same way as for euclidean spaces. We can show that we can represent elements in $T^2 X$ by plots $\bbR^2 \to X$. More precisely, we have a subduction
\begin{equation*}
  \intHom(\bbR^2, X) \longrightarrow T^2 X
  \,,
\end{equation*}
where $\intHom$ denotes the inner hom of diffeological spaces, that is, the set of morphisms equipped with the functional diffeology. The second axiom can now be expressed in a way that is completely analogous to diagram~\eqref{eq:Intro04}.

\begin{AxiomE2}
There is a natural morphism $\tau_X: T^2 X \to T^2 X$, such that the diagram
\begin{equation*}
\begin{tikzcd}
\intHom(\bbR^2, X)
\ar[r, "\tau_{1 \leftrightarrow 2}^*"] \ar[d]
&
\intHom(\bbR^2, U)
\ar[d]
\\
T^2 X \ar[r, "\tau_X"']
&
T^2 X
\end{tikzcd}    
\end{equation*}
commutes.
\end{AxiomE2}

Next, consider the natural morphism $\lambda_X: TX \to T^2 X$ that maps $v \in TX$ to the vertical tangent vector on $TX$ represented by the path $t \mapsto tv$. On a smooth manifold, this morphism induces an isomorphism between every tangent space and the tangent space of the tangent space. For diffeologial vector spaces this can fail, as the following example shows.

\begin{Example}
\label{ex:CkDiffeologies}
Consider $\bbR^n$ equipped with $k$-times differentiable maps as plots. This is a diffeological vector space that we denote by $\bbR^n_{C^k}$. Its tangent diffeological space is given for $k > 0$ by
\begin{equation*}
  T\bbR^n_{C^k} \cong \bbR^n_{C^k} \times \bbR^n_{C^{k-1}}
  \,,
\end{equation*}
which shows that the vector space and its tangent fiber are not isomorphic. Assume that $k > 1$, so that we can apply the tangent functor twice. The vertical lift,
\begin{align*}
  \lambda_{\bbR^n_{\!C^k}}:
  \bbR^n_{C^k} \times \bbR^n_{C^{k-1}}
  &\longrightarrow
  \bbR^n_{C^k} \times \bbR^n_{C^{k-1}} 
  \times \bbR^n_{C^{k-1}} \times \bbR^n_{C^{k-2}}
  \\
  (x, v) &\longmapsto (x, 0, 0 , v)
  \,,
\end{align*}
is not a subduction.
\end{Example}

The definition of the Lie bracket in terms of the tangent structure yields a map from $X$ to the vertical subbundle of $T^2 X$ restricted to the zero section of $TX$. We have to be able to identify this bundle with $TX$ for the bracket to be again a vector field. This condition is not specific to diffeological spaces. A vector field on a $C^k$-manifold is a $C^k$-map. The commutator of two such vector fields is a $C^{k-1}$-map which is, therefore, not a vector field on the $C^k$-manifold. To exclude such phenomena we have to impose the following axiom:

\begin{AxiomE3}
The vertical lift $\lambda_X: TX \to T^2 X$ is an induction.
\end{AxiomE3}

There are two more axioms. For smooth manifolds the tangent functor commutes with pullbacks over submersions. This follows from the local standard form of submersions, which is proved using the implicit function theorem. Such a genuinely analytic result cannot hold for all diffeological spaces, which is why we need to impose the following axiom:

\begin{AxiomE4} 
The tangent functor commutes with fiber products of the tangent bundle, $T T_k X \cong T_k T X$.
\end{AxiomE4}

Finally, we want the diffeological spaces that satisfy our axioms to form a category. This requires the collection of diffeological spaces that satisfy the axioms to be closed under the functors $T_k$, which leads to the following axiom:

\begin{AxiomE5}
For every finite set of positive integers $k_1, \ldots, k_n$ the diffeological space $X' := T_{k_1} \cdots T_{k_n} X$ satisfies axioms (E1) through (E4).
\end{AxiomE5}

A diffeological space that satisfies Axioms (E1)-(E5) will be called \textbf{elastic}. If we drop Axiom (E5), then we still have a natural Cartan calculus on $X$. We call a diffeological space that satisfies Axioms (E1)-(E4) \textbf{weakly elastic}. The category of weakly elastic spaces is not closed under the functors $T_k$.

\begin{figure}
\centering
\begin{tikzpicture}[bolli]

\filldraw
    (0,0) rectangle (2,2);
    
\draw
    (0,0) node [point] {}
    (2,0) node [point] {}
    (0,2) node [point] {}
    (2,2) node [point] {};

\end{tikzpicture}
\qquad
\begin{tikzpicture}[bolli]

\filldraw[
    domain=-pi:pi,
    trig format=rad,
    samples=120,
    smooth]
    plot[parametric,
        mark=*,
        mark indices={5,25,45,65,85,105},
        mark size=1.5pt,
        mark options={color=black}]
    (
        {cos(\x) + 0.2 * sin(5 * \x)},
        {sin(\x) + 0.2 * cos(5 * \x)} );

\end{tikzpicture}
\qquad
\begin{tikzpicture}[bolli]

\filldraw
    (0,0) arc (180:0:1) -- 
    ++ (0,2) arc (360:180:1) -- cycle;

\draw
    (0,0) node [point] {}
    (2,0) node [point] {}
    (0,2) node [point] {}
    (2,2) node [point] {};

\end{tikzpicture}
\qquad
\begin{tikzpicture}[bolli]

\draw (0,0) -- (-1,0) ;
\draw (0,0) -- (0,1) ;
\draw (0,0) -- (1,-0.3) ;
\draw (0,0) -- (0.3,-1) ; 
\node[point] at (0,0) {};

\end{tikzpicture}

\caption{Elastic diffeological subspaces of $\bbR^2$. The tangent spaces are $0$ at the marked points, $\bbR$ at points on the black lines, and $\bbR^2$ at gray points in the interior.}
\label{fig:CornersCusps}
\end{figure}
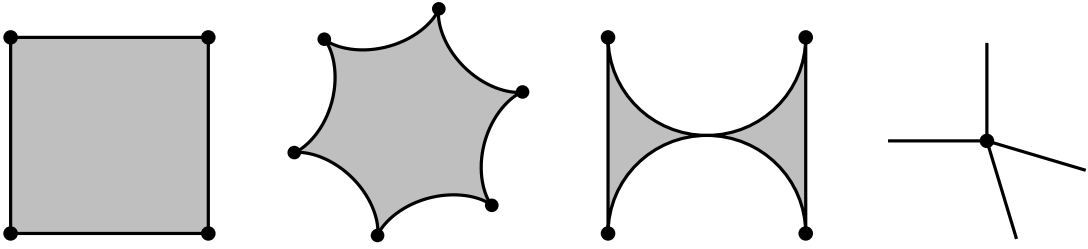

\subsection{Summary}

The main result about elastic diffeological spaces is the following: The left Kan extension of the tangent structure on euclidean spaces defines a tangent structure with scalar $\bbR$-mul\-ti\-pli\-ca\-tion on the category of elastic spaces (Theorem~\ref{thm:ElasticTanCat}). The proof of this statement, which is very long and technical, is carried out in detail in a much longer paper \cite{Blohmann:Elastic}. Here we will present an overview of the conceptual framework, the main properties, and important examples of elastic spaces.

In Section~\ref{sec:AbstractTangent} we review Rosick\'y's concept of abstract tangent structures. We also spell out the conditions for a compatible scalar multiplication, which is only mentioned briefly in the original paper \cite{Rosicky:1984}. For clarity, we spell out the tangent structure of euclidean spaces explicitly.

In Section~\ref{sec:LeftKanExtension} we state some new results about the left Kan extension from eulcidean spaces to diffeological spaces, that will be needed later. We use the definition of diffeological spaces as concrete sheaves on the site of euclidean spaces, which the best approach for the categorical constructions we will study. We then state, without proof, our results on the compatibility of the left Kan extension with products, coproducts, subductions, composition of endofunctors, and the $D$-topology of diffeological spaces, which are all needed for the proof of the main Theorem~\ref{thm:ElasticTanCat}.

Section~\ref{sec:Elastic} contains the formal definition of elastic spaces an the statement, without proof, of the main Theorem~\ref{thm:ElasticTanCat}. We discuss some alternative choices of axioms for which the theorem remains true. First, we observe that we can drop Axiom (E5) from the defintion of elastic spaces and still obtain a Cartan calculus on the space $X$. This category of weaker diffeological spaces will no longer be closed under the tangent functor and its fiber products, so that we no longer have a category with an abstract tangent structure in the sense of Rosick\'y. We can also slightly relax the Axiom (E1) of elasticity by requiring the the condition only holds for $k=2$. This weaker condition will be sufficient to prove Theorem~\ref{thm:ElasticTanCat}. As explained in the introduction, our choice of the stronger Axiom (E1) is motivated by our wish to obtain not just a tangent structure, but a full-fledged Cartan calculus. In an earlier version of the notion of elastic spaces presented in talks, Axioms~(E1), (E2), and~(E4) were replaced by a single condition, which later turned out to be unnecessarily strong. Finally, we state, without proof, that the category of elastic spaces is closed under restrictions to open subsets, coproducts, finite products, and retracts.

In Section~\ref{sec:Examples} we will give a number of examples for elastic diffeological spaces that show that, while the conditions of elasticity are quite strong, they still allow for an interesting range of applications. The first main result, Theorem~\ref{thm:ElasticGroups}, shows that a diffeological Lie group $G$ is elastic if and only if the natural map $\frakg \to T_0 \frakg$ from the vector space $\frakg = T_e G$ to its tangent space at $0$ is an induction. This is a surprisingly weak condition, which is not particular to diffeological spaces. For example, it is not satisfied by differentiable group structure on a $C^k$-manifold, $k < \infty$. The proof of Theorem~\ref{thm:ElasticGroups}, which is long and techincal, will be given in \cite{Blohmann:Elastic}. The second main result, Theorem~\ref{thm:FieldsElastic}, states that the diffeological space of sections of a smooth fiber bundle $F \to M$ is elastic. The proof of this theorem is again long and involved, so that it will be given in~\cite{Blohmann:Elastic}. More examples of elastic spaces in this sections are: manifolds with corners and cusps, diffeological vector spaces satisfying a mild extra condition, manifolds modelled on elastic vector spaces, mapping spaces $\intHom(X,A)$ to diffeological vector spaces satisfying $A \cong T_0 A$, the space of smooth $\bbR$-valued functions on a diffeological space, the mapping space of smooth manifolds.

\subsection{Outlook}

One of the motivations to develop the concept of elastic spaces came from classical field theory, where the basic strucure, the ``space'' of fields is the set of sections of a smooth fiber bundle $F \to M$ leaving it often unclear or implicit what ``space'' means mathematically. It is often observed that $\calF = \Gamma(M,F)$ is a Fr\'echet manifold, which is subsequently viewed as blanket license to treat $\calF$ as if it were an ordinary finite-dimensional smooth manifold. For example, it is often an implicit assumption that there is a natural differential bigraded algebra of smooth forms on $\calF \times M$ that restricts to the variational bicomplex on $J^\infty F$ \cite{DeligneFreed:1999}. In the same vein, in the study of symmetries, such as Noether's theorems or BV-theory, the constructions are often explained rigorously only for finite Lie groups acting on finite-dimensional manifolds and then generalized with a leap of faith to group-valued functions or diffeomorphisms acting on the spaces of fields. A closer analysis shows that only the diffeological structure that is being used. For example, a variation of a field is a smooth path of sections and a tangent vector to the space of solutions of the field equations (i.e.~a generalized Jacobi field) is represented by a smooth path of fields. All this suggests that there is a diffeological construction of a Cartan calculus on $\calF$. This approach is validated by Theorem~\ref{thm:FieldsElastic}.

Another motivation comes from geometric deformation theory. Conceptually, a deformation is a path in the moduli space of structures, such as the morphisms of an algebraic structure, riemannian metrics, or complex structures, all of which are equipped with a natural functional diffeology. This suggests that the geometric moduli spaces can be conceptualized by (higher) stacks that are presented by (higher) diffeological groupoids. The infinitesimal deformation theory should then be given by the fibers of the tangent bundle of the moduli space, which should be presented by the corresponding (higher) Lie algebroid. In order to define this procedure rigorously, a Lie theory for diffeological groupoids has to be developed. This has lead us to the conclusion that we need a tangent structure on the diffeological spaces of the groupoid. It is encouraging that for diffeological Lie groups the condition of elasticity is surprisingly weak. Lie theory for diffeological groupoids and their application to geometric deformation theory is work in progress.

\section{Abstract tangent structures}
\label{sec:AbstractTangent}

It is fairly straightforward to generalize the de Rham complex $\Omega(M)$, which is a contravariant functor from smooth manifolds to differential graded algebras, to other categories. While vector fields are in some sense dual to differential forms, their generalization is a much more difficult problem. An immediate obstacle is that the map $M \to \calX(M)$ that sends a manifold to its Lie algebra of vector fields is not a functor. It is the tangent bundle $TM \to M$ that is functorial in $M$ and, therefore, lends itself easily to generalizations. Given a generalized tangent bundle $\pi_X: TX \to X$ in some category, the vector fields are naturally defined as the sections of the morphism $\pi_X$. The obvious question is now the following:

\begin{Question}
Given a morphism $TX \to X$ in some catgory, what is the natural structure needed to equip its space of sections with the structure of a Lie algebra?
\end{Question}

This question turns out to be quite involved. A vector field on a smooth manifold can be identified with a derivation of its ring of smooth functions $C^\infty(M)$, which is closed under the commutator bracket of the ambient ring of endomorphisms of $C^\infty(M)$. However, $M \mapsto \Der(C^\infty(M))$ is still no functor. Moreover, in a generalized setting, the identification of sections of $TX \to X$ with derivations on some structure ring on $X$ seems to be an overly strong requirement that is extraneous to differential geometric considerations. So how can we define the Lie bracket of vector fields on $M$ directly in terms of the tangent functor using a categorical approach that lends itself to generalizations?

This issue has been solved by Rosick\'y in \cite{Rosicky:1984}. In a first step, we observe that on manifolds the vector space structure on vector fields is induced by the vector space structure on the tangent fibers. To allow for categories that do not contain the real numbers as object, we relax the structure of $\bbR$-vector space to that of an abelian group. The structure we then need on a generalized tangent bundle $TX \to X$ is the natural transformatios of addition $+_X: TX \times_X TX \to TX$ and a zero section $0_X: X \to TX$ that equip $TX \to X$ with the structure of an abelian group over $X$.

For the definition of the Lie bracket, we start from the coordinate formula
\begin{equation*}
  \Bigl[
  v^i \frac{\partial}{\partial x^i}
  w^j \frac{\partial}{\partial x^j}
  \Bigr]
  = \Bigl(v^i \frac{\partial w^j}{\partial x^i}
  - w^i \frac{\partial v^j}{\partial x^i} \Bigr)
    \frac{\partial}{\partial x^j}
\end{equation*}
for vector fields on $\bbR^n$. The right hand side is given by the derivation of $w$ with respect to $v$ minus the derivation of $v$ with respect to $w$. In order to generalize this formula we must make sense of the differentiation of one vector field with respect to another and the subtraction of the two terms.

The derivation of $w: X \to TX$ with respect to $v: X \to TX$ is given by the composition
\begin{equation*}
  X \xrightarrow{~v~} 
  TX \xrightarrow{~Tw~} 
  T^2 X
  \,.
\end{equation*}
However, we cannot yet subtract $Tw(v_x)$ and $Tv(w_x)$ since the basepoint of $Tw(v_x)$ in $\pi_{TX}: T(TX) \to TX$ is $w_x$, whereas that of $Tv(w_x)$ is $v_x$. We first have to exchange the order of differentiation of the twofold tangent bundle. That is, we need a natural transformation $\tau_X: T^2 X \to T^2 X$ that satisfies $\tau_X \circ \pi_{TX} = T\pi_X$. Then the basepoint of $\tau_X(Tv(w_x))$ is also $w_x$, so that we can take the difference
\begin{equation}
\label{eq:TanIntroBrack}
  Tw(v_x) - \tau_X\bigl( Tv(w_x) \bigr)
  = \bigl( +_{TX} \circ 
  ( Tw \circ v, - \tau_X \circ Tv \circ w) 
  \bigr)_x
  \,.
\end{equation}
The result lies in the vertical tangent bundle of $T^2 X \to TX$, that is, the kernel of $T\pi_{TX}$. In the last step we have to be able to identify at every point $w_x \in TX$ the vertical tangent space with $T_x X$ itself. 

In a smooth manifold, there is a morphism $\lambda_{2,M}: TM \times_M TM \to T^2M$ that maps the pair $(w_x, u_x)$ to the vertical tangent vector in $T^2 M$ that is represented by the path $t \mapsto w_x + t u_x$. This map induces an isomorphism $TM \times_M TM \cong \ker T\pi_{TM}$. In the generalized setting this structure is promoted to an axiom: We require a natural morphism $\lambda_{2,X}:  TX \times_X TX \to T^2 X$ that induces an isomorphism $TX \times_X TX \cong \ker T\pi_{TX}$. Using this isomorphism, the expression~\eqref{eq:TanIntroBrack} can be viewed as an element in $TX \times_X TX$ that can be projected onto the second factor which produces an element in $TX$, which is the value of the bracket $[v,w]$ at $x$.

For this bracket to satisfy the Jacobi identity, a number of compatibility relations and properties of the various structures, bundle projection, zero section, fiberwise addition, exchange of the order of differentiation, and the vertical lift have to be required \cite{Rosicky:1984}. Rosicky's axiomatization of all this structure is the basis of this paper.

\subsection{Preliminary remarks on terminology and notation}

\begin{Terminology}
\label{term:WibbleBundle}
Let Wibble be an algebraic theory. Let $X$ be an object in a category $\calC$ such that the overcategory $\calC \Comma X$ has all finite products (i.e.~pullbacks over $X$). A Wibble object in $\calC\Comma X$ will be called a \textbf{bundle of Wibbles over $X$}.
\end{Terminology}

In this paper Wibble will be one of: monoid, group, abelian group, module, $\bbR$-vector space (for categories containing $\bbR$ as an object). If $W \to X$ is a bundle of Wibbles and if the pullback $W_x = * \times_X W$ over a point $x: * \to X$ exists in $\calC$, then $W_x$ is a Wibble object in $\calC$. In other words, every fiber of a bundle of Wibbles is a Wibble object in $\calC$, which justifies the terminology. Note, that the notion of bundle of Wibbles does not make any assumptions on local trivializations, whatsoever. So a bundle of vector spaces over a manifold $M$ is more general than a vector bundle over $M$.

\begin{Remark}
The main purpose of Terminology~\ref{term:WibbleBundle} is to unify (for the purpose of this paper) the varied terminology found in the literature and to use a term that is self-explanatory for a category theorist. In \cite[p.~1]{Rosicky:1984} a bundle of (abelian) groups over an endofunctor $F: \calC \to \calC$ is called an ``natural (abelian) group bundle over $F$''. A bundle of vector spaces over a diffeological space $X$ is called a ``regular vector bundle'' in \cite{Vincent:2008}, a ``diffeological vector space over $X$'' in \cite{ChristensenWu:2016}, and a ``diffeological vector pseudo-bundle'' in \cite{Pervova:2016}.    
\end{Remark}

\begin{Notation}
\label{notation:Functors}
We will follow \cite{Rosicky:1984} for the notation of the compositions of functors and natural transformations. The composition of functors $G: \calA \to \calB$ and $F:\calB \to \calC$ will be denoted by juxtaposition $FG: \calA \to \calC$. Therefore, the horizontal composition of natural transformations $\alpha: F \to F'$ and $\beta: G \to G'$ (the Godement product) will also be denoted by juxtaposition $\alpha\beta: FG \to F'G'$. Its components are given by the following commutative diagram:
\begin{equation*}
\begin{tikzcd}[column sep=large]
FG(A) \ar[d, "\alpha_{G(A)}"'] \ar[r, "F(\beta_A)"] 
\ar[dr, "(\alpha\beta)_A"] & 
FG'(A) \ar[d, "\alpha_{G'(A)}"]
\\
F'G(A) \ar[r, "F'(\beta_A)"'] &
F'G'(A)
\end{tikzcd}    
\end{equation*}
The identity natural transformation $F \to F$ will be denoted by $F$, so that
\begin{align*}
  (F\beta)_A &= F(\beta_A)
  \\
  (\alpha G)_A &= \alpha_{G(A)}
  \,.
\end{align*}
The vertical composition of $\alpha$ with a natural transformation $\alpha': F' \to F''$ will be denoted by $\alpha' \circ \alpha: F \to F''$. Its components are given by $(\alpha' \circ \alpha)_A = \alpha'_A \circ \alpha_A$. The monoidal category of endofunctors will be denoted by $\End(\calC)$.
\end{Notation}

\subsection{Rosick\'{y}'s axioms}
\label{sec:RosickysAxioms}

In~\cite{Rosicky:1984}, Rosick\'{y} introduced the notion of \textbf{abstract tangent functor}, which captures much of the categorical structure of the tangent functor of manifolds. The following notion is implicit in Rosick\'y's definition:



\begin{Definition}
\label{def:SymStruc}
Let $F: \calC \to \calC$ be a functor and $\tau: F^2 \to F^2$ a natural transformation. Let $\tau_{12} := \tau\,F$ and $\tau_{23}:= F\,\tau$ be the two trivial extensions of $\tau$ to natural transformations $F^3 \to F^3$. We call $\tau$ a \textbf{braiding on $F$} if it satisfies the braid relations $\tau_{12} \circ \tau_{23} \circ \tau_{12} = \tau_{23} \circ \tau_{12} \circ \tau_{23}$. A braiding $\tau$ is called a \textbf{symmetric structure on $F$} if it satisfies $\tau\circ \tau = F^2$.
\end{Definition}

\begin{Remark}
A symmetric structure on $F$ defines an action of the symmetric group $S_n$ on $F^n$.
\end{Remark}

A bundle of groups over $X$ consists of a morphism $\pi: A \to X$, the bundle projection, together with the morphisms $0: X \to A$ and $+: A \times_X A \to A$ of the group structure. Let $\pi': A' \to X'$, $0': X' \to A'$, $+': A' \times_{X'} A' \to A'$ be another bundle of groups. A \textbf{morphism of bundles} is a commutative diagram
\begin{equation}
\label{eq:MorphBundles1}
\begin{tikzcd}
A \ar[r, "\phi"] \ar[d, "\pi"'] & A' \ar[d, "\pi'"]
\\
X \ar[r, "\psi"'] & X' 
\end{tikzcd}    
\end{equation}
There is a unique morphism $\phi \times_\psi \phi: A \times_X A \to A' \times_{X'} A'$ that makes the following diagram commutative:
\begin{equation}
\label{eq:MorphBundles2}
\begin{tikzcd}
A \times_X A \ar[r, "\phi \times_\psi \phi" ] \ar[d] 
&
A \times_X A \ar[d]
\\
A \times A \ar[r, "\phi \times \phi"']
& 
A' \times A'
\end{tikzcd}  
\end{equation}
The pair $(\phi,\psi)$ is a \textbf{morphism of bundles of groups} if the diagram
\begin{equation}
\label{eq:MorphBundles3}
\begin{tikzcd}
A \times_X A \ar[r, "\phi \times_\psi \phi"] 
\ar[d, "+"'] & A' \times_X A' \ar[d, "+'"]
\\
A \ar[r, "\phi"'] & A'
\end{tikzcd}    
\end{equation}
commutes. An endofunctor $F$ \textbf{preserves the fiber product} if the natural morphism of bundles over $FX$,
\begin{equation}
\label{eq:MorphBundles4}
  \nu_{k,X}:
  F(A \times_X^{\pi,\pi} \ldots \times_X^{\pi,\pi} A) \longrightarrow
  FA \times_{FX}^{F\pi, F\pi} \ldots \times_{FX}^{F\pi, F\pi} FA
  \,,
\end{equation}
where both sides have the same number $k$ of factors, is an isomorphism for all $k$.

\begin{Definition}[Sec.~2 in\cite{Rosicky:1984}, Def.~2.3 in  \cite{CockettCruttwell:2014}]
\label{def:TangentStructure}
A \textbf{tangent structure} of a category $\calC$ consists of a functor $T: \calC \to \calC$ together with natural transformations $\pi: T \to \Id$, $0: 1 \to T$, $+: T_2 \to T$, $\lambda: T \to T^2$, and $\tau: T^2 \to T^2$, such that the following axioms hold:
\begin{itemize}

\item \textbf{Fiber products:} 
The pullbacks
\begin{equation}
\label{eq:TanFun1}
  T_k := \underbrace{T \times_\Id T \times_\Id \ldots \times_\Id T}_{k \text{ factors}}
\end{equation}
over $T \stackrel{\pi}{\to} 1$ exist for all $k \geq 1$, are pointwise, and preserved by $T$.

\item \textbf{Bundle of abelian groups:}
$T \stackrel{\pi}{\to} \Id$ with neutral element $0$ and addition $+$ is a bundle of abelian groups over $1$ (Terminology~\ref{term:WibbleBundle}).

\item \textbf{Symmetric structure:}
$\tau: T^2 \to T^2$ is a symmetric structure on $T$ (Definition~\ref{def:SymStruc}). Moreover, $\tau$ is a morphism of bundles of groups. That is, the diagrams
\begin{equation}
\label{eq:TanFun2}
\begin{tikzcd}[column sep=1em]
T^2 \ar[rr, "\tau"] \ar[dr, "T \pi"'] && T^2 \ar[dl, "\pi T"]
\\
& T &
\end{tikzcd}    
\end{equation}
and
\begin{equation}
\label{eq:TanFun2b}
\begin{tikzcd}
T T_2 \ar[r, "\nu_2"] \ar[d, "T+"']
&
T^2 \times_T^{T\pi, T\pi} T^2  
\ar[r, "\tau \times_T \tau"] 
&
T^2 T
\ar[d, "+T"] 
\\
T^2 \ar[rr, "\tau"'] 
&& 
T^2
\end{tikzcd}    
\end{equation}
commute, where $\nu_2$ is morphism~\eqref{eq:MorphBundles4} for $A=TX \stackrel{\pi}{\to} X$, $F=T$, and $k=2$.

\item \textbf{Vertical lift:} 
The diagrams
\begin{equation}
\label{eq:TanFun3}
\begin{tikzcd}
T \ar[r, "\lambda"] \ar[d, "\pi"'] & T^2 \ar[d, "\pi T"]
\\
\Id \ar[r, "0"'] & T
\end{tikzcd}    
\qquad\qquad
\begin{tikzcd}
T \ar[r, "\lambda"] \ar[d, "\lambda"'] & 
T^2 \ar[d, "\lambda T"]
\\
T^2 \ar[r, "T \lambda"'] & T^3
\end{tikzcd}    
\end{equation}
commute. Moreover, the first diagram is a morphism of bundles of groups, that is $(+T) \circ (\lambda \times_0 \lambda) = \lambda \circ +$.

\item \textbf{Compatibility of vertical lift and symmetric structure:}
The diagrams
\begin{equation}
\label{eq:TanFun5}
\begin{tikzcd}[column sep=tiny]
& T \ar[dl, "\lambda"'] \ar[dr, "\lambda"] &
\\
T^2 \ar[rr, "\tau"'] & &
T^2
\end{tikzcd}    
\qquad\qquad
\begin{tikzcd}
T^2 \ar[r, "T\lambda"] \ar[d, "\tau"'] & 
T^3 \ar[r, "\tau T"] &
T^3 \ar[d, "T\tau"]
\\
T^2 \ar[rr, "\lambda T"'] & &
T^3
\end{tikzcd}    
\end{equation}
commute.

\item \textbf{The vertical lift is a kernel:} The diagram
\begin{equation}
\label{eq:TanFun4}
\begin{tikzcd}
T \ar[r, "\lambda"] \ar[d, "\pi"'] & T^2 \ar[d, "{(\pi T, T\pi)}"]
\\
\Id \ar[r, "{0 \times_1 0}"'] & T_2
\end{tikzcd}    
\end{equation}
is a pointwise pullback. 

\end{itemize}
\end{Definition}

\begin{Terminology}
In \cite{CockettCruttwell:2014} and subsequent work, Rosick\'{y}'s original condition that $T \to \Id$ be a bundle of abelian groups was relaxed to a bundle of abelian monoids. In this terminology, Rosicky's stronger notion is called a tangent structure with negatives. All tangent structures in this paper will be with negatives.
\end{Terminology}

\begin{Remark}
Diagram~\eqref{eq:TanFun4} is a pointwise pullback if and only if
\begin{equation*}
\begin{tikzcd}[column sep=large]
T \ar[r, "\lambda"] &
T^2 
\ar[r, "\pi T", shift left=4] 
\ar[r, "T\pi", shift left=0] 
\ar[r, "0 \circ \pi \circ \pi T"', shift left=-3] 
&
T
\end{tikzcd}
\end{equation*}
is a pointwise triple equalizer. This condition is the original axiom in \cite{Rosicky:1984}. 
\end{Remark}

\begin{Remark}
The vertical lift can be extended by the additive bundle structure to the map
\begin{equation}
\label{eq:VertLiftExt}
  \lambda_2:
  T_2 \xrightarrow{~T0 \times_0 \lambda~}
  T_2 T
  \xrightarrow{~+T~}
  T^2
  \xrightarrow{~\tau~}
  T^2
  \,.
\end{equation}
It was shown in \cite[Lem.~3.10]{CockettCruttwell:2014}, assuming all other axioms of a tangent structure (with negatives), that axiom~\eqref{eq:TanFun4} is satisfied if and only if
\begin{equation*}
\begin{tikzcd}
T_2 \ar[r, "\lambda_2"] \ar[d, "\pi \circ \pr_1"'] & 
T^2 \ar[d, "{T\pi}"]
\\
\Id \ar[r, "0"'] & T
\end{tikzcd}    
\end{equation*}
is a pointwise pullback.

\end{Remark}

\subsection{Scalar multiplication}

Let $R$ be a ring object in the category $\calC$. This gives rise to an endofunctor $R \times \Id: \calC \to \calC$, $X \mapsto R \times X$, which is equipped with the projection $\pr_\Id: R \times \Id \to \Id$. The ring structure of $R$ equips $R \times \Id \to \Id$ with the structure of a ring internal to endofunctors over $\Id$. Let $\pi: T \to \Id$ be an abelian group object in the category of endofunctors over $\Id$. An $(R \times \Id \to \Id)$-module structure on $T \to \Id$ is given explicitly by a natural transformation
\begin{equation}
  \kappa_X: R \times TX 
  \longrightarrow TX
  \,,
\end{equation} 
such that the following diagrams commute for all $X \in \calC$:
\begin{itemize}

\item[(i)] Morphism of bundles:
\begin{equation*}
\begin{tikzcd}[column sep={tiny}]
R \times TX \ar[rr, "\kappa_X"] \ar[dr, "\pi_X \circ \pr_2"'] &&
TX \ar[dl, "\pi_X"]
\\
& X &
\end{tikzcd}
\end{equation*}

\item[(ii)] Associativity:
\begin{equation*}
\begin{tikzcd}[column sep={large}]
R \times R \times TX \ar[r, "\id_R \times \kappa_X"] 
\ar[d, "\cdot \times \id_{\bbR \times TX}"'] &
R \times TX \ar[d, "\kappa_X"]
\\
R \times TX \ar[r, "\kappa_X"'] & TX
\end{tikzcd}
\end{equation*}

\item[(iii)] Unitality:
\begin{equation*}
\begin{tikzcd}
\{1\} \times TX \ar[r, hook] \ar[dr, "\cong"'] & 
R \times TX \ar[d, "\kappa_X"]
\\
& TX
\end{tikzcd}    
\end{equation*}

\item[(iv)] Linearity in $R$:
\begin{equation*}
\begin{tikzcd}[column sep={large}]
R \times R \times TX
\ar[r, "+ \times \id_{TX}"] 
\ar[d, "\id_{R^2} \times \Delta_{TX}"'] &
R \times TX \ar[dd, "\kappa_X"]
\\
(R \times TX) \times_X (R \times TX)
\ar[d, "\kappa_X \times_X \kappa_X"']
&
\\
TX \times_X TX 
\ar[r, "+_X"'] 
&
TX
\end{tikzcd}
\end{equation*}
Here $\Delta_{TX}:  TX \to TX \times_X TX$ is the diagonal morphism and the factors of the codomain are reordered.

\item[(v)] Linearity in $TX$:
\begin{equation*}
\begin{tikzcd}[column sep={large}]
R \times TX \times_X TX
\ar[r, "\id \times +_X"] 
\ar[d, "\Delta_R \times \id_{T_2 X}"'] &
R \times TX \ar[dd, "\kappa_X"]
\\
(R \times TX) \times_X (R \times TX)
\ar[d, "\kappa_X \times_X \kappa_X"']
&
\\
TX \times_X TX 
\ar[r, "+_X"'] 
&
TX
\end{tikzcd}
\end{equation*}
Here $\Delta_R:  R \to R \times R$ is the diagonal morphisms and the factors of the codomain are reordered.
\end{itemize}

We will call this structure more succinctly an \textbf{$R$-module structure} on $T \to \Id$ and $T \to \Id$ a \textbf{bundle of $R$-modules} (Terminology~\ref{term:WibbleBundle}). If $T$ is part of a tangent structure on $\calC$, then we also have to require the compatibility with the symmetric structure and the vertical lift.

\begin{Definition}
\label{def:RScalar}
Let $R$ be a ring internal to a category $\calC$ with a tangent structure. An $R$-module structure $\kappa_X: R \times TX \to TX$ will be called a \textbf{scalar multiplication} of the tangent structure if the following diagrams commute for all $X \in \calC$:
\begin{itemize}

\item[(vi)] Compatibility with the symmetric structure:
\begin{equation*}
\begin{tikzcd}[column sep={tiny}]
R \times T^2 X \ar[rr, "\id_R \times \tau_X"] \ar[dr, "\kappa_{TX}"'] &&
R \times T^2 X \ar[dl, "T\kappa_X"]
\\
& T^2 X &
\end{tikzcd}
\end{equation*}

\item[(vii)] Compatibilty with the vertical lift:
\begin{equation*}
\begin{tikzcd}[column sep={large}]
R \times TX 
\ar[r, "\id_R \times \lambda_X"] 
\ar[d, "\kappa_X"'] &
R \times T^2 X \ar[d, "\kappa_{TX}"]
\\
TX 
\ar[r, "\lambda_X"'] 
&
T^2 X
\end{tikzcd}
\end{equation*}

\end{itemize}
\end{Definition}

The tangent structures we consider here will all be equipped with an $\bbR$-scalar multiplication.

\begin{Remark}
As is the case for any module structure, the commutative diagram~(v) implies that the scalar multiplication by $0 \in R$ sends $TX$ to the zero section, that is, the diagram
\begin{equation*}
\begin{tikzcd}
\{0\} \times TX \ar[r, hook] \ar[d, "\pi_X \circ \pr_2"'] & 
R \times TX \ar[d, "\kappa_X"]
\\
X \ar[r, "0_X"'] & TX
\end{tikzcd}    
\end{equation*}
is commutative. If $\kappa$ is such that this diagram and diagrams (i)-(iii) are commutative, then $\kappa$ will be called an \textbf{$R$-cone} structure and $T \to 1$ a \textbf{bundle of $R$-cones}.
\end{Remark}

\subsection{The Lie bracket of vector fields}
\label{sec:LieBracket}

\begin{Definition}
Let $\calC$ be a category with a tangent structure. A \textbf{vector field} on $X \in \calC$ is a section of $\pi_X: TX \to X$.
\end{Definition}

The bracket of two vector fields $v,w: X \to TX$ is defined as follows. The composition of $v$ and $Tw: TX \to T^2 X$ satisfies
\begin{equation*}
\begin{split}
  \pi_{TX} \circ Tw \circ v 
  &= w \circ \pi_X \circ v
  = w \circ \id_X
  \\
  &= w
  \,.
\end{split}
\end{equation*}
When we exchange $v$ and $w$, we have $\pi_X \circ Tv \circ w = v$. In order to be able to subtract the two terms in the fiber product $T^2 X \times_{TX} T^2 X$, we have two apply the symmetric structure on $T^2$, so that we obtain
\begin{equation*}
\begin{split}
  \pi_{TX} \circ \tau_X \circ Tv \circ w 
  &= T\pi_X \circ Tv \circ w
  = T\id_X \circ w
  = \id_{TX} \circ w
  \\
  &= w
  \,.
\end{split}
\end{equation*}
This shows that $Tw \circ v$ and $\tau_X \circ Tv \circ w$ project to the same fiber of $\pi_{TX}: T^2 X \to TX$, so that we can take the difference
\begin{equation*}
  \delta(v,w)(x)
  := 
  (Tw \circ v)(x) - (\tau_X \circ Tv \circ w)(x)
  \,,
\end{equation*}
where the minus denotes the difference in the bundle of abelian groups $\pi_{TX}: T^2 X \to TX$. We have
\begin{equation*}
  \pi_{TX} \circ \delta(v,w) = 0 = T\pi_X \circ \delta(v,w)
  \,,
\end{equation*}
so that the map $\delta(v,w): X \to T^2 X$ takes values in the kernel of $T\pi_X: T^2 X \to TX$, which is isomorphic to $TX \times_X TX$. By projecting on the second factor we thus obtain the vector field $[v,w]: X \to TX$. This construction can be summarized by the following commutative diagram:
\begin{equation}
\label{eq:BracketDef}
\begin{tikzcd}
X 
\ar[r, "\Delta_X"] 
\ar[d, "{[v,w]}"']
\ar[dd, "{\exists!}", dashed, bend left=45]
\ar[ddrr, "\exists!", dashed]
&
X \times X \ar[r, "v \times w"]
&
TX \times TX \ar[r, "Tw \times Tv"]
&
T^2 X \times T^2 X
\ar[dd, "\id_X \times \tau_X"]
\\
TX &&&
\\
TX \times_X TX
\ar[dr, phantom, "\lrcorner", very near start]
\ar[u, "\pr_2"]
\ar[r, "\lambda_{2,X}"] 
\ar[d, "\pi_X"']
&
T^2 X 
\ar[d, "{T\pi_X}"]
&
T^2 X \times_{TX} T^2 X
\ar[dr, phantom, "\lrcorner", very near start]
\ar[l, "-_{TX}"']
\ar[r, hookrightarrow]
\ar[d]
&
T^2 X \times T^2 X
\ar[d, "\pi_{TX} \times \pi_{TX}"]
\\
X
\ar[r, "{0_X}"']
&
TX
&
TX
\ar[r, "\Delta_{TX}"']
&
TX \times TX
\end{tikzcd}
\end{equation}
This shows that all of the tangent structure is needed for the definition of the bracket of vector fields. It was announced in \cite{Rosicky:1984} and proved in \cite{CockettCruttwell:2015} with the input of Rosick\'y that $[v,w]$ satisfies the Jacobi relation.

The set of vector fields $\Gamma(X, TX)$ has the structure of an abelian group with addition 
\begin{equation*}
  v + w := +_X \circ (v \times w) \circ \Delta_X
  \,,
\end{equation*}
where $\Delta_X: X \to X \times X$ is the diagonal morphism. When the tangent structure has a scalar multiplication by $R$, then $\Gamma(X, TX)$ is a module over the ring $\calC(X,R)$ of $R$-valued functions, given by 
\begin{equation*}
  \kappa(f,v) = \kappa_X \circ (f \times v) \circ \Delta_X
  \,.
\end{equation*}

\subsection{The tangent structure of euclidean spaces}
\label{sec:TangentEuclidean}

The eponymous example for tangent structures is the tangent functor of open subsets of real vector spaces, which is the local model for the tangent functor of smooth manifolds. Let $\Eucl$ denote the category which has open subsets of $\bbR^n$, $n\geq 0$ as objects and smooth maps as morphisms. $\Eucl$ will be called the category of \textbf{euclidean spaces}. Its tangent functor will be denoted by
\begin{equation*}
  \eFun{T}: \Eucl \longrightarrow \Eucl
  \,.
\end{equation*}

On an open subset $U \subset \bbR^n$, the functors that appear in the definition~\ref{def:TangentStructure} of a tangent category are given explicitly by
\begin{equation*}
\begin{aligned}
  \eFun{T} U &= U \times \bbR^n
  \\
  \eFun{T}^2 U &= U \times \bbR^n \times \bbR^n \times \bbR^n
  \\
  \eFun{T}^k U &= U \times (\bbR^n)^{2^k-1}
  \\
  \eFun{T}_2 U &= U \times \bbR^n \times \bbR^n
  \\
  \eFun{T}_k U &= U \times (\bbR^n)^k
  \,.
\end{aligned}
\end{equation*}
On a smooth map $f: U \to V \subset \bbR^m$ the functors are given by
\begin{align}
  \eFun{T} f: (u,u_0^i) &\longmapsto
  \Bigl( f(u), \frac{\partial f^a}{\partial x^i} u_0^i \Bigr)
  \label{eq:T1f}\\
  \eFun{T}^2 f: (u, u_0^i, u_1^i, u_{01}^i) &\longmapsto
  \Bigl( f(u), 
  \frac{\partial f^a}{\partial x^i} u_0^i , 
  \frac{\partial f^a}{\partial x^i} u_1^i ,
  \frac{\partial f^a}{\partial x^i} u_{01}^i +
  \frac{\partial^2 f^a}{\partial x^i \partial x^j} u_0^i u_1^j 
  \Bigr)
  \label{eq:T2f}\\
  \eFun{T}_2 f: (u, u_0^i, u_1^i) &\longmapsto
  \Bigl( 
  f(u), 
  \frac{\partial f^a}{\partial x^i} u_0^i ,
  \frac{\partial f^a}{\partial x^i} u_1^i 
  \Bigr)
  \,. \notag
\end{align}
The formulas for $\eFun{T}^k$ and $\eFun{T}_k$ are analogous. The natural transformations of the tangent category structure are given by
\begin{align*}
  \eFun{\pi}_U : (u,u_0) &\longmapsto u
  \\
  \eFun{0}_U : u &\longmapsto (u, 0)
  \\
  \eFun{+}_U : (u, u_0, v_0) &\longmapsto (u, u_0 + v_0)
  \\
  \eFun{\lambda}_U : (u,u_0) &\longmapsto (u,0,0,u_0)
  \\
  \eFun{\tau}_U : (u, u_0, u_1, u_{01} ) &\longmapsto (u, u_1, u_0, u_{01} )
  \,.
\end{align*}
The commutativity of $\eFun{T}_2$ and $\eFun{T}$ is given by the isomorphism
\begin{equation*}
\begin{aligned}
  \eFun{T}(\eFun{T}_2 U) &\longrightarrow \eFun{T}_2(\eFun{T} U)
  \\
  \bigl( (u, u_0, v_0), (u_1, u_{01}, v_{01}) \bigr)
  &\longmapsto
  \bigl( ( u, u_1), (u_0, u_{01}), (v_0, v_{01}) \bigr)
\end{aligned}
\end{equation*}
The bundle projection extends to $\eFun{T}^2$ as
\begin{align*}
  (\eFun{\pi}\eFun{T})_U = \eFun{\pi}_{TU}: (u, u_0, u_1, u_{01}) &\longmapsto (u, u_0)
  \\
  (\eFun{T}\eFun{\pi})_U = \eFun{T}\eFun{\pi}_U: (u, u_0, u_1, u_{01}) &\longmapsto (u, u_1)
  \,.
\end{align*}
The other natural transformations that appear in the definition, $\eFun{+}\eFun{T}$, $\eFun{T}\eFun{+}$, $\eFun{\lambda}\eFun{T}$, $\eFun{T}\eFun{\lambda}$, $\eFun{\tau}\eFun{T}$, and $\eFun{T}\eFun{\tau}$, are obtained in a similar way. The extension~\eqref{eq:VertLiftExt} of the vertical lift is given by
\begin{equation*}
  \eFun{\lambda}_2: (u, u_0, v_0) \longmapsto (u, u_0, 0,  v_0)
  \,.
\end{equation*}
The following propositions can be checked by explicit elementary calculation:

\begin{Proposition}
\label{prop:EuclTangent}
$\Eucl$ with the tangent functor $\eFun{T}$ and the natural transformations $\eFun{\pi}$, $\eFun{0}$, $\eFun{+}$, $\eFun{\tau}$, and $\eFun{\lambda}$ is a tangent structure on $\Eucl$.
\end{Proposition}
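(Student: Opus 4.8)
The plan is to verify each clause of Definition~\ref{def:TangentStructure} directly, by substituting the explicit coordinate formulas recorded above for the functors $\eFun{T}^k$, $\eFun{T}_k$ and the natural transformations $\eFun{\pi},\eFun{0},\eFun{+},\eFun{\lambda},\eFun{\tau}$. The overwhelming majority of the axioms collapse to elementary bookkeeping of the $\bbR^n$-coordinates, so the real content is to isolate the one place where an analytic fact enters. First I would dispatch the fiber-product clause~\eqref{eq:TanFun1}: for open $U\subset\bbR^n$ the set $\eFun{T}_k U = U\times(\bbR^n)^k$ is again open in a euclidean space, hence an object of $\Eucl$, and one checks from the formula for $\eFun{\pi}_U$ that it is the genuine pullback in $\Eucl$ of $k$ copies of $\eFun{\pi}_U:\eFun{T}U\to U$, which makes it pointwise. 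Preservation by $\eFun{T}$ is the assertion that the comparison map $\nu_k$ of~\eqref{eq:MorphBundles4} is an isomorphism, and for $k=2$ this is literally the coordinate rearrangement $\eFun{T}(\eFun{T}_2 U)\to\eFun{T}_2(\eFun{T}U)$ written out above; general $k$ is identical. The bundle-of-abelian-groups clause is immediate, since $\eFun{+}_U$ and $\eFun{0}_U$ are just the additive structure of the fibre $\bbR^n$, with fibrewise negation supplying inverses.

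For the symmetric-structure clause I would note that $\eFun{\tau}_U$ merely transposes the two slots $u_0\leftrightarrow u_1$, so $\eFun{\tau}\circ\eFun{\tau}=\eFun{T}^2$ is immediate; since $\eFun{\tau}_U$ is fibrewise linear its tangent map $\eFun{T}\eFun{\tau}_U$ carries no second-order correction, so $\eFun{\tau}\eFun{T}$ and $\eFun{T}\eFun{\tau}$ act as honest adjacent transpositions of the seven $\bbR^n$-slots of $\eFun{T}^3 U$, and the braid relation of Definition~\ref{def:SymStruc} reduces to the $S_3$ relation $s_1 s_2 s_1 = s_2 s_1 s_2$. The two morphism-of-bundles diagrams~\eqref{eq:TanFun2} and~\eqref{eq:TanFun2b} then reduce to comparing which coordinate is projected or summed, which the formulas for $\eFun{\pi}\eFun{T}$, $\eFun{T}\eFun{\pi}$ and $\eFun{+}$ settle at once.

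The genuinely non-formal point, and the one step I expect to be the crux, is the \emph{naturality} of $\eFun{\tau}$. Comparing $\eFun{\tau}_V\circ\eFun{T}^2 f$ with $\eFun{T}^2 f\circ\eFun{\tau}_U$ via~\eqref{eq:T2f}, the first three components agree trivially, while the last components are $\tfrac{\partial f^a}{\partial x^i}u_{01}^i + \tfrac{\partial^2 f^a}{\partial x^i\partial x^j}u_0^i u_1^j$ and $\tfrac{\partial f^a}{\partial x^i}u_{01}^i + \tfrac{\partial^2 f^a}{\partial x^i\partial x^j}u_1^i u_0^j$ respectively. These coincide precisely because the Hessian is symmetric, $\tfrac{\partial^2 f^a}{\partial x^i\partial x^j}=\tfrac{\partial^2 f^a}{\partial x^j\partial x^i}$; this appeal to the equality of mixed second partials (Schwarz/Clairaut) is the sole analytic ingredient in the whole argument, and it is exactly what makes the exchange-of-differentiation map $\eFun{\tau}$ natural.

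It remains to check the vertical-lift diagrams~\eqref{eq:TanFun3} and~\eqref{eq:TanFun5} and the kernel axiom~\eqref{eq:TanFun4}, all by the same substitution. Since $\eFun{\lambda}_U$ sends $(u,u_0)$ to $(u,0,0,u_0)$, the commutativities $\eFun{\pi}\eFun{T}\circ\eFun{\lambda}=\eFun{0}\circ\eFun{\pi}$ and $\eFun{\tau}\circ\eFun{\lambda}=\eFun{\lambda}$ are read off instantly, while the diagrams involving $\eFun{\lambda}\eFun{T}$, $\eFun{T}\eFun{\lambda}$ and $\eFun{\tau}\eFun{T}$ on $\eFun{T}^3 U$ are verified by writing out the Godement components and matching the seven slots; the additivity relation $(\eFun{+}\eFun{T})\circ(\eFun{\lambda}\times_{\eFun{0}}\eFun{\lambda})=\eFun{\lambda}\circ\eFun{+}$ is a one-line computation in the last coordinate. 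Finally, for~\eqref{eq:TanFun4} I would compute the pullback of $\eFun{T}^2 U \xrightarrow{(\eFun{\pi}\eFun{T},\,\eFun{T}\eFun{\pi})} \eFun{T}_2 U \xleftarrow{\eFun{0}\times_1\eFun{0}} U$ directly: it consists of the points $(u,0,0,u_{01})$, and $\eFun{\lambda}_U$ is precisely the resulting isomorphism $\eFun{T}U\cong\{(u,0,0,u_{01})\}$, so the square is a pointwise pullback. Assembling these verifications yields the claim.
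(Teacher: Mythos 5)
Your proposal is correct and is exactly the paper's approach: the paper offers no proof beyond the remark that the proposition "can be checked by explicit elementary calculation," and your coordinate-by-coordinate verification is precisely that calculation carried out. Your identification of the symmetry of mixed partials (Schwarz) as the sole analytic ingredient, entering through the naturality of $\eFun{\tau}$, is a correct and worthwhile observation that the paper leaves implicit.
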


\begin{Proposition}
The fiberwise multiplication by real numbers, 
\begin{equation*}
\begin{aligned}
  \eFun{\kappa}_U: \bbR \times \eFun{T} U 
  &\longrightarrow \eFun{T} U
  \\
  \bigl(r, (u,u_0) \bigr)
  &\longmapsto (u, r u_0)
  \,,
\end{aligned}
\end{equation*}
is a scalar multiplication of the tangent structure (Definition~\ref{def:RScalar}).
\end{Proposition}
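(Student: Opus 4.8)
The plan is to check directly, in the explicit coordinates fixed for $\eFun{T}^k U$ in Section~\ref{sec:TangentEuclidean}, that $\eFun{\kappa}$ is a natural transformation satisfying diagrams (i)--(vii) of Definition~\ref{def:RScalar}. Naturality is immediate: for a smooth map $f\colon U \to V$ the identity $\eFun{T}f \circ \eFun{\kappa}_U = \eFun{\kappa}_V \circ (\id_\bbR \times \eFun{T}f)$ is nothing but the $\bbR$-linearity of the Jacobian in formula~\eqref{eq:T1f}. Since every diagram in Definition~\ref{def:RScalar} is required only objectwise and the fibre products occurring in them are pointwise, it then suffices to verify commutativity on an arbitrary $U \subset \bbR^n$, where all the maps are given by the universal polynomial formulas of Section~\ref{sec:TangentEuclidean}.

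First I would dispose of the module axioms (i)--(v). Because $\eFun{\kappa}_U\bigl(r,(u,u_0)\bigr) = (u, r u_0)$ leaves the base point $u$ untouched, the bundle-morphism triangle~(i) holds trivially, and axioms~(ii)--(v) are exactly the associativity, unitality, and the two distributivity laws of the $\bbR$-vector space $\bbR^n$ applied fibrewise, using $\eFun{+}_U(u,u_0,v_0) = (u, u_0+v_0)$. None of these involve the twofold tangent functor, so each is a one-line verification.

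The real content lies in the compatibility with the symmetric structure~(vi) and with the vertical lift~(vii), both of which involve the scalar action on $\eFun{T}^2 U$. Here I would first record, in the coordinates $(u,u_0,u_1,u_{01})$, the two maps
\begin{align*}
  \eFun{\kappa}_{TU}\bigl(r,(u,u_0,u_1,u_{01})\bigr) &= (u, u_0, r u_1, r u_{01})\,,
  \\
  \eFun{T}\eFun{\kappa}_U\bigl(r,(u,u_0,u_1,u_{01})\bigr) &= (u, r u_0, u_1, r u_{01})\,,
\end{align*}
the first being the component of $\eFun{\kappa}$ at the object $TU$, which scales the fibre of $\eFun{\pi}\eFun{T}$, and the second being $\eFun{T}$ applied to $\eFun{\kappa}_U$. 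With these in hand, axiom~(vii) is immediate: composing with $\eFun{\lambda}_U\colon (u,u_0)\mapsto(u,0,0,u_0)$, both legs send $\bigl(r,(u,u_0)\bigr)$ to $(u,0,0,r u_0)$.

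The main obstacle, and the only step that is not purely formal, is to pin down $\eFun{T}\eFun{\kappa}_U$ as a map $\bbR\times\eFun{T}^2 U \to \eFun{T}^2 U$ and then to carry out the bookkeeping of the three copies of $\bbR^n$ in $\eFun{T}^2 U$ required for~(vi). Applying the tangent functor to the fibrewise-linear map $\eFun{\kappa}_U$ produces, by the product rule, a cross term $u_0\,\dot r$ in the $u_{01}$-slot; one must check that under the reading of $\bbR\times\eFun{T}^2 U$ as the zero tangent direction of the scalar factor this term vanishes, leaving the formula above. The remaining content of~(vi) is then that the exchange $\eFun{\tau}_U$ of the two first-order slots $u_0 \leftrightarrow u_1$ is precisely what converts the scaling of $(u_0,u_{01})$ performed by $\eFun{T}\eFun{\kappa}_U$ into the scaling of $(u_1,u_{01})$ performed by $\eFun{\kappa}_{TU}$; tracking this exchange carefully through the coordinates is the crux, after which commutativity of~(vi) reduces to a direct comparison of the two coordinate expressions.
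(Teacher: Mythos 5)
Your strategy---direct verification in the explicit coordinates of Section~\ref{sec:TangentEuclidean}---is exactly the paper's approach, which offers nothing beyond ``can be checked by explicit elementary calculation.'' Most of your work is correct: naturality is indeed the $\bbR$-linearity of the Jacobian in the formula for $\eFun{T}f$; axioms (i)--(v) are the fibrewise vector-space laws; your two formulas
\begin{equation*}
  \eFun{\kappa}_{TU}\bigl(r,(u,u_0,u_1,u_{01})\bigr) = (u,\,u_0,\,ru_1,\,ru_{01}),
  \qquad
  \eFun{T}\eFun{\kappa}_U\bigl(r,(u,u_0,u_1,u_{01})\bigr) = (u,\,ru_0,\,u_1,\,ru_{01})
\end{equation*}
are right (the cross term $u_0\,\dot r$ does die on the zero section of $T\bbR$); and (vii) comes out as $(u,0,0,ru_0)$ on both legs.

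The gap is at the step you yourself call the crux. If you actually perform the ``direct comparison'' for (vi), it comes out negative: the composite $\eFun{T}\eFun{\kappa}_U \circ (\id_\bbR \times \eFun{\tau}_U)$ sends $\bigl(r,(u,u_0,u_1,u_{01})\bigr)$ first to $\bigl(r,(u,u_1,u_0,u_{01})\bigr)$ and then to $(u,\,ru_1,\,u_0,\,ru_{01})$, whereas $\eFun{\kappa}_{TU}$ gives $(u,\,u_0,\,ru_1,\,ru_{01})$; these disagree whenever $u_0 \neq ru_1$. A \emph{single} precomposition with $\eFun{\tau}_U$ does not turn the scaling of the slots $(u_0,u_{01})$ into the scaling of the slots $(u_1,u_{01})$; what does is conjugation,
\begin{equation*}
  \eFun{\kappa}_{TU}(r,\cdot)
  = \eFun{\tau}_U \circ \eFun{T}\eFun{\kappa}_U(r,\cdot) \circ \eFun{\tau}_U,
  \qquad\text{equivalently}\qquad
  \eFun{\tau}_U \circ \eFun{\kappa}_{TU}
  = \eFun{T}\eFun{\kappa}_U \circ (\id_\bbR \times \eFun{\tau}_U),
\end{equation*}
which involves $\tau$ twice (legitimately, since $\tau \circ \tau = \id$). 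In other words, the triangle (vi) of Definition~\ref{def:RScalar}, read literally, fails in the euclidean model itself; the axiom that actually holds is the conjugated (square) form, which is evidently the intended reading. Your proof must either verify that square and flag that this is how (vi) has to be understood, or it stalls at precisely the point you deferred: the assertion that commutativity of (vi) ``reduces to a direct comparison of the two coordinate expressions'' is false as stated, because the two expressions you would compare are not equal.
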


\section{Left Kan extension to diffeological spaces}
\label{sec:LeftKanExtension}

The structures on smooth manifolds that we wish to generalize to diffeological spaces, such as the tangent bundle and the algebra of differential forms, are local and universal in the sense that they are defined on all open subsets of $\bbR^n$ and then glued together along an atlas. In categorial terms, the local structure is given by a functor $F: \Eucl \to \calC$ and the glueing operation by the colimit
\begin{equation*}
  FM := \Colim_{U \to M} FU
\end{equation*}
over a maximal atlas. In categorical terms, $FM$ is the pointwise left Kan extension of $F$ along the inclusion of euclidean spaces into smooth manifolds. For the generalization of this construction to diffeological spaces we replace the charts of the smooth manifold $M$ with the plots of the diffeological space $X$,
\begin{equation*}
  FX := \Colim_{U \to X} FU
  \,.
\end{equation*}
This is the pointwise left Kan extension of $F$ along the inclusion of euclidean spaces into diffeological spaces.

In order to use the left Kan extension as universal tool to generalize differential geometric structures to diffeological spaces, we have to study its categorical properties in some detail. For this it is best to work with the definition of diffeological spaces as concrete sheaves on euclidean spaces, which was elucidated in \cite{BaezHoffnung:2011}. The site $\Eucl$ of euclidean spaces is concrete, which means that there is a faithful functor $\Eucl \to \Set$, $U \to |U|$ that maps covers to surjective maps. A sheaf $D: \Eucl^\op \to \Set$ is concrete if it is a subsheaf of $U \mapsto \Set(|U|, D(*))$. Explicitly, this means that $D(U)$ is a collection of maps of sets $|U| \to X := D(*)$, called plots, that satisfy the properties of a sheaf. In this way, we recover the original definition of diffeology.

As is the case for any category of concrete sheaves, the category of diffeological spaces $\Dflg$ is a quasi-topos, a category with a classifier for strong subobjects, which has a number of good properties (Proposition~\ref{prop:ConcSheafProperties}). Other good properties of $\Dflg$ are inherited from the site $\Eucl$. For example, the category of plots of a diffeological space, the index category used to compute the pointwise left Kan extension, is sifted, which suggest a compatibility with finite products.

Every representable presheaf is a concrete sheaf, so that the Yoneda embedding factors through a full and faithful functor 
\begin{equation*}
  y: \Eucl \longrightarrow \Dflg
  \,.
\end{equation*}
While $y$ is simply the Yoneda embedding restricted on its codomain, the restriction changes the computation of colimits. A colimit in $\Dflg$ is given by first taking the pointwise colimit in the functor category $\Set^{\Eucl^\op}$ and then applying the left adjoint of the inclusion $\Dflg \to \Set^{\Eucl^\op}$. This second step has consequences for the properties of the left Kan extension, which is given by a colimit.

Most structures that we will consider are given by endofunctors $F: \Eucl \to \Eucl$. Since $\Eucl$ is not cocomplete and since we want to obtain an endofunctor of diffeological spaces, we have to place the codomain of $F$ in diffeological spaces by composing with $y$ before taking the Kan extension. The Kan extended endofunctor will be denoted by
\begin{equation*}
  \Lanyy F := \Lan_y yF : \Dflg \longrightarrow \Dflg
  \,.
\end{equation*}
The main question we will address in this section is the following.

\begin{Question}
What are the categorical properties of $\Lanyy: \End(\Eucl) \to \End(\Eucl)$ and which properties of $F$ are preserved by $\Lanyy$?
\end{Question}

It follows from the naturality of the Kan extension that $\Lanyy$ is a functor 
\begin{equation*}
  \Lanyy: \End(\Eucl) \longrightarrow \End(\Dflg)
\end{equation*}
of categories of endofunctors, which means that the vertical composition of natural transformations and, therefore, commutative diagrams of natural transformations are preserved. This is the most important property for our purposes, since it implies that the commutative diagrams that appear in the Definition~\ref{def:TangentStructure} of tangent structures carry over to the diffeological setting.

Unfortunately, this is where the good news about the left Kan extension to diffeological spaces end. Unlike the Kan extension to presheaves, the Kan extension of a functor to concrete sheaves does generally not preserve colimits, not even finite coproducts. Nor does $\Lanyy$ preserve the composition of endofunctors. Fortunately, the functors $F$ we want to extend have some good categorical properties, that entail good properties of their Kan extensions $\Lanyy F$.

Using that the category of plots $U \to X$ is sifted, we show that if $F$ preserves finite products, then so does its Kan extension $\Lanyy F$ (Proposition~\ref{prop:FpreservesProducts}). It also implies that $\Lanyy$ preserves finite products of endofunctors (Proposition~\ref{prop:LanyyProdFunc}). For the compatibility with coproducts we have to assume that $F$ is a cosheaf, that is, $F: \Eucl^\op \to \Eucl^\op$ is a sheaf. Then $\Lanyy F$ preserves coproducts (Corollary~\ref{cor:KanCoproducts2}) and subductions (Proposition~\ref{prop:KanStrongEpis}).

While $\Lanyy$ does not preserve the composition of endofunctors, there is a natural morphism
\begin{equation}
\label{eq:ProdMor1}
  \Lanyy(FG) \longrightarrow (\Lanyy F)(\Lanyy G)
  \,,
\end{equation}
which is generally not an isomorphism. We can show that if $F$ is a cosheaf, then \eqref{eq:ProdMor1} applied to a diffeological space is a subduction (Proposition~\ref{prop:KanProdSubduction}). The morphism 
\begin{equation*}
  \Lanyy(FGH) \longrightarrow \Lanyy(F)\Lanyy(G)\Lanyy(H)   
\end{equation*}
we obtain by applying \eqref{eq:ProdMor1} twice does not depend on whether we first apply it to $(FG)H$ or to $F(GH)$ (Proposition~\ref{prop:KanAssoc}). In this sense $\mu$ is associative. It is straightforward to see that \eqref{eq:ProdMor1} is natural in $F$ and $G$ (Proposition~\ref{prop:KanHorComp}).

Finally, we study the compatibility of the Kan extension with the $D$-topology of diffeological spaces. We observe that many of the functors on euclidean spaces we are interested in come with a natural transformation $F \to \Id$ such that the pullback along any open embedding $V \to U$ satisfies $FV \cong V \times_U FU$. We call such an $F \to \Id$ a local bundle, by default of a better term. We can show that if $F \to \Id$ is local, then so is its Kan extension $\Lanyy F \to \Id$ (Proposition~\ref{prop:KanLocal}). Then we prove that a morphism $FX \to GX$ of local bundles is an induction, subduction, epimorphism, or isomorphism if all restrictions to the open subsets of a cover of $X$ are (Proposition~\ref{prop:LocFuncCover}).

\subsection{Diffeological spaces as concrete sheaves}
\label{sec:DflgSpaces}

Recall that $\Eucl$ denotes the category which has all open subsets of euclidean spaces $\bbR^n$, $n\geq 0$ as objects and all smooth maps as morphisms. Open covers define a Grothendieck pretopology. 

\begin{Definition}
The small category $\Eucl$ together with the Grothendieck topology generated by the pretopology of open covers will be called the \textbf{site of euclidean spaces}.
\end{Definition}

The terminal object in $\Eucl$ is $* := \bbR^0$. The functor of points,
\begin{equation*}
\begin{aligned}
  | \Empty | : \Eucl &\longrightarrow \Set \\
  U &\longmapsto \Eucl(*,U)
  \,,
\end{aligned}    
\end{equation*}
is faithful, so that it equips $\Eucl$ with the structure of a concrete category. Moreover, every cover $\{U_i \to U\}$ is surjective on the underlying sets. A site with these properties is called \textbf{concrete}.

Let $X: \Eucl^\op \to \Set$ be a presheaf. Then there is a morphism of presheaves defined by
\begin{equation}
\label{eq:ConcetePrsh1}
\begin{aligned}
  X(U) &\longrightarrow \Set\bigl(|U|, |X| \bigr)
  \\  
  p &\longmapsto
  \bigl((* \stackrel{u}{\to} U) \mapsto X_u(p) \bigr)
  \,,
\end{aligned}
\end{equation}
where $X_u \equiv X(u): X(U) \to X(*)$ is the restriction of $X(U)$ to the point $u$.

\begin{Definition}
\label{def:ConreteSheaf}
A presheaf $X: \Eucl^\op \to \Set$ on a concrete site is 
\textbf{concrete} if \eqref{eq:ConcetePrsh1} is a monomorphism, i.e.~if the maps defined in~\eqref{eq:ConcetePrsh1} are injective for all $U \in \Eucl$. A sheaf is concrete if it is concrete as a presheaf. A morphism between concrete sheaves is a morphism of the underlying presheaves.
\end{Definition}

\begin{Definition}
\label{def:Difflg}
A concrete sheaf on the site of euclidean spaces is called a \textbf{diffeological space}. A morphism of diffeological spaces is a morphism of sheaves. The category of diffeological spaces will be denoted by $\Dflg$.
\end{Definition}

\begin{Theorem}[Thm.~5.25 in \cite{BaezHoffnung:2011}]
\label{thm:ConcQuasiCat}
The category of diffeological spaces is a quasitopos with small limits and small colimits.
\end{Theorem}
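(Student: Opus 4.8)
The plan is to realise $\Dflg = \ConcShv(\Eucl)$ as the category of \emph{separated objects} for a local operator on the Grothendieck topos $\Shv(\Eucl)$, and then to invoke the general fact that separated objects in a topos form a quasitopos. First I would record that $\Shv(\Eucl)$, being the category of sheaves on the small site $\Eucl$, is a Grothendieck topos; in particular it is complete, cocomplete, cartesian closed, and carries a subobject classifier. All of the structure we are after will be extracted from this ambient topos.

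The key step is to reinterpret the concreteness condition of Definition~\ref{def:ConreteSheaf} as a separatedness condition. Because the site is concrete, the families of global points $\{u \colon * \to U\}_{u \in |U|}$ determine a local operator (Lawvere--Tierney topology) $j$ on $\Shv(\Eucl)$ with the property that an object $X$ is $j$-separated exactly when each comparison map $X(U) \to \Set(|U|,|X|)$ of~\eqref{eq:ConcetePrsh1} is injective, that is, exactly when the sheaf $X$ is concrete. Indeed, separatedness of $X$ for the point-family on $U$ says precisely that $s,t \in X(U)$ agreeing at every point $u \in |U|$ must coincide, which is the injectivity of~\eqref{eq:ConcetePrsh1}. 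Thus $\Dflg$ is identified with the full subcategory $\mathrm{Sep}_j(\Shv(\Eucl))$ of $j$-separated objects.

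With this identification in hand, I would invoke the general theorem that for any local operator $j$ on a topos $\calE$, the full subcategory $\mathrm{Sep}_j(\calE)$ of $j$-separated objects is a quasitopos, and is complete and cocomplete whenever $\calE$ is a Grothendieck topos (Penon; see also Johnstone's \emph{Elephant}, Part A4.4, and Borceux's \emph{Handbook}, Vol.~3). This yields the quasitopos structure at once, and it is exactly the route taken in \cite{BaezHoffnung:2011}. For the explicit completeness and cocompleteness count I would exhibit the reflection directly: the inclusion $\Dflg \hookrightarrow \Shv(\Eucl)$ has a left adjoint, the \emph{concretization} $L$, sending a sheaf $X$ to the image of $X$ in $U \mapsto \Set(|U|,|X|)$. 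Subobjects of concrete sheaves are concrete and products of concrete sheaves are concrete, so any small limit of concrete sheaves, formed in $\Shv(\Eucl)$, is again concrete; hence small limits in $\Dflg$ exist and agree with those computed in $\Shv(\Eucl)$. Small colimits are then obtained by forming the colimit in $\Shv(\Eucl)$ and applying $L$.

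The main obstacle is the quasitopos structure itself, namely the strong subobject classifier and local cartesian closure, which is exactly the content packaged into the separated-objects theorem. I would stress that the concretization $L$ is \emph{not} left exact, since otherwise $\Dflg$ would be a left-exact localization of a topos and hence itself a topos, which it is not; it is precisely the separatedness viewpoint (rather than a sheaf-subcategory viewpoint) that produces the quasitopos structure instead of a topos. If one wanted a fully self-contained argument in place of the citation, the laborious part would be to construct the strong subobject classifier by hand, classifying the strong monomorphisms of $\Dflg$, which are the injective inductions, that is, the subsets equipped with the induced subspace diffeology, and to verify cartesian closedness of each slice using the functional diffeology on the hom-objects. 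The completeness and cocompleteness, by contrast, follow routinely from the reflectivity of $\Dflg$ as described above.
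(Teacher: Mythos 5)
Your proposal is correct and takes essentially the same route as the paper's source: the paper gives no proof of its own but cites Thm.~5.25 of \cite{BaezHoffnung:2011}, where concrete sheaves are exhibited precisely as the separated objects (biseparated sheaves) for the second topology generated by the families of points, and the general Grothendieck-quasitopos theorem is then invoked. Your construction of limits and colimits via the concretization reflection likewise matches the adjunction $K \dashv I$ recalled in Section~\ref{sec:DflgSpaces} of the paper.
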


Theorem~\ref{thm:ConcQuasiCat} implies that the category of diffeological spaces has a number of convenient properties. For clarity and later reference, we will spell out some of them.

\begin{Proposition}
\label{prop:ConcSheafProperties}
The category $\Dflg$ of diffeological spaces has the following properties:
\begin{itemize}

\item $\Dflg$ is locally cartesian closed, i.e.~for every object $X$ in $\Dflg$ the overcategory $\Dflg\Comma X$ is cartesian closed.

\item Strong monomorphisms and strong epimorphisms are effective.

\item (Strong) monomorphisms and (strong) epimorphisms are stable under pullback.

\item $\Dflg$ is quasiadhesive, that is, the pushout of a strong monomorphism is a strong monomorphism and the pushout square is a pullback square.

\item The initial object is strict, i.e.~every morphism $X \to \emptyset$ is an isomorphism.

\item Coproducts are disjoint, i.e.~$X \to X \sqcup Y \leftarrow Y$ are monomorphisms and $X \times_{X \sqcup Y} Y \cong \emptyset$.

\item The functor of points $\Dflg \to \Set$, $X \to \Dflg(*,X)$ is faithful. It has a left and a right adjoint, so that it preserves limits and colimits.

\end{itemize}

\end{Proposition}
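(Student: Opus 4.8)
The plan is to obtain every item from the single structural input that $\Dflg$ is a quasitopos (Theorem~\ref{thm:ConcQuasiCat}), together with its description as concrete sheaves. Local cartesian closedness is built into the definition of a quasitopos, so the first item needs no separate argument; the remaining exactness properties are instances of the general theory of quasitoposes (as developed by Wyler and in Johnstone's \emph{Elephant}), which I would assemble and cite rather than reprove from scratch.

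For the monomorphism/epimorphism items I proceed as follows. Monomorphisms are pullback-stable in any category with pullbacks, so that half is free. For epimorphisms I use local cartesian closedness directly: pullback along any morphism has a right adjoint, hence preserves pushouts, and since epimorphisms are characterised by a pushout condition, pullback preserves them. That strong monomorphisms and strong epimorphisms coincide with the regular ones---hence are effective---and that both are stable under pullback are standard features of quasitoposes. Quasiadhesivity, i.e.\ that the pushout of a strong mono is again a strong mono and that the square is simultaneously a pullback, I would deduce from the theorem of Johnstone, Lack, and Soboci\'{n}ski that every quasitopos is quasiadhesive.

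The two extensivity-flavoured items I treat separately. Strictness of the initial object follows from cartesian closedness alone: $(-) \times X$ is a left adjoint and so preserves $\emptyset$, giving $\emptyset \times X \cong \emptyset$; a morphism $X \to \emptyset$ then produces a section of a projection exhibiting $X$ as a retract of $\emptyset$, and a retract of an initial object is initial, so $X \cong \emptyset$. Disjointness of coproducts is the assertion that $\Dflg$ is extensive, which holds for any quasitopos; should a self-contained proof be preferred, I would verify disjointness in the ambient presheaf category and transport it along the reflection onto concrete sheaves.

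Finally, for the functor of points $\Gamma = \Dflg(*,-)$, faithfulness is precisely the concreteness condition of Definition~\ref{def:ConreteSheaf}: a morphism of concrete sheaves is determined by its underlying map on points. The left and right adjoints are the discrete and codiscrete diffeology functors $\mathrm{disc}, \mathrm{codisc} \colon \Set \to \Dflg$, where $\mathrm{disc}(S)$ carries the finest diffeology (locally constant plots) and $\mathrm{codisc}(S)$ the coarsest (every set map is a plot); I would check $\mathrm{disc} \dashv \Gamma \dashv \mathrm{codisc}$ straight from the universal properties, whereupon preservation of all limits and colimits is automatic. I expect the only genuinely diffeology-specific work to be this construction of the two adjoints; the real obstacle is instead bibliographic, namely citing sources that give the \emph{strong} (not merely the weak) forms of the effectivity, pullback-stability, and quasiadhesivity statements, since these are the deepest inputs and are easily conflated with their topos-theoretic counterparts.
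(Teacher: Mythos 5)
Your proposal is correct and follows essentially the same route as the paper, which offers no separate argument for this proposition but presents it precisely as a consequence of Theorem~\ref{thm:ConcQuasiCat} (the quasitopos structure of $\Dflg$), i.e.\ as an assembly of standard quasitopos facts together with the concrete-sheaf description (Definition~\ref{def:ConreteSheaf}, Remark~\ref{rmk:CoFreeDflg}) for the functor-of-points item. Your added details --- the pullback-stability argument via local cartesian closedness, the retract argument for strictness of $\emptyset$, the discrete/codiscrete adjoints, and the references to Wyler, Johnstone, and Johnstone--Lack--Soboci\'{n}ski for the strong forms of the statements --- are exactly the right way to fill in what the paper leaves implicit.
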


\begin{Terminology}
The strong epimorphisms in $\Dflg$ are called \textbf{subductions}, the strong monomorphisms \textbf{inductions}.
\end{Terminology}

\begin{Example}
\label{ex:Diffeologies}
Here are some of the most basic examples for diffeologies:
\begin{itemize}

\item[(a)] The \textbf{fine diffeology} or \textbf{discrete diffeology} on a set $S$ is the diffeology for which the plots are the locally constant maps.\footnote{In \cite[Example~(2), p.~5794]{BaezHoffnung:2011} it is stated incorrectly that the discrete diffeology is given by the constant maps.}

\item[(b)] The \textbf{coarse diffeology}, or \textbf{indiscrete diffeologoy}, or \textbf{trivial diffeology} on a set $S$ is given by $U \mapsto \Set\bigl(|U|, S\bigr)$, i.e.~all maps are plots.

\item[(c)] Every topological space $X$ is equipped with the \textbf{continuous diffeology} given by $U \mapsto  \Top(U,X)$, i.e.~the plots are the continuous maps.

\item[(d)] Every smooth finite-dimensional manifold $M$ is equipped with the \textbf{natural diffeology} given by $U \mapsto \Mfld(U,M)$, i.e.~the plots are the infinitely often differentiable maps.

\item[(e)] We will denote the exponential objects in $\Dflg$ by
\begin{equation*}
  \intHom(X,Y) \equiv Y^X
\end{equation*}
and call them the \textbf{diffeological mapping spaces}. The diffeology, which is given by the universal property
\begin{equation*}
  \Dflg\bigl(U, \intHom(X,Y) \bigr)
  \cong \Dflg(U \times X, Y)
  \,,
\end{equation*}
is called the \textbf{functional diffeology}.

\end{itemize}
\end{Example}

\begin{Remark}
\label{rmk:CoFreeDflg}
By Proposition~\ref{prop:ConcSheafProperties}, the forgetful functor $\Dflg \to \Set$, $X \mapsto |X|$ has a left and a right adjoint. The right adjoint equips a set $S$ with the coarse diffeology. The left adjoint equips it with the fine diffeology. In other words, the fine diffeology on a set is the free diffeology, the coarse diffeology is the cofree diffeology.
\end{Remark}

\begin{Remark}
\label{rmk:MlfdDifflgEmbed}
The map that sends a smooth manifold $M$ to its natural diffeology $U \mapsto \Mfld(U,M)$ defines a full, faithful, and injective functor $\Mfld \to \Dflg$.
\end{Remark}

\begin{Remark}
\label{rmk:D-topology}
Every diffeological space $X$ is naturally equipped with the finest topology such that all plots are continuous, which is called the $D$-topology. This topology is determined by the smooth curves only, so that many different diffeologies induce the same topology \cite[Thm.~3.7]{ChristensenSinnamonWu:2014}. Mapping a diffeology on $X$ to the induced topology is left adjoint to mapping a topology to the continuous diffeology \cite[Prop.~3.3]{ChristensenSinnamonWu:2014}. The topology induced by the discrete (trivial) diffeology is the discrete topology. In general, however, neither the unit nor the counit of the adjunction is an isomorphism.
\end{Remark}

The maps $p \in X(U) \subset \Set(|U|, |X|)$ of a diffeological space $X$ are called \textbf{plots}. If we spell out the defining conditions of a concrete sheaf, we obtain the traditional definition of diffeological spaces in terms of plots.

For every open cover $\{U_i \to U\}$ in $\Eucl$, $U$ is the coequalizer of $\coprod_{i,j} U_i \cap U_j \rightrightarrows \coprod U_i$. In other words, the site of euclidean spaces is subcanonical, so that every representable presheaf is a sheaf. Since every representable presheaf is concrete, it follows that all representable presheaves on $\Eucl$ are concrete sheaves. We conclude that the Yoneda embedding $Y: X \mapsto \Dflg(\Empty, X)$ factors as
\begin{equation*}
\begin{tikzcd}
& \Set^{\Eucl^\op}
\\
\Eucl \ar[ur, "Y"] \ar[r, "y"'] &
\Dflg \ar[u, "I"']
\end{tikzcd}    
\end{equation*}
through a functor $y$. Since $Y$ and $I$ are full and faithful, so is $y$. Since $I$ is full and faithful, the Yoneda lemma implies that the evaluation of the concrete sheaf $X \in \Dflg$ on $U \in \Eucl$ is given by
\begin{equation*}
\begin{split}
  X(U) 
  &\cong \Set^{\Eucl^\op}(YU, IX)
  \cong \Set^{\Eucl^\op}(IyU, IX)
  \\
  &\cong
  \Dflg(yU, X)
  \,.    
\end{split}
\end{equation*}
It follows that limits in $\Dflg$ are computed pointwise and that $I$ preserves limits. By the adjoint functor theorem, $I$ has a left adjoint,
\begin{equation}
\label{eq:PreshDflgAdjunction}
\begin{tikzcd}
K: \Set^{\Eucl^\op} \ar[r, shift right] &
\Dflg :I \ar[l, shift right]
\,,
\end{tikzcd}    
\end{equation}
which was computed and studied in \cite[Sec.~5.3]{BaezHoffnung:2011}. Explicitly, $K$ is given by a procedure called concretization followed by the Grothendieck plus construction.

The left adjoint $K$ is a retract, $KI \cong \id_{\Dflg}$, which implies that the colimit of a diagram in $X: \calI \to \Dflg$ can be computed as
\begin{equation*}
\begin{split}
  \Colim_{i \in \calI} X_i
  &\cong 
  \Colim_{i \in \calI} KIX_i
  \\
  &\cong 
  K \Colim_{i \in \calI} IX_i
  \,,
\end{split}
\end{equation*}
that is, by first computing the colimit in presheaves and then applying $K$. As a further consequence, it can be shown that $y$ is dense:

\begin{Proposition}[Prop.~51 in \cite{BaezHoffnung:2011}]
\label{prop:ColimRep}
Every $X \in \Dflg$ is the colimit of $y \Comma X \to \Eucl \to \Dflg$, which we will write as
\begin{equation}
\label{eq:ColimPlotCat}
  X \cong \Colim_{yU \to X} yU 
  \,.
\end{equation}
\end{Proposition}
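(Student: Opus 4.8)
The plan is to deduce this from the classical density (co-Yoneda) theorem for presheaf categories, together with the adjunction $K \dashv I$ and the retraction $KI \cong \id_{\Dflg}$ recorded just above the statement. First I would invoke the co-Yoneda lemma in $\Set^{\Eucl^\op}$: every presheaf is canonically the colimit of the representables lying over it. Applied to the presheaf $IX$, this gives
\[
  IX \cong \Colim_{YU \to IX} YU
  \,,
\]
where the index category is the comma category $Y \Comma IX$ and the diagram sends an object $(U,\phi)$ to $YU$.

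Next I would reindex this colimit. Since $Iy = Y$ and both $I$ and $y$ are full and faithful, the identification $X(U) \cong \Dflg(yU, X)$ established above upgrades to an isomorphism of comma categories $Y \Comma IX \cong y \Comma X$, under which the presheaf diagram $(U,\phi) \mapsto YU$ becomes $(U,p) \mapsto IyU$. Hence, rewriting the previous display over the plot category,
\[
  IX \cong \Colim_{yU \to X} IyU
\]
as an isomorphism of presheaves.

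Now I would apply the left adjoint $K$. Being a left adjoint, $K$ preserves all colimits, and by the recipe for colimits in $\Dflg$ recalled above, the colimit $\Colim_{yU \to X} yU$ formed \emph{in $\Dflg$} is computed by taking the pointwise colimit of the presheaves $IyU$ and then applying $K$. Combining this with the previous display and the retraction $KI \cong \id_{\Dflg}$ (which in particular yields $KIy \cong y$), I obtain
\[
  \Colim_{yU \to X} yU
  \cong K\Bigl( \Colim_{yU \to X} IyU \Bigr)
  \cong K(IX)
  \cong X
  \,,
\]
which is the asserted isomorphism, and one checks directly that it is the canonical one.

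The only real subtlety is bookkeeping about the ambient category in which each colimit is formed: the density theorem produces a colimit in $\Set^{\Eucl^\op}$, whereas the statement asserts a colimit in $\Dflg$. These are bridged precisely by the two facts already available, namely that $K$ is a colimit-preserving left adjoint and that $KI$ is naturally isomorphic to the identity. No analysis enters here; the entire content is the formal interplay of the co-Yoneda lemma with the reflective embedding of concrete sheaves inside presheaves, so I expect this to be the easiest of the results in this section rather than an obstacle.
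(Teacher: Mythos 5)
Your proof is correct, and it is essentially the argument the paper gestures at: the paper gives no proof of its own beyond the citation to Baez--Hoffnung, but the text immediately preceding the proposition (colimits in $\Dflg$ computed by forming the presheaf colimit and applying $K$, together with $KI \cong \id_{\Dflg}$) supplies exactly the ingredients you combine with the co-Yoneda lemma and the reindexing $Y \Comma IX \cong y \Comma X$. The one point you flag as needing a check --- that the resulting isomorphism is induced by the tautological cocone of plots --- is indeed the only remaining bookkeeping, and it goes through by a routine diagram chase.
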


\begin{Terminology}
The comma category $y \Comma X$ is called the \textbf{category of plots} of $X$.
\end{Terminology}

\begin{Remark}
\label{not:SubcatIdentify}
It is customary and convenient to identify notationally the domain of a plot $U \in \Eucl$ with the diffeological space $yU \in \Dflg$. In this paper, however, we deal with a number of subtleties of Kan extensions along $y$ where this identification would invite wrong proofs by notation (a trap the author has fallen into more than once). Therefore, we will always spell out the embedding $y$.
\end{Remark}

\subsection{Left Kan extension to diffeological spaces}
\label{sec:LeftKanExt}

\begin{Notation}
Let $\eFun{F}: \Eucl \to \Eucl$ be an endofunctor. The left Kan extension of $y\eFun{F}$ along the embedding $y: \Eucl \hookrightarrow \Dflg$ will be denoted by 
\begin{equation}
\label{eq:LanyyDef1}
  \Lanyy \eFun{F} := \Lan_y y \eFun{F}
  \,,
\end{equation}
which is an endofunctor of $\Dflg$.
\end{Notation}

The left Kan extension will be our device to extend the tangent structure of euclidean to diffeological spaces. $\Lanyy$ is functorial, which means that $\Lanyy$ preserves the vertical compostion of natural transformations $\hat{\alpha}: \hat{F} \to \hat{F}'$ and $\hat{\alpha}': \hat{F}' \to \hat{F}''$ between endofunctors $\hat{F}, \hat{F}', \hat{F}'' \in \End(\Eucl)$, that is,
\begin{equation}
\label{eq:KanVertComp}
  \Lanyy(\alpha' \circ \alpha)
  =   
  (\Lanyy\alpha') \circ (\Lanyy\alpha)
  \,.
\end{equation}

Since $\Eucl$ is small and $\Dflg$ is cocomplete, $\Lanyy \eFun{F}$ exists and is pointwise, that is, it can be computed by the colimit \cite[Thm.~X.5.3]{MacLane:Working}
\begin{equation}
\label{eq:LanColim}
\begin{split}
  (\Lanyy\eFun{F})(X)
  &\cong \Colim( 
  y \Comma X \longrightarrow 
  \Eucl \stackrel{y\eFun{F}}{\longrightarrow} 
  \Dflg)
  \\
  &= \Colim_{yU \to X} y \eFun{F} U 
  \,,
\end{split}
\end{equation}
for all $X \in \Dflg$. Let $\eFun{\alpha}: \eFun{F} \to \eFun{G}$ be a natural transformation of functors $\Eucl \to \Eucl$. Then $y\eFun{\alpha}: y\eFun{F} \to y\eFun{G}$ is a natural transformation of functors $\Eucl \to \Dflg$. The left Kan extension $\Lan_y y\eFun{F}$ is functorial in $y\eFun{F}$, so that we have a natural transformation
\begin{equation}
\label{eq:LanyyDef2}
  \Lanyy\eFun{\alpha} 
  := \Lan_y y\eFun{\alpha}: 
  \Lanyy \eFun{F}
  \longrightarrow
  \Lanyy\eFun{G} 
  \,.
\end{equation}
Together~\eqref{eq:LanyyDef1} and \eqref{eq:LanyyDef2} define a functor 
\begin{equation}
\label{eq:KanExtFunctor}
  \Lanyy: \End(\Eucl) \longrightarrow \End(\Dflg)
  \,,
\end{equation}
where $\End(\calC)$ denotes the category of endofunctors and natural transformations of the category $\calC$.

\begin{Proposition}
\label{prop:LanRestrict}
The diagram
\begin{equation*}
\begin{tikzcd}[column sep={large}]
\Eucl \ar[d, "y"'] \ar[r, "\eFun{F}"] & \Eucl \ar[d, "y"]
\\
\Dflg \ar[r, "\Lanyy\eFun{F}"'] & \Dflg
\end{tikzcd}    
\end{equation*}
commutes for all endofunctors $\eFun{F}$, that is
\begin{equation}
\label{eq:LanCommute}
  (\Lanyy\eFun{F})yU \cong y\eFun{F} U    
\end{equation}
for all $U \in \Eucl$. 
\end{Proposition}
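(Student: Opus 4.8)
The plan is to use the pointwise formula~\eqref{eq:LanColim} together with the full faithfulness of $y$ to reduce the defining colimit to the value of the diagram at a terminal object of its index category; once such a terminal object is exhibited, the isomorphism is immediate.

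First I would describe the category of plots of $X = yU$ explicitly. Its objects are pairs $(U', u)$ with $u : yU' \to yU$ a morphism in $\Dflg$, and a morphism $(U', u) \to (U'', u')$ is a map $g : U' \to U''$ in $\Eucl$ with $u' \circ yg = u$. Since $y$ is full and faithful, the assignment $g \mapsto yg$ is a bijection $\Eucl(U', U) \to \Dflg(yU', yU)$, so the plot category $y \Comma yU$ is isomorphic to the slice category $\Eucl \Comma U$. In particular it has a terminal object, namely $(U, \id_{yU})$, corresponding to $\id_U$: for any object $(U', u)$ there is a unique $g : U' \to U$ with $yg = u$, and this $g$ is the unique morphism $(U', u) \to (U, \id_{yU})$.

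Next I would invoke the elementary fact that the colimit of a diagram whose index category possesses a terminal object $t$ is canonically isomorphic to the value of the diagram at $t$, the colimiting cocone being given by the images of the unique maps into $t$ (equivalently, the inclusion of $t$ is a final functor). Applying this to the diagram $y \Comma yU \to \Eucl \xrightarrow{yF} \Dflg$, whose value at the terminal object $(U, \id_{yU})$ is $yFU$, the pointwise formula~\eqref{eq:LanColim} yields
\begin{equation*}
  (\Lanyy F)(yU) \cong \Colim_{yU' \to yU} yFU' \cong yFU
  \,,
\end{equation*}
which is the claim. I would note that this isomorphism is precisely the component at $U$ of the unit of the Kan extension, hence natural in $U$.

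The argument is essentially formal, so there is no deep obstacle; the only point requiring care is the identification of the plot category of $yU$ with $\Eucl \Comma U$, which rests squarely on the full faithfulness of $y$. Here I would keep the embedding $y$ explicit throughout, heeding the warning of Remark~\ref{not:SubcatIdentify} against conflating $U$ with $yU$, since the whole content of the statement lives in the distinction between $yFU$ and the colimit $(\Lanyy F)(yU)$. Notably, because the index category has a terminal object, the colimit equals the value at that object in \emph{any} category, so the delicate description of colimits in $\Dflg$ via the left adjoint $K$ of~\eqref{eq:PreshDflgAdjunction} is not needed; the subtleties of computing colimits of concrete sheaves are simply bypassed in this special case.
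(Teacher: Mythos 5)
Your proof is correct and is essentially the paper's argument made explicit: the paper proves this proposition by citing the standard fact that a pointwise left Kan extension along a full and faithful functor restricts (up to natural isomorphism, via the unit) to the original functor \cite[Cor.~3, Sec.~X.3]{MacLane:Working}, \cite[Prop.~4.23]{Kelly:Enriched}, and the standard proof of that fact is precisely your argument. Your identification of $y \Comma yU$ with the slice $\Eucl \Comma U$ via full faithfulness, followed by the observation that a colimit over an index category with a terminal object is the value of the diagram there, is exactly what underlies the cited results, so the two proofs coincide in substance.
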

\begin{proof}
Since $y$ is full and faithful, the statement follows from \cite[Cor.~3, Sec.~X.3]{MacLane:Working} or from \cite[Prop.~4.23]{Kelly:Enriched}.
\end{proof}


\begin{Corollary}
The functor~\eqref{eq:KanExtFunctor} is full and faithful.
\end{Corollary}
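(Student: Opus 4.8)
The plan is to show directly that for every pair of endofunctors $\eFun{F}, \eFun{G}: \Eucl \to \Eucl$ the map
\begin{equation*}
  \Nat(\eFun{F}, \eFun{G}) \longrightarrow \Nat(\Lanyy\eFun{F}, \Lanyy\eFun{G})
  \,,\qquad
  \eFun{\alpha} \longmapsto \Lanyy\eFun{\alpha}
  \,,
\end{equation*}
is a bijection. I would produce this bijection as a composite of three natural bijections, the middle one being exactly Proposition~\ref{prop:LanRestrict}. Because $\Lanyy\eFun{F} = \Lan_y y\eFun{F}$ is a pointwise left Kan extension, it satisfies the universal property expressed by the adjunction between $\Lan_y$ and restriction along $y$; applied to the endofunctor $\Lanyy\eFun{G}$ of $\Dflg$ this gives a bijection
\begin{equation*}
  \Nat(\Lanyy\eFun{F}, \Lanyy\eFun{G})
  \;\cong\;
  \Nat\bigl(y\eFun{F},\, (\Lanyy\eFun{G})y\bigr)
  \,,
\end{equation*}
natural in both variables and given explicitly by $\beta \mapsto (\beta y)\circ \eta^{\eFun{F}}$, where $\eta^{\eFun{F}}: y\eFun{F} \to (\Lanyy\eFun{F})y$ is the unit of the Kan extension.

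Proposition~\ref{prop:LanRestrict} tells us that the unit component $\eta^{\eFun{G}}: y\eFun{G} \to (\Lanyy\eFun{G})y$ is an isomorphism, so the target of the previous bijection may be rewritten as $\Nat(y\eFun{F}, y\eFun{G})$. Finally, since $y: \Eucl \to \Dflg$ is full and faithful, postcomposition with $y$ yields a bijection $\Nat(\eFun{F}, \eFun{G}) \cong \Nat(y\eFun{F}, y\eFun{G})$: a natural transformation $y\eFun{F} \to y\eFun{G}$ has components in the image of $y$, which correspond bijectively to morphisms in $\Eucl$, and naturality transports across this identification. Composing the three bijections gives the desired bijection $\Nat(\eFun{F}, \eFun{G}) \cong \Nat(\Lanyy\eFun{F}, \Lanyy\eFun{G})$. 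Note that Proposition~\ref{prop:LanRestrict} is invoked only at objects of the form $y\eFun{G}$, so the argument stays within the results already established.

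The only step that needs genuine care---and the one I expect to be the main, if modest, obstacle---is checking that the composite bijection coincides with the functorial assignment $\eFun{\alpha} \mapsto \Lanyy\eFun{\alpha}$, rather than being merely some identification of hom-sets. Tracing $\eFun{\alpha}: \eFun{F} \to \eFun{G}$ backwards through the chain, it becomes $\eta^{\eFun{G}} \circ y\eFun{\alpha}: y\eFun{F} \to (\Lanyy\eFun{G})y$, and hence the unique $\beta: \Lanyy\eFun{F} \to \Lanyy\eFun{G}$ satisfying $(\beta y)\circ \eta^{\eFun{F}} = \eta^{\eFun{G}} \circ y\eFun{\alpha}$. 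On the other hand $\Lanyy\eFun{\alpha} = \Lan_y(y\eFun{\alpha})$ by definition, and the naturality of the unit $\eta$ with respect to $y\eFun{\alpha}$ states precisely that $\bigl((\Lanyy\eFun{\alpha})\, y\bigr)\circ \eta^{\eFun{F}} = \eta^{\eFun{G}} \circ y\eFun{\alpha}$. By the uniqueness clause of the universal property we conclude $\beta = \Lanyy\eFun{\alpha}$, which is the coherence we need. With this in hand, $\Lanyy$ is full and faithful.
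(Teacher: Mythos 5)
Your proof is correct and is essentially the argument the paper intends: the corollary is stated as an immediate consequence of Proposition~\ref{prop:LanRestrict}, precisely because the unit isomorphism $y\eFun{G} \cong (\Lanyy\eFun{G})y$, the universal property of the left Kan extension, and full faithfulness of $y$ combine to give the bijection $\Nat(\eFun{F},\eFun{G}) \cong \Nat(\Lanyy\eFun{F},\Lanyy\eFun{G})$. Your final coherence check that the composite bijection really is $\eFun{\alpha} \mapsto \Lanyy\eFun{\alpha}$ is exactly the point that makes this argument complete rather than a mere identification of hom-sets.
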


\subsection{Compatibility with products, coproducts, and subductions}

Since $y: \Eucl \to \Dflg$ is full and faithful, a smooth map $f: U \to V$ of euclidean spaces is a strong epimorphism if and only if $yf: yU \to yV$ is a strong epimorphism, which is the same thing as a subduction. For this reason we will call a strong epimorphism $f$ a subduction. This is the case if every point $v_0 \in V$ has an open neighborhood $V_0 \subset V$ such that there is a smooth map $g_0: V_0 \to U$ satisfying $f \circ g_0 = \id_{V_0}$. In short, $f$ is a subduction if it has local sections. In particular, every surjective submersion is a subduction.

\begin{Proposition}
\label{prop:KanPreserveSubduc}
Let $\eFun{\alpha}: \eFun{F} \to \eFun{G}$ be a natural transformation of endofunctors of $\Eucl$. If $\eFun{\alpha}_U: \eFun{F} U \to \eFun{G} U$ is a subduction for all $U \in \Eucl$, then $(\Lanyy\eFun{\alpha})_X: (\Lanyy\eFun{F})X \to (\Lanyy\eFun{G})X$ is a subduction for all $X \in \Dflg$.
\end{Proposition}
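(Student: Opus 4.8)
The plan is to prove this via the local lifting characterization of subductions: a morphism $g\colon P \to Q$ in $\Dflg$ is a subduction if and only if the diffeology of $Q$ coincides with the pushforward along $g$ of the diffeology of $P$, equivalently, if and only if every plot of $Q$ locally lifts through $g$ to a plot of $P$. Write $g := (\Lanyy\eFun{\alpha})_X\colon P \to Q$ with $P = (\Lanyy\eFun{F})X$ and $Q = (\Lanyy\eFun{G})X$. By the pointwise colimit formula~\eqref{eq:LanColim}, $P$ and $Q$ carry colimit cocones $\sigma^F_p\colon y\eFun{F}U \to P$ and $\sigma^G_p\colon y\eFun{G}U \to Q$ indexed by the plots $p\colon yU \to X$, and $g$ is by construction the morphism of colimits induced by the natural transformation $y\eFun{\alpha}$, so that $g \circ \sigma^F_p = \sigma^G_p \circ y\eFun{\alpha}_U$ for every plot $p$.

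First I would reduce the pushforward condition to its essential half. Since $g$ is smooth, the diffeology of $Q$ is automatically at least as coarse as the pushforward of that of $P$, so the content lies entirely in showing that every plot of $Q$ lifts locally. (Surjectivity of $g$ on underlying sets is subsumed by the lifting property applied to constant plots, and in any case follows from the functor of points preserving colimits together with the fact that each $\eFun{\alpha}_U$ is surjective.) Next I would factor plots through the cocone using the general description of colimits in $\Dflg$. Writing the colimit as a coequalizer of a pair of maps between coproducts and using that (i) every plot of a coproduct locally factors through a single summand and (ii) the canonical map to a coequalizer is a regular, hence strong, epimorphism, one sees that every plot $q\colon V \to Q$ locally factors through a cocone leg: after passing to an open cover $\{V_i\}$ of $V$ there are plots $r_i \in \eFun{G}U_{j_i}(V_i)$ with $q|_{V_i} = \sigma^G_{p_{j_i}} \circ r_i$.

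With this factorization in hand the lifting step is elementary. Because $\eFun{\alpha}_{U_{j_i}}\colon \eFun{F}U_{j_i} \to \eFun{G}U_{j_i}$ is a subduction of euclidean spaces, it has local sections, so around any point $v \in V_i$ the plot $r_i$ lifts on a neighborhood $V_i' \ni v$ to a plot $\tilde r\colon V_i' \to \eFun{F}U_{j_i}$ with $\eFun{\alpha}_{U_{j_i}} \circ \tilde r = r_i|_{V_i'}$. Setting $\tilde q := \sigma^F_{p_{j_i}} \circ \tilde r$ and invoking the naturality identity $g \circ \sigma^F_p = \sigma^G_p \circ y\eFun{\alpha}_U$ yields $g \circ \tilde q = \sigma^G_{p_{j_i}} \circ \eFun{\alpha}_{U_{j_i}} \circ \tilde r = q|_{V_i'}$, which is exactly a local lift of $q$ through $g$. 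As $v$ and the plot $q$ were arbitrary, every plot of $Q$ lifts locally, so $g$ is a subduction.

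The main obstacle is the factorization of plots of the colimit through the cocone legs. This rests on the explicit computation of colimits in $\Dflg$ as $K$ applied to the pointwise presheaf colimit (concretization followed by the plus construction), and one must verify that neither concretization nor sheafification destroys the local factorization through a single index $p_{j_i}$. Once that is established the remaining steps are purely local, since subductions of euclidean spaces admit local sections by hypothesis, and the naturality identity then transports each local lift back up to $g$.
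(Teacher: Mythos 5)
Your proof is correct, but it takes a genuinely different route from the paper's. The paper's proof is a three-line categorical argument: each $y\eFun{\alpha}_U$ is a subduction, subductions in $\Dflg$ coincide with regular epimorphisms, $(\Lanyy\eFun{\alpha})_X$ is the colimit over the plot category $y \Comma X$ of the maps $y\eFun{\alpha}_U$, and colimits preserve regular epimorphisms (the colimit functor is a left adjoint, hence preserves coequalizer diagrams, and pointwise regular epis in the functor category are regular because strong epis in $\Dflg$ are effective). Your proof instead unpacks everything into the concrete diffeological picture: you characterize subductions by local lifting of plots, decompose the colimit as a coequalizer of coproducts to factor every plot of $(\Lanyy\eFun{G})X$ locally through a cocone leg $\sigma^G_p$, and then lift explicitly using local sections of the euclidean subductions $\eFun{\alpha}_U$, transporting the lift back via the naturality identity $g \circ \sigma^F_p = \sigma^G_p \circ y\eFun{\alpha}_U$. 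What the paper's approach buys is brevity and independence from any explicit description of colimits or of the diffeology on them; what yours buys is an elementary, geometrically transparent construction of the lifts, at the cost of invoking the equivalence between strong epimorphisms and the local-lifting characterization for arbitrary diffeological spaces, which the paper states only for maps of euclidean spaces (it is standard, but you are citing it rather than proving it). One remark: the obstacle you flag at the end --- that one must check concretization and the plus construction do not destroy the factorization through a single index --- is already disposed of by your own steps (i) and (ii): since the coequalizer map is a regular, hence strong, epimorphism, plots lift locally to the coproduct, and plots of a coproduct locally factor through single summands; no direct inspection of $K$ is needed.
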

\begin{proof}
Since $\eFun{\alpha}_U$ is a subduction, so is $y\eFun{\alpha}_U$. Subductions in $\Dflg$ are the same as regular epimorphisms, so that $y\eFun{\alpha}_U$ is a regular epimorphism for all $U \in \Eucl$. The left Kan extension $\alpha_X = (\Lanyy\eFun{\alpha})_X$ is given by the colimit over the category of plots $y \Comma X$. Since colimits preserve regular epimorphisms, $\alpha_X$ is a regular epimorphism, that is, a subduction.
\end{proof}


\begin{Proposition}
\label{prop:FpreservesProducts}
If a functor $\eFun{F}: \Eucl \to \Eucl$ preserves finite products, then so does $\Lanyy\eFun{F}: \Dflg \to \Dflg$.
\end{Proposition}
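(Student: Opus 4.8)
The plan is to combine the pointwise colimit formula~\eqref{eq:LanColim} with the fact that finite products in the cartesian closed category $\Dflg$ distribute over colimits, thereby reducing the statement to a finality property of plot categories. The terminal object is immediate: since $* = y(\bbR^0)$ is terminal in $\Dflg$ and $\eFun F$ preserves $\bbR^0$, Proposition~\ref{prop:LanRestrict} gives $(\Lanyy\eFun F)(*) \cong y\eFun F(\bbR^0) \cong *$. It therefore suffices to prove that for all $X, Y \in \Dflg$ the canonical comparison morphism $\theta \colon (\Lanyy\eFun F)(X \times Y) \to (\Lanyy\eFun F)(X) \times (\Lanyy\eFun F)(Y)$ induced by the two projections is an isomorphism.

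First I would rewrite the target. By~\eqref{eq:LanColim} we have $(\Lanyy\eFun F)(X) = \Colim_{yU \to X} y\eFun F U$ and likewise for $Y$, and since $- \times -$ preserves colimits separately in each variable,
\begin{equation*}
  (\Lanyy\eFun F)(X) \times (\Lanyy\eFun F)(Y)
  \cong \Colim_{(yU \to X,\, yV \to Y)} \bigl( y\eFun F U \times y\eFun F V \bigr),
\end{equation*}
the colimit running over $(y \Comma X) \times (y \Comma Y)$. Now $y$ preserves finite products (limits in $\Dflg$ are pointwise, so $y$ inherits product preservation from the Yoneda embedding) and $\eFun F$ preserves them by hypothesis, whence the integrand simplifies to $y\eFun F U \times y\eFun F V \cong y(\eFun F U \times \eFun F V) \cong y\eFun F(U \times V)$.

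The heart of the proof is a finality statement for the product functor
\begin{equation*}
  \Phi \colon (y \Comma X) \times (y \Comma Y) \longrightarrow y \Comma (X \times Y),
  \qquad
  \bigl( yU \xrightarrow{a} X,\ yV \xrightarrow{b} Y \bigr)
  \longmapsto \bigl( y(U \times V) \xrightarrow{a \times b} X \times Y \bigr).
\end{equation*}
Restricting the Kan-extension diagram $(yW \to X \times Y) \mapsto y\eFun F W$ along $\Phi$ returns precisely the integrand $y\eFun F(U \times V)$ from the previous step, so once $\Phi$ is shown to be final we obtain
\begin{equation*}
  \Colim_{(y\Comma X) \times (y\Comma Y)} y\eFun F(U \times V)
  \cong \Colim_{yW \to X \times Y} y\eFun F W
  = (\Lanyy\eFun F)(X \times Y),
\end{equation*}
and chaining the two displays shows $\theta$ is invertible. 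To prove finality I fix a plot $c = \bigl( yW \xrightarrow{(a,b)} X \times Y \bigr)$ and check that the comma category $c \Comma \Phi$ is nonempty and connected. The diagonal $\Delta_W \colon W \to W \times W$ is the crucial ingredient: taking $U = V = W$, the pair $\bigl((a,b), \Delta_W\bigr)$ is an object of $c \Comma \Phi$ because $(a \times b) \circ \Delta_W = (a,b)$, and for any object $\bigl((U,V), h \colon W \to U \times V\bigr)$ the morphism $(\pr_U \circ h, \pr_V \circ h)$ is an arrow of $c \Comma \Phi$ from $\bigl((a,b), \Delta_W\bigr)$ to it. Thus every object receives a morphism from the single object $\bigl((a,b), \Delta_W\bigr)$, so $c \Comma \Phi$ is connected.

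The main obstacle is exactly this finality of $\Phi$: it is the step where the special structure of products enters, and where the siftedness of plot categories is implicitly used, the diagonal $\Delta_W$ playing the role of the final diagonal of a sifted index category. The remaining point to verify is that the abstract isomorphism furnished by finality agrees with the canonical comparison map $\theta$; this is a routine naturality check, since every identification used --- distributivity of products over colimits, product preservation by $y$ and $\eFun F$, and the finality isomorphism --- is natural and compatible with the projections $X \times Y \to X$ and $X \times Y \to Y$. In contrast to the coproduct case, which forces $\eFun F$ to be a cosheaf, no hypothesis on $\eFun F$ beyond product preservation is required, because products (unlike coproducts) commute with the colimits computing the Kan extension.
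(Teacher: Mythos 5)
Your proof is correct, and it implements exactly the strategy the paper indicates for this proposition (the paper defers the detailed argument to the companion paper, saying only that it rests on the siftedness of plot categories): your finality argument for $\Phi$ via the diagonal $\Delta_W$ is precisely the concrete form of that siftedness, combined with the standard facts that products distribute over colimits in the cartesian closed category $\Dflg$, that $y$ and $\eFun{F}$ preserve finite products, and Proposition~\ref{prop:LanRestrict} for the terminal object. The final compatibility check identifying the abstract isomorphism with the canonical comparison map is, as you say, a routine naturality verification, so nothing further is needed.
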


Let $\eFun{F}: \calI \to \End(\Eucl)$, $i \mapsto \eFun{F}_i$ be a functor. Due to the universal properties of colimits and limits, we have for every $X \in \Dflg$ the natural morphism
\begin{equation*}
  \Colim_{yU \to X} \Lim_{i\in \calI} y\eFun{F}_i U
  \longrightarrow 
  \Lim_{i\in \calI} \Colim_{yU \to X} y\eFun{F}_i U 
  \,.
\end{equation*}
Assuming that the limit $\Lim_{i\in \calI} \eFun{F}_i$ exists in $\End(\Eucl)$, it can be written as the natural transformation
\begin{equation}
\label{eq:KanLimCommute}
  \Lanyy \Lim_i \eFun{F}_i \longrightarrow
  \Lim_i \Lanyy \eFun{F}_i
  \,,
\end{equation}
where we have used that $y$ preserves limits. This is not an isomorphism unless the colimit and the limit commute. 

Let $\eFun{G}: \calJ \to  \End(\Eucl)$ be another diagram, such that $\Lim_i \eFun{G}_i$ exists. Any natural transformation $\eFun{\alpha}_i: \eFun{F}_i \to \eFun{G}_i$ induces a commutative diagram
\begin{equation}
\label{eq:KanLimCommute2}
\begin{tikzcd}[column sep=large]
\Lanyy \Lim_i \eFun{F}_i 
\ar[r, "\Lanyy(\Lim_i \eFun{\alpha}_i)"]
\ar[d]
&
\Lanyy \Lim_i \eFun{G}_i
\ar[d]
\\
 \Lim_i \Lanyy \eFun{F}_i 
\ar[r, "\Lim_i \Lanyy\eFun{\alpha}_i"']
&
\Lim_i 
\Lanyy\eFun{G}_i
\end{tikzcd}
\end{equation}

\begin{Proposition}
\label{prop:LanyyProdFunc}
Let $\eFun{F}_1, \ldots, \eFun{F}_k \in \End(\Eucl)$ be a finite family of endofunctors. Then we have an isomorphism
\begin{equation*}
  \Lanyy(\eFun{F}_1 \times \ldots \times \eFun{F}_k)
  \cong
  \Lanyy\eFun{F}_1 \times \ldots \times \Lanyy\eFun{F}_k
  \,.
\end{equation*}
\end{Proposition}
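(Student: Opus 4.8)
The plan is to build the comparison morphism from the universal property of the product and then verify that it is a pointwise isomorphism. The projections $\pr_j \colon \eFun{F}_1 \times \ldots \times \eFun{F}_k \to \eFun{F}_j$ are natural transformations of endofunctors of $\Eucl$; applying the functor $\Lanyy$ of~\eqref{eq:KanExtFunctor} and using the universal property of the product in $\End(\Dflg)$, they assemble into a natural transformation
\begin{equation*}
  \Lanyy(\eFun{F}_1 \times \ldots \times \eFun{F}_k)
  \longrightarrow
  \Lanyy\eFun{F}_1 \times \ldots \times \Lanyy\eFun{F}_k
  \,.
\end{equation*}
By the pointwise formula~\eqref{eq:LanColim}, checking that this is an isomorphism reduces to showing that for every $X \in \Dflg$ the canonical map
\begin{equation*}
  \Colim_{yU \to X} y(\eFun{F}_1 U \times \ldots \times \eFun{F}_k U)
  \longrightarrow
  \Bigl( \Colim_{yU \to X} y\eFun{F}_1 U \Bigr)
  \times \ldots \times
  \Bigl( \Colim_{yU \to X} y\eFun{F}_k U \Bigr)
\end{equation*}
is an isomorphism. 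Since $y$ preserves finite products (limits in $\Dflg$ are pointwise and $I$ is full, faithful, and limit-preserving), the source is the colimit of the pointwise products $y\eFun{F}_1 U \times \ldots \times y\eFun{F}_k U$.

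The core assertion is therefore that the colimit over the category of plots $y \Comma X$ commutes with finite products in $\Dflg$, which I would establish from two inputs. First, $\Dflg$ is cartesian closed (Proposition~\ref{prop:ConcSheafProperties}, with exponentials the mapping spaces of Example~\ref{ex:Diffeologies}), so each product functor $\Empty \times Y$ has a right adjoint and hence preserves all colimits. Second, the category of plots $y \Comma X$ is sifted, which means precisely that its diagonal $\Delta \colon (y \Comma X) \to (y \Comma X) \times (y \Comma X)$ is final. I would treat the binary case and induct: writing $D_j$ for the diagram $(yU \to X) \mapsto y\eFun{F}_j U$ and distributing products over colimits in each variable separately, one obtains
\begin{equation*}
  \Bigl( \Colim_{i} D_1(i) \Bigr) \times \Bigl( \Colim_{j} D_2(j) \Bigr)
  \cong
  \Colim_{(i,j)} \bigl( D_1(i) \times D_2(j) \bigr)
  \,,
\end{equation*}
a colimit over $(y \Comma X) \times (y \Comma X)$; finality of $\Delta$ then identifies this with $\Colim_{i} \bigl( D_1(i) \times D_2(i) \bigr)$, which is the source above.

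The step requiring care is the interplay between how colimits and products are computed in $\Dflg$. Colimits here are \emph{not} computed in $\Set^{\Eucl^\op}$: one first takes the presheaf colimit and then applies the reflector $K$ of~\eqref{eq:PreshDflgAdjunction}, and since $K$ is only a left adjoint it need not preserve the pointwise products. Thus one cannot prove the commutation by passing to presheaves, where siftedness would give the $\Set$-level statement, and then pushing forward along $K$; this is exactly the kind of trap flagged in Remark~\ref{not:SubcatIdentify}. The argument above avoids it by working entirely inside $\Dflg$, using cartesian closedness to distribute products over $\Dflg$-colimits so that siftedness (finality of the diagonal) is the only external input — the same mechanism underlying Proposition~\ref{prop:FpreservesProducts}. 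The one genuinely new bookkeeping step is to confirm that the morphism obtained from the universal property of the product coincides with this canonical sifted-colimit interchange, which is a routine diagram chase using the naturality of the colimit cocone in~\eqref{eq:LanColim}.
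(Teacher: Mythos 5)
Your proof is correct and takes essentially the route the paper intends: the paper states this proposition without proof here (deferring details to the companion paper \cite{Blohmann:Elastic}), but explicitly attributes it to the siftedness of the category of plots, which is exactly the engine of your argument. Distributing finite products over colimits via cartesian closedness of $\Dflg$ and then invoking finality of the diagonal is the standard way to run that siftedness argument internally to $\Dflg$, so your pointwise comparison map is indeed an isomorphism and the reduction steps (pointwise formula for $\Lanyy$, $y$ preserving finite products, induction on the number of factors) are all sound.
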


It is well-known, that the left Kan extension of an arbitrary functor along the Yoneda embedding $Y:\Eucl \to \Set^{\Eucl^\op}$ preserves all colimits. This is not true for Kan extensions along $y: \Eucl \to \Dflg$. Already for the preservation of coproducts we have to make additional assumptions. Recall that a functor $\eFun{F}: \Eucl \to \calC$ is a \textbf{cosheaf} if $F^\op: \Eucl^\op \to \calC^\op$ is a sheaf.

\begin{Example}
The following functors on $\Eucl$ are cosheaves:
\begin{itemize}

\item[(a)] the tangent functor $\eFun{T}: \Eucl \to \Eucl$;

\item[(b)] fiber products of the tangent functor $\eFun{T}_k: \Eucl \to \Eucl$;

\item[(c)] if $\eFun{F}: \Eucl \to \calC$ and $\eFun{G}: \Eucl \to \Eucl$ are cosheaves, then so is their composition $\eFun{F}\eFun{G}$;

\item[(d)] the de Rham functor $\eFun{\Omega}: \Eucl \to \dgAlg^\op$, which maps $U$ to the differential graded algebra of differential forms and smooth maps to the pullback of forms;

\item[(d)] if $F:\Mfld^\op \to \calC$ is a sheaf on the big site of manifolds and open covers, then the restriction $\eFun{F}: \Eucl \hookrightarrow \Mfld \to \calC^\op$ is a cosheaf;

\end{itemize}
\end{Example}

\begin{Proposition}
\label{prop:KanCoproducts}
If $F: \Eucl \to \calC$ is a cosheaf, then its left Kan extension along $y: \Eucl \to \Dflg$ preserves coproducts.
\end{Proposition}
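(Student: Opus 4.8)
The plan is to compute both sides via the pointwise colimit formula \eqref{eq:LanColim}, $(\Lanyy F)(Y) \cong \Colim_{yU \to Y} FU$, and to compare the category of plots of a coproduct $X := \coprod_{j \in J} X_j$ with the categories of plots of its summands. The essential geometric input is that a plot of a coproduct splits into pieces landing in the individual summands; the essential categorical input is that the cosheaf property converts such a splitting into a coproduct decomposition of $FU$ inside $\calC$. Once these two facts are in place, the proof consists of writing down two comparison morphisms and checking, by routine manipulation of colimit cocones, that they are mutually inverse.

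First I would record the decomposition of plots. Since the functor of points preserves colimits (Proposition~\ref{prop:ConcSheafProperties}), the underlying set of $X$ is the disjoint union $\coprod_j |X_j|$, and coproducts in $\Dflg$ are disjoint. Hence every plot $p \colon U \to X$ has preimages $U_j := p^{-1}(|X_j|)$ forming a partition of $|U|$ into pairwise disjoint \emph{open} subsets (the defining property of the coproduct diffeology), on each of which $p$ restricts to a plot $p_j \colon U_j \to X_j$; conversely such compatible families glue to a unique plot by the sheaf condition for $X$. Moreover a morphism of plots $f \colon (U,p) \to (V,p')$ in $y \Comma X$ satisfies $p' \circ f = p$, so it carries each $U_j$ into $V_j$ and restricts to morphisms $f_j \colon (U_j,p_j) \to (V_j,p'_j)$ in $y \Comma X_j$.

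Next I would feed this into the cosheaf property. The family $\{U_j \hookrightarrow U\}$ is an open cover with empty pairwise intersections, and applying the cosheaf axiom to the empty cover gives $F(\emptyset) \cong \emptyset_{\calC}$, the initial object; descent along $\{U_j\}$ then yields a natural isomorphism $FU \cong \coprod_j FU_j$ whose structure maps are the images under $F$ of the inclusions $U_j \hookrightarrow U$. Naturality means precisely that under the decomposition above, $Ff$ is identified with $\coprod_j Ff_j$. With this in hand I would define the comparison morphisms: the inclusion functors $\iota_j \colon (y \Comma X_j) \to (y \Comma X)$ induce maps $(\Lanyy F)(X_j) \to (\Lanyy F)(X)$, and by the universal property of the coproduct a morphism $\Phi \colon \coprod_j (\Lanyy F)(X_j) \to (\Lanyy F)(X)$. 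In the other direction, for each plot $(U,p)$ I would assemble the morphism $FU \cong \coprod_j FU_j \to \coprod_j (\Lanyy F)(X_j)$ built from the colimit legs for the diagrams computing the $(\Lanyy F)(X_j)$ followed by the coproduct inclusions; the naturality just noted shows that these assemble into a cocone under $\{FU\}$, inducing $\Psi \colon (\Lanyy F)(X) \to \coprod_j (\Lanyy F)(X_j)$.

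Finally I would verify $\Phi \circ \Psi = \id$ and $\Psi \circ \Phi = \id$ by precomposing with colimit legs. The relation $\Psi \circ \Phi = \id$ is immediate, because $\iota_j$ sends a plot of $X_j$ to a plot of $X$ whose only nonempty piece is the $j$-th one. The relation $\Phi \circ \Psi = \id$ reduces, leg by leg, to the cocone identity for $(\Lanyy F)(X)$ applied to the inclusions $U_j \hookrightarrow U$, using that $F(U_j \hookrightarrow U)$ is exactly the $j$-th coproduct inclusion of the decomposition $FU \cong \coprod_j FU_j$. I expect the only genuine obstacle to be the structural step: establishing that the $U_j$ are open and that cosheaf descent produces the \emph{natural} coproduct decomposition $FU \cong \coprod_j FU_j$, including the bookkeeping for covers $\{U_j\}$ with infinitely many nonempty pieces. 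Granting that decomposition, the remainder is formal colimit manipulation.
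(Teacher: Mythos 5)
The paper states Proposition~\ref{prop:KanCoproducts} without proof (its proof is deferred to the companion paper \cite{Blohmann:Elastic}), so there is no in-paper argument to compare against; judged on its own terms, your proof is correct and complete in outline. The two load-bearing steps are exactly right: (1) a plot $p\colon U \to \coprod_j X_j$ decomposes into plots $p_j \colon U_j \to X_j$ on the disjoint open subsets $U_j = p^{-1}(|X_j|)$, and morphisms of plots respect this decomposition; (2) the cosheaf condition applied to the pairwise-disjoint open cover $\{U_j \hookrightarrow U\}$, together with $F(\emptyset) \cong \emptyset_{\calC}$ from the empty cover, collapses the descent coequalizer to the coproduct, giving a decomposition $FU \cong \coprod_j FU_j$ that is natural in $(U,p)$. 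Your verification that $\Phi$ and $\Psi$ are mutually inverse is then routine cocone bookkeeping, with the one genuinely needed observation being that the maps $F(U_j \hookrightarrow U)$ are the coproduct injections of that decomposition and hence jointly epimorphic. Two minor points to tighten: the statement implicitly requires $\calC$ to be cocomplete (or at least to have the relevant colimits) so that the pointwise formula $(\Lan_y F)(X) \cong \Colim_{yU \to X} FU$ and the coproducts $\coprod_j FU_j$, $\coprod_j (\Lan_y F)(X_j)$ exist; and since $F$ here has general codomain $\calC$, the extension is $\Lan_y F$ rather than the endofunctor construction $\Lanyy F = \Lan_y(yF)$ of~\eqref{eq:LanyyDef1}, a notational slip that does not affect the argument.
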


\begin{Corollary}
\label{cor:KanCoproducts2}
If $\eFun{F}: \Eucl \to \Eucl$ is a cosheaf, then $\Lanyy\eFun{F}$ preserves coproducts.    
\end{Corollary}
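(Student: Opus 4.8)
The plan is to deduce the corollary as an instance of Proposition~\ref{prop:KanCoproducts}. By definition $\Lanyy\eFun{F} = \Lan_y(y\eFun{F})$ is the left Kan extension along $y$ of the functor $y\eFun{F}\colon \Eucl \to \Dflg$, whose codomain is the cocomplete category $\Dflg$ (Theorem~\ref{thm:ConcQuasiCat}), so the Kan extension exists and is pointwise. Proposition~\ref{prop:KanCoproducts} applies to any cosheaf valued in a cocomplete category, so it suffices to verify that $y\eFun{F}$ is a cosheaf with values in $\Dflg$. Note that one cannot instead apply the proposition with target $\Eucl$: since $\Eucl$ is not cocomplete, $\Lan_y\eFun{F}$ need not exist, and composing with $y$ first is essential.

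The key step I would carry out is to show that $y\colon \Eucl \to \Dflg$ is itself a cosheaf. Unwinding Definition, this means that $y^\op\colon \Eucl^\op \to \Dflg^\op$ is a sheaf, i.e.\ that for every open cover $\{U_i \to U\}$ the representable $yU$ is the coequalizer of $\coprod_{i,j} y(U_i \cap U_j) \rightrightarrows \coprod_i yU_i$ in $\Dflg$. This is precisely the subcanonicity of the site of euclidean spaces recorded in Section~\ref{sec:DflgSpaces}. The one point requiring care is that this colimit must be computed in $\Dflg$ rather than in all sheaves: since colimits in $\Dflg$ are obtained by applying the reflector $K$ to the colimit taken in presheaves, and since the associated concrete sheaf of the presheaf-level \v{C}ech coequalizer of the representables $YU_i$ is the representable $YU$, which is already concrete and hence fixed by $K$, we obtain $yU \cong \Colim_{\Dflg}$ of the \v{C}ech diagram. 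Thus $y$ is a cosheaf.

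With this in hand the remainder is formal. Since $\eFun{F}\colon \Eucl \to \Eucl$ is a cosheaf by hypothesis and $y\colon \Eucl \to \Dflg$ is a cosheaf, their composite $y\eFun{F}\colon \Eucl \to \Dflg$ is a cosheaf by the closure of cosheaves under composition, namely item (c) of the Example preceding Proposition~\ref{prop:KanCoproducts} applied with $\calC = \Dflg$. Applying Proposition~\ref{prop:KanCoproducts} to the cosheaf $y\eFun{F}$ then gives that $\Lan_y(y\eFun{F}) = \Lanyy\eFun{F}$ preserves coproducts, which is the assertion.

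I expect the main obstacle to be exactly the verification that $y$ is a cosheaf at the level of $\Dflg$, that is, that the \v{C}ech coequalizer of representables computed in concrete sheaves reproduces $yU$; everything else reduces to a direct appeal to results already established, namely Proposition~\ref{prop:KanCoproducts} and the composition property of cosheaves.
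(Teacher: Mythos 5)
Your proof is correct and follows essentially the route the paper intends: the corollary is deduced directly from Proposition~\ref{prop:KanCoproducts}, the only bridging step being that $y\eFun{F}$ is a $\Dflg$-valued cosheaf, which you supply via subcanonicity of the site (so $y$ itself is a cosheaf, the \v{C}ech coequalizer of representables in $\Dflg$ being $yU$) together with closure of cosheaves under composition, i.e.\ item~(c) of the paper's Example. The paper leaves this deduction implicit, and your identification of the one non-formal point --- that the concrete-sheaf colimit of the \v{C}ech diagram really is $yU$, not just the colimit in presheaves or in all sheaves --- is exactly the care the argument requires.
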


\begin{Proposition}
\label{prop:KanStrongEpis}
If $\eFun{F}: \Eucl \to \Eucl$ is a cosheaf, then $\Lanyy\eFun{F}$ preserves subductions.
\end{Proposition}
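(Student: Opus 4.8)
The plan is to verify directly that, for a subduction $f: X \to Y$ in $\Dflg$, the morphism $(\Lanyy\eFun{F})(f): (\Lanyy\eFun{F})X \to (\Lanyy\eFun{F})Y$ is surjective on underlying points and carries the quotient diffeology, which are precisely the two conditions defining a subduction. Throughout I would work with the pointwise colimit formula~\eqref{eq:LanColim}, writing $\iota^X_p: y\eFun{F}U \to (\Lanyy\eFun{F})X$ and $\iota^Y_q: y\eFun{F}V \to (\Lanyy\eFun{F})Y$ for the colimit coprojections attached to plots $p: yU \to X$ and $q: yV \to Y$. Two compatibilities are used repeatedly: functoriality of $\Lanyy\eFun{F}$ gives $(\Lanyy\eFun{F})(f)\circ \iota^X_p = \iota^Y_{f\circ p}$, since $(\Lanyy\eFun{F})(f)$ is induced by reindexing plots along $p \mapsto f\circ p$; and compatibility of the coprojections with the diagram gives $\iota^Y_q \circ y\eFun{F}(g) = \iota^Y_{q\circ yg}$ for every $g: V' \to V$ in $\Eucl$.

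First I would reduce to the coprojections. By Proposition~\ref{prop:ColimRep} and Corollary~\ref{cor:KanCoproducts2}, the canonical map $\coprod_{yV \to Y} y\eFun{F}V \to (\Lanyy\eFun{F})Y$ from the coproduct of the objects of the diagram to its colimit is a subduction, being the regular epimorphism exhibiting the colimit as a coequalizer of a coproduct. Hence every plot of $(\Lanyy\eFun{F})Y$ locally lifts through this map, and since a plot from a (locally connected) domain into a coproduct lands in a single summand, it follows that every plot of $(\Lanyy\eFun{F})Y$ locally takes the form $\iota^Y_q \circ R$ for a single coprojection $\iota^Y_q$ and a smooth $R: W_0 \to \eFun{F}V$. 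It therefore suffices to lift each such $\iota^Y_q \circ R$ locally along $(\Lanyy\eFun{F})(f)$, and separately to check surjectivity on points.

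For surjectivity, recall from Proposition~\ref{prop:ConcSheafProperties} that the functor of points preserves colimits, so a point of $(\Lanyy\eFun{F})Y$ is a class $[q,\xi]$ with $q: V \to Y$ a plot and $\xi \in |\eFun{F}V|$. Since $f$ is a subduction there is an open cover $\{V_i \hookrightarrow V\}$ with inclusions $j_i: V_i \hookrightarrow V$ and plots $p_i: yV_i \to X$ satisfying $f\circ p_i = q\circ yj_i$. Because $\eFun{F}$ is a cosheaf, the structure maps $\eFun{F}(j_i)$ are jointly surjective, so $\xi = \eFun{F}(j_i)(\xi_i)$ for some $i$ and some $\xi_i \in |\eFun{F}V_i|$; the colimit relation then yields $[q,\xi] = [q\circ yj_i,\xi_i]$, and $[p_i,\xi_i] \in (\Lanyy\eFun{F})X$ maps onto it. For the local lift of $\iota^Y_q\circ R$ near $w \in W_0$, set $\xi = R(w)$ and choose as above a cover $\{V_i\}$ over whose members $q$ lifts to $p_i$. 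Granting that the maps $\eFun{F}(j_i): \eFun{F}V_i \to \eFun{F}V$ form an open cover of $\eFun{F}V$, the point $\xi$ lies in one of these opens, so on a neighbourhood $W_1 \ni w$ the map $R$ factors smoothly as $R = \eFun{F}(j_i)\circ R_i$ with $R_i: W_1 \to \eFun{F}V_i$. Then
\begin{equation*}
  \iota^Y_q \circ R|_{W_1}
  = \iota^Y_q \circ y\eFun{F}(j_i) \circ R_i
  = \iota^Y_{q\circ yj_i} \circ R_i
  = (\Lanyy\eFun{F})(f)\circ \iota^X_{p_i} \circ R_i
  \,,
\end{equation*}
which exhibits $\iota^X_{p_i}\circ R_i: W_1 \to (\Lanyy\eFun{F})X$ as the desired local lift. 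Combining the two verifications shows $(\Lanyy\eFun{F})(f)$ is a subduction.

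The main obstacle is the geometric input used in the last step: that a cosheaf $\eFun{F}$ sends an open cover $\{V_i \hookrightarrow V\}$ to a family $\{\eFun{F}(j_i): \eFun{F}V_i \to \eFun{F}V\}$ of open embeddings whose images cover $\eFun{F}V$. Joint surjectivity is immediate, since the cosheaf condition presents $\eFun{F}V$ as the colimit of the \v{C}ech diagram of the cover, whence the map from $\coprod \eFun{F}V_i$ is a regular epimorphism. The substantive point is that each $\eFun{F}(j_i)$ is an \emph{open embedding}; I would deduce this from the cosheaf gluing condition applied to two-element covers $\{V_0,V_1\}$, which presents $\eFun{F}V$ as a pushout of $\eFun{F}V_0 \leftarrow \eFun{F}(V_0\cap V_1) \to \eFun{F}V_1$ computed inside $\Eucl$, forcing the legs into $\eFun{F}V$ to be open embeddings. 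For the cosheaves that matter here, namely $\eFun{T}$, its fibre products $\eFun{T}_k$, and their composites, this is transparent because $\eFun{F}(j_i)$ is literally $V_i \times \bbR^N \hookrightarrow V \times \bbR^N$; the only real work is establishing it for a general cosheaf.
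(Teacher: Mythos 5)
Your overall strategy (reduce to colimit coprojections, then check surjectivity on points and local lifting of plots separately) is reasonable, and the first two steps are sound; but the proof breaks at exactly the point you flag as "the only real work", and the repair you sketch there does not hold up. The claim that a cosheaf $\eFun{F}: \Eucl \to \Eucl$ sends an open cover $\{j_i: V_i \hookrightarrow V\}$ to a family of \emph{open embeddings} $\eFun{F}(j_i)$ covering $\eFun{F}V$ is false for general cosheaves, and the pushout argument cannot establish it, because legs of pushouts in $\Eucl$ (equivalently in $\Mfld$) need not be monomorphisms. Concretely, consider $\eFun{F}(U) \cong \pi_0(U) \times \bbR$, a disjoint union of one line per connected component of $U$ (realizable as a functor into $\Eucl$ by choosing identifications of these countable disjoint unions of lines with open subsets of $\bbR$). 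Since $\pi_0$ is a cosheaf of sets and the \v{C}ech coequalizers here are computed by surjective local diffeomorphisms, this is a cosheaf $\Eucl \to \Eucl$; yet for $V_0 = (0,1)\cup(2,3)$ inside $V = (0,3)$ the structure map $\eFun{F}(j_0): \bbR \sqcup \bbR \to \bbR$ is a fold, not injective — and the corresponding \v{C}ech pushout has this fold as a leg, refuting the claim that the pushout presentation "forces the legs to be open embeddings". (This example also shows that being a cosheaf is strictly weaker than being a local bundle in the sense of Definition~\ref{def:LocalBundle}; Proposition~\ref{prop:LocBundCosheaf} has no converse.) What your lifting step actually requires is weaker: that the family $\{\eFun{F}(j_i)\}$ is jointly subductive, i.e.\ every smooth map $W_0 \to \eFun{F}V$ locally factors through some $\eFun{F}(j_i)$. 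That weaker statement survives the example above (a fold is a surjective local diffeomorphism), but your argument does not prove it: the coequalizer presentation only makes $\coprod_i \eFun{F}V_i \to \eFun{F}V$ a regular epimorphism of euclidean spaces, which is surjective but is not thereby shown to admit local smooth sections. So there is a genuine gap.

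Note that the lemma can be bypassed entirely by working one categorical level up, using results the paper already provides. In $\Dflg$ subductions are the strong epimorphisms, which are effective (Proposition~\ref{prop:ConcSheafProperties}), so a subduction $f: X \to Y$ exhibits $Y$ as the coequalizer of its kernel pair $X \times_Y X \rightrightarrows X$. Applying Proposition~\ref{prop:ColimSubduction} to this two-object diagram, the comparison morphism
\begin{equation*}
  \Colim\bigl[(\Lanyy\eFun{F})(X \times_Y X) \rightrightarrows (\Lanyy\eFun{F})X\bigr]
  \longrightarrow (\Lanyy\eFun{F})Y
\end{equation*}
is a subduction; precomposing with the coequalizer projection $(\Lanyy\eFun{F})X \to \Colim[\cdots]$, which is a regular (hence strong) epimorphism, yields precisely $(\Lanyy\eFun{F})(f)$, which is therefore a composite of subductions and hence a subduction. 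Since the paper defers its own proof of Proposition~\ref{prop:KanStrongEpis} to the companion paper, a direct comparison is not possible, but whichever route is taken there, your geometric lemma in its stated form cannot be an ingredient.
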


\begin{Proposition}
\label{prop:ColimSubduction}
Let $X:\calI \to \Dflg$ be a functor. If $\eFun{F}: \Eucl \to \Eucl$ is a cosheaf, then the natural morphism
\begin{equation*}
  \Colim (\Lanyy F)X \longrightarrow
  (\Lanyy F)\Colim X
\end{equation*}
is a subduction.
\end{Proposition}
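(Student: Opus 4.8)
The plan is to reduce the claim to the two preservation properties already available, namely that $\Lanyy F$ preserves coproducts (Corollary~\ref{cor:KanCoproducts2}) and subductions (Proposition~\ref{prop:KanStrongEpis}), by presenting the colimit of $X$ as a canonical regular quotient of a coproduct and then propagating subductivity through $\Lanyy F$.

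First I would recall the standard presentation of a colimit in a cocomplete category. Writing $\sigma_i\colon X_i \to \Colim X$ for the structure morphisms, the induced morphism
\begin{equation*}
  q\colon \coprod_{i \in \calI} X_i \longrightarrow \Colim X
\end{equation*}
is the coequalizer of the two evident morphisms from the coproduct indexed by the arrows of $\calI$ to $\coprod_i X_i$, hence a regular epimorphism. Since regular epimorphisms in $\Dflg$ coincide with subductions, $q$ is a subduction, and Proposition~\ref{prop:KanStrongEpis} then shows that $(\Lanyy F)(q)$ is again a subduction.

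Next I would splice in the coproduct comparison. Let $c_i\colon X_i \to \coprod_j X_j$ denote the coproduct inclusions, let $\rho\colon \coprod_i (\Lanyy F)(X_i) \to (\Lanyy F)(\coprod_j X_j)$ be the canonical comparison morphism, which is an isomorphism by Corollary~\ref{cor:KanCoproducts2}, and let $p\colon \coprod_i (\Lanyy F)(X_i) \to \Colim (\Lanyy F)X$ be the canonical morphism into the colimit. The one bookkeeping step is to verify the identity
\begin{equation*}
  \theta \circ p = (\Lanyy F)(q) \circ \rho,
\end{equation*}
where $\theta$ is the natural morphism of the statement. This follows by comparing restrictions along each inclusion $(\Lanyy F)(X_i) \to \coprod_j (\Lanyy F)(X_j)$: both sides restrict to $(\Lanyy F)(\sigma_i) = (\Lanyy F)(q \circ c_i)$, so they agree by the universal property of the coproduct.

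Finally I would conclude by a cancellation argument for strong epimorphisms. Since $\rho$ is an isomorphism and $(\Lanyy F)(q)$ a subduction, the composite $\theta \circ p = (\Lanyy F)(q) \circ \rho$ is a subduction. In any category, if $g \circ f$ is a strong epimorphism then so is $g$: the morphism $g$ is certainly epic, and given a monomorphism $m$ together with morphisms $u,v$ satisfying $m \circ u = v \circ g$, a diagonal $w$ for the square over $g \circ f$ obtained by precomposing with $f$ satisfies $m \circ (w \circ g) = v \circ g = m \circ u$, whence $w \circ g = u$ because $m$ is monic, so $w$ is the required diagonal for the square over $g$. Taking $g = \theta$ and $f = p$ shows that $\theta$ is a subduction. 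The only point requiring care is the identification $\theta \circ p = (\Lanyy F)(q) \circ \rho$, which is purely a disentangling of the universal properties of the colimit defining $\theta$ and of the coproduct; all the substantive content is carried by the coproduct- and subduction-preservation of $\Lanyy F$, which in turn rest on $F$ being a cosheaf.
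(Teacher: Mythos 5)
Your argument is correct. A direct comparison with the paper is not possible here: Proposition~\ref{prop:ColimSubduction} is one of the results this paper states \emph{without} proof, deferring to \cite{Blohmann:Elastic}, so all one can check is internal consistency --- and on that score your proof is a clean derivation from exactly the two ingredients the paper supplies earlier in the same subsection, Corollary~\ref{cor:KanCoproducts2} and Proposition~\ref{prop:KanStrongEpis}, so the logical ordering is respected and there is no circularity within the paper's narrative. Each step checks out: the canonical map $q\colon \coprod_i X_i \to \Colim X$ is the coequalizer of the pair indexed by the arrows of $\calI$, hence a regular epimorphism, and the identification of regular epimorphisms with subductions is the same fact the paper itself invokes in the proof of Proposition~\ref{prop:KanPreserveSubduc}; the identity $\theta \circ p = (\Lanyy \eFun{F})(q) \circ \rho$ follows, as you say, by testing against the coproduct inclusions, since both sides restrict to $(\Lanyy \eFun{F})(\sigma_i)$; and the cancellation lemma (if $g \circ f$ is a strong epimorphism, then so is $g$) is standard, with your lifting argument --- using monicity of $m$ to upgrade $w \circ g \circ f = u \circ f$ to $w \circ g = u$ --- being complete. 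Two minor remarks: the argument tacitly assumes $\calI$ is small, so that the coproducts indexed by its objects and arrows exist in $\Dflg$ (Theorem~\ref{thm:ConcQuasiCat} guarantees small colimits only), which is implicit in the statement anyway; and in step (iii) you could shortcut slightly by noting that strong epimorphisms compose with isomorphisms and are left-cancellable, but your explicit verification is preferable in a self-contained write-up.
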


\subsection{Compatibility with the composition of endofunctors}

Let $\eFun{F},\eFun{G}: \Eucl \to \Eucl$ be endofunctors. The left Kan extension of their product is given by the colimit
\begin{equation}
\label{eq:KanProdIso1}
\begin{split}
  \Lanyy (\eFun{F} \eFun{G})X
  &\cong
  \Colim_{yU \to X} y \eFun{F}\eFun{G} U
  \\
  &\cong
  \Colim_{yU \to X}\, (\Lanyy\eFun{F}) y\eFun{G} U
  \,,
\end{split}
\end{equation}
where we have used Proposition~\ref{prop:LanRestrict}. The product of the Kan extensions can be written as
\begin{equation}
\label{eq:KanProdIso2}
  (\Lanyy \eFun{F})(\Lanyy \eFun{G})X
  \cong
  (\Lanyy\eFun{F}) \Colim_{yU \to X} y\eFun{G} U
  \,.
\end{equation}
Due to the universal property of the colimit, we have a natural morphism
\begin{equation}
\label{eq:KanProdMorph1}
  \Colim_{yU \to X}\, (\Lanyy\eFun{F}) y\eFun{G} U
  \longrightarrow
  (\Lanyy\eFun{F}) \Colim_{yU \to X} y\eFun{G} U
  \,.
\end{equation}
Composing this morphism with the isomorphism~\eqref{eq:KanProdIso1} on its domain and with isomorphism~\eqref{eq:KanProdIso2} on the codomain, we obtain a natural morphism
\begin{equation}
\label{eq:yKanMonoid}
  \mu_{\eFun{F},\eFun{G}}:
  \Lanyy(\eFun{F}\eFun{G} )X
  \longrightarrow 
  (\Lanyy\eFun{F}) (\Lanyy\eFun{G})X
  \,.
\end{equation}
This is an isomorphism if and only if~\eqref{eq:KanProdMorph1} is, which is generally not the case. 

\begin{Proposition}
\label{prop:KanProdSubduction}
If $\eFun{F}$ is a cosheaf, then~\eqref{eq:yKanMonoid} is a subduction for all $X \in \Dflg$.
\end{Proposition}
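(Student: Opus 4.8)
The plan is to reduce the statement to Proposition~\ref{prop:ColimSubduction}, which already encodes exactly the interchange of $\Lanyy\eFun{F}$ with a colimit that is at issue here. First I would observe that, by construction, the morphism $\mu_{\eFun{F},\eFun{G}}$ of~\eqref{eq:yKanMonoid} is obtained from the natural morphism~\eqref{eq:KanProdMorph1} by precomposing with the isomorphism~\eqref{eq:KanProdIso1} and postcomposing with the isomorphism~\eqref{eq:KanProdIso2}; this is noted just after~\eqref{eq:yKanMonoid}, where it is recorded that $\mu_{\eFun{F},\eFun{G}}$ is an isomorphism if and only if~\eqref{eq:KanProdMorph1} is. Since isomorphisms are subductions and subductions (being strong epimorphisms) are closed under composition, it suffices to prove that~\eqref{eq:KanProdMorph1}, namely
\begin{equation*}
  \Colim_{yU \to X}\, (\Lanyy\eFun{F}) y\eFun{G} U
  \longrightarrow
  (\Lanyy\eFun{F}) \Colim_{yU \to X} y\eFun{G} U
  \,,
\end{equation*}
is a subduction.

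Next I would recognize this morphism as an instance of the comparison morphism treated in Proposition~\ref{prop:ColimSubduction}. Take the indexing category $\calI := y \Comma X$, the (small) category of plots of $X$, and let $D \colon \calI \to \Dflg$ be the functor sending a plot $(yU \to X)$ to $y\eFun{G} U$, that is, the composite $y \Comma X \to \Eucl \xrightarrow{\,y\eFun{G}\,} \Dflg$. By the pointwise formula~\eqref{eq:LanColim} its colimit is $\Colim_\calI D \cong (\Lanyy\eFun{G})X$. Under this identification the displayed morphism is precisely the natural comparison morphism
\begin{equation*}
  \Colim_\calI (\Lanyy\eFun{F}) D
  \longrightarrow
  (\Lanyy\eFun{F}) \Colim_\calI D
\end{equation*}
of Proposition~\ref{prop:ColimSubduction}. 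As $\eFun{F}$ is assumed to be a cosheaf, that proposition applies directly and shows this comparison morphism is a subduction. Combining the two steps, $\mu_{\eFun{F},\eFun{G}}$ is the composite of a subduction with two isomorphisms, hence a subduction.

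The essential analytic and categorical content has thus been pushed entirely into Proposition~\ref{prop:ColimSubduction}, so at this level the argument is formal and I do not expect a genuine obstacle. The one point that still requires care is verifying that the two natural morphisms genuinely coincide: one must check that the universal morphism induced by the colimit cocone in~\eqref{eq:KanProdMorph1} agrees with the comparison morphism built from the cocone $(\Lanyy\eFun{F})D_i \to (\Lanyy\eFun{F})\Colim_\calI D$. This is a routine diagram chase using the universal property of the colimit together with the identification of $\Colim_{yU\to X} y\eFun{G}U$ with $(\Lanyy\eFun{G})X$ from~\eqref{eq:LanColim}, and it presents no real difficulty once the indexing diagram $D$ is set up as above.
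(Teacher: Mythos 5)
Your proposal is correct and takes essentially the same route as the paper: the paper's proof likewise applies Proposition~\ref{prop:ColimSubduction} (with the indexing category of plots $y \Comma X$ and the diagram $U \mapsto y\eFun{G}U$, left implicit there) to conclude that~\eqref{eq:KanProdMorph1} is a subduction, and hence that~\eqref{eq:yKanMonoid} is one as well, being its composite with isomorphisms. Your write-up merely makes explicit the instantiation and the identification via~\eqref{eq:LanColim} that the paper leaves to the reader.
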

\begin{proof}
Since $\eFun{F}$ is a cosheaf, it follows from Proposition~\ref{prop:ColimSubduction} that \eqref{eq:KanProdMorph1}
is a subduction, which implies that~\eqref{eq:yKanMonoid} is a subduction.
\end{proof}

The morphism~\eqref{eq:yKanMonoid} is associative in the followoing sense.

\begin{Proposition}
\label{prop:KanAssoc}
Let $\eFun{F},\eFun{G}, \eFun{H}: \Eucl \to \Eucl$ be endofunctors. Then the diagram of natural transformations
\begin{equation}
\label{eq:KanAssoc0}
\begin{tikzcd}[column sep=large]
\Lanyy(\eFun{F} \eFun{G} \eFun{H}) 
\ar[r, "\mu_{\eFun{F},\eFun{G}\eFun{H}}"] 
\ar[d, "\mu_{\eFun{F}\eFun{G},\eFun{H}}"']
&
(\Lanyy\eFun{F}) (\Lanyy\eFun{G} \eFun{H}) 
\ar[d, "(\Lanyy\eFun{F})\mu_{\eFun{G},\eFun{H}}"]
\\
\Lanyy(\eFun{F} \eFun{G}) (\Lanyy\eFun{H}) 
\ar[r, "\mu_{\eFun{F},\eFun{G}}(\Lanyy\eFun{H})"']
&
(\Lanyy\eFun{F}) (\Lanyy\eFun{G})(\Lanyy\eFun{H}) 
\end{tikzcd}    
\end{equation}
is commutative.
\end{Proposition}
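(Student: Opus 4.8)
The plan is to exploit the universal property of the left Kan extension rather than to unwind the colimits by hand. Both composites in~\eqref{eq:KanAssoc0} are natural transformations with common domain $\Lanyy(\eFun{F}\eFun{G}\eFun{H}) = \Lan_y\, y\eFun{F}\eFun{G}\eFun{H}$ and common codomain $(\Lanyy\eFun{F})(\Lanyy\eFun{G})(\Lanyy\eFun{H})$. By the defining universal property of $\Lan_y\, y\eFun{F}\eFun{G}\eFun{H}$, for every functor $K: \Dflg \to \Dflg$ the assignment $\phi \mapsto (\phi\, y) \circ \eta$ is a bijection from $\Nat\bigl(\Lanyy(\eFun{F}\eFun{G}\eFun{H}), K\bigr)$ to $\Nat\bigl(y\eFun{F}\eFun{G}\eFun{H}, K y\bigr)$, where $\eta$ denotes the unit. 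Since $y$ is full and faithful and the extension is pointwise, $\eta$ is invertible (this is the content of Proposition~\ref{prop:LanRestrict}), so two natural transformations out of $\Lanyy(\eFun{F}\eFun{G}\eFun{H})$ agree if and only if their whiskerings with $y$ agree. It therefore suffices to prove that the two composites coincide after whiskering with $y$, that is, on the representables $yV$.

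The key computation is that for any pair of endofunctors $\eFun{A}, \eFun{B}$ the whiskered transformation $\mu_{\eFun{A},\eFun{B}}\, y$ is the canonical comparison isomorphism. Indeed, the plot category $y \Comma yV$ has the terminal object $(\id_{yV})$, so the colimit appearing in~\eqref{eq:KanProdIso2} reduces to $y\eFun{B}V$ and the comparison morphism~\eqref{eq:KanProdMorph1} is an isomorphism over $yV$. Unwinding the definition~\eqref{eq:yKanMonoid} and applying Proposition~\ref{prop:LanRestrict}, the component $\mu_{\eFun{A},\eFun{B},yV}$ becomes the identity of $y\eFun{A}\eFun{B}V$ once the domain $\Lanyy(\eFun{A}\eFun{B})(yV)$ and the codomain $(\Lanyy\eFun{A})(\Lanyy\eFun{B})(yV)$ are identified with $y\eFun{A}\eFun{B}V$ through the canonical restriction isomorphisms of~\eqref{eq:LanCommute}.

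I would then feed this into each factor of the two composites. Using the interchange law for whiskering and the Godement product (Notation~\ref{notation:Functors}), both $\bigl((\Lanyy\eFun{F})\mu_{\eFun{G},\eFun{H}}\bigr) \circ \mu_{\eFun{F},\eFun{G}\eFun{H}}$ and $\bigl(\mu_{\eFun{F},\eFun{G}}(\Lanyy\eFun{H})\bigr) \circ \mu_{\eFun{F}\eFun{G},\eFun{H}}$, evaluated at $yV$ and read through the canonical identifications of Proposition~\ref{prop:LanRestrict} of both source and target with $y\eFun{F}\eFun{G}\eFun{H}V$, reduce to the identity of $y\eFun{F}\eFun{G}\eFun{H}V$. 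Hence the two whiskerings with $y$ coincide, and the universal property forces the composites themselves to coincide. The only genuinely delicate point is the bookkeeping in the middle step: one must verify that the comparison map~\eqref{eq:KanProdMorph1} is an isomorphism on representables and that the canonical identifications are compatible with the left and right whiskerings, after which associativity is automatic, since a composite of canonical identification isomorphisms built from~\eqref{eq:LanCommute} does not depend on the bracketing.
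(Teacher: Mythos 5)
The paper states Proposition~\ref{prop:KanAssoc} without proof (the proofs of Section~\ref{sec:LeftKanExtension} are deferred to \cite{Blohmann:Elastic}), so there is no in-paper argument to compare yours against; I can only judge it on its own merits. Your overall strategy is correct and is the natural one: since $\Lanyy(\eFun{F}\eFun{G}\eFun{H}) = \Lan_y y\eFun{F}\eFun{G}\eFun{H}$ is a pointwise left Kan extension along the full and faithful $y$, natural transformations out of it are determined by their whiskerings with $y$; and on a representable $yV$ the category of plots $y \Comma yV$ has terminal object $\id_{yV}$, so the comparison morphism~\eqref{eq:KanProdMorph1} is invertible there and each $\mu$ becomes a canonical isomorphism. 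Both of these reductions check out.

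Where the write-up is too thin is the final step, and you say so yourself. Two concrete points. First, in the lower composite the factor $\mu_{\eFun{F},\eFun{G}}(\Lanyy\eFun{H})$ evaluated at $yV$ is the component of $\mu_{\eFun{F},\eFun{G}}$ at the object $(\Lanyy\eFun{H})(yV)$, which is \emph{not} representable; your representable computation applies to it only after inserting a naturality square of $\mu_{\eFun{F},\eFun{G}}$ along the canonical isomorphism $(\Lanyy\eFun{H})(yV) \cong y\eFun{H}V$. Second, ``a composite of canonical identifications does not depend on the bracketing'' is not a principle one may quote here; it is essentially the assertion being proved. Both points are handled cleanly if you package your representable computation as the statement that $\mu_{\eFun{A},\eFun{B}}$ is the unique natural transformation satisfying
\begin{equation*}
  (\mu_{\eFun{A},\eFun{B}}\, y) \circ \eta_{\eFun{A}\eFun{B}}
  = \bigl((\Lanyy\eFun{A})\,\eta_{\eFun{B}}\bigr) \circ
    \bigl(\eta_{\eFun{A}}\,\eFun{B}\bigr)
  \,,
\end{equation*}
where $\eta_{\eFun{A}}: y\eFun{A} \to (\Lanyy\eFun{A})\,y$ denotes the (invertible) unit of the Kan extension; uniqueness is exactly the universal property you already invoked. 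Using this equation three times, together with the interchange law applied to $\eta_{\eFun{H}}: y\eFun{H} \to (\Lanyy\eFun{H})\,y$ to move $\mu_{\eFun{F},\eFun{G}}$ from the non-representable component back to $y\eFun{H}V$, a short calculation shows that both composites of~\eqref{eq:KanAssoc0}, whiskered with $y$ and precomposed with $\eta_{\eFun{F}\eFun{G}\eFun{H}}$, equal $\bigl((\Lanyy\eFun{F})(\Lanyy\eFun{G})\,\eta_{\eFun{H}}\bigr) \circ \bigl((\Lanyy\eFun{F})\,\eta_{\eFun{G}}\eFun{H}\bigr) \circ \bigl(\eta_{\eFun{F}}\,\eFun{G}\eFun{H}\bigr)$; the universal property then forces them to coincide. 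With this calculation written out, your proof is complete; as it stands, the crucial verification is asserted rather than performed.
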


\begin{Proposition}
\label{prop:KanHorComp}
Let $\eFun{\alpha}: \eFun{F} \to \eFun{F}'$ and $\eFun{\beta}: \eFun{G} \to \eFun{G}'$ be natural transformations of endofunctors of $\Eucl$. Then the diagram
\begin{equation*}
\begin{tikzcd}[column sep=large]
\Lanyy(\eFun{F} \eFun{G})
\ar[r, "\Lanyy(\eFun{\alpha}\eFun{\beta})"] 
\ar[d, "\mu_{\eFun{F},\eFun{G}}"'] 
&
\Lanyy(\eFun{F}'\eFun{G}') 
\ar[d, "\mu_{\eFun{F}',\eFun{G}'}"]
\\
(\Lanyy\eFun{F})(\Lanyy\eFun{G}) 
\ar[r, "(\Lanyy\eFun{\alpha})(\Lanyy\eFun{\beta})"']
&
(\Lanyy\eFun{F}') (\Lanyy\eFun{G}') 
\end{tikzcd}    
\end{equation*}
is commutative.    
\end{Proposition}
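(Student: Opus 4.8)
The plan is to show that the two composites $\mu_{\eFun{F}',\eFun{G}'}\circ\Lanyy(\eFun{\alpha}\eFun{\beta})$ and $\bigl((\Lanyy\eFun{\alpha})(\Lanyy\eFun{\beta})\bigr)\circ\mu_{\eFun{F},\eFun{G}}$ agree as natural transformations $\Lanyy(\eFun{F}\eFun{G})\to(\Lanyy\eFun{F}')(\Lanyy\eFun{G}')$. Evaluated at $X$, both are morphisms out of the colimit $\Lanyy(\eFun{F}\eFun{G})X\cong\Colim_{yU\to X}y\eFun{F}\eFun{G}U$, so by the universal property of the colimit it suffices to check equality after precomposing with each coprojection $y\eFun{F}\eFun{G}U\to\Lanyy(\eFun{F}\eFun{G})X$ indexed by a plot $p:yU\to X$.

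First I would reduce to two ``whiskered'' special cases. Writing the Godement product as the vertical composite $\eFun{\alpha}\eFun{\beta}=(\eFun{\alpha}\eFun{G}')\circ(\eFun{F}\eFun{\beta})$ and using that $\Lanyy$ preserves vertical composition~\eqref{eq:KanVertComp}, together with the interchange decomposition $(\Lanyy\eFun{\alpha})(\Lanyy\eFun{\beta})=\bigl((\Lanyy\eFun{\alpha})(\Lanyy\eFun{G}')\bigr)\circ\bigl((\Lanyy\eFun{F})(\Lanyy\eFun{\beta})\bigr)$, the full square factors as a pasting of two squares, one in which $\eFun{\beta}$ is an identity and one in which $\eFun{\alpha}$ is an identity. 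Thus it is enough to prove that $\mu$ is natural in each argument separately: that $\mu_{\eFun{F}',\eFun{G}}\circ\Lanyy(\eFun{\alpha}\eFun{G})=\bigl((\Lanyy\eFun{\alpha})(\Lanyy\eFun{G})\bigr)\circ\mu_{\eFun{F},\eFun{G}}$ for every $\eFun{\alpha}:\eFun{F}\to\eFun{F}'$ and every $\eFun{G}$, and dually $\mu_{\eFun{F},\eFun{G}'}\circ\Lanyy(\eFun{F}\eFun{\beta})=\bigl((\Lanyy\eFun{F})(\Lanyy\eFun{\beta})\bigr)\circ\mu_{\eFun{F},\eFun{G}}$ for every $\eFun{\beta}:\eFun{G}\to\eFun{G}'$ and every $\eFun{F}$; the general square is recovered by pasting these, with $\eFun{G}$ replaced by $\eFun{G}'$ in the first.

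For each reduced square I would decompose $\mu$ into its three constituent morphisms~\eqref{eq:KanProdIso1}, \eqref{eq:KanProdMorph1}, \eqref{eq:KanProdIso2}, turning the rectangle into a stack of three squares. The outer two record the naturality of the canonical isomorphisms: \eqref{eq:KanProdIso1} is built componentwise from the isomorphism $(\Lanyy\eFun{F})yU\cong y\eFun{F}U$ of Proposition~\ref{prop:LanRestrict}, whose naturality in $U\in\Eucl$ (applied to the maps $\eFun{\beta}_U$, resp.\ along $\eFun{\alpha}$) gives the required commutativity, while \eqref{eq:KanProdIso2} is the pointwise colimit formula~\eqref{eq:LanColim}, again manifestly natural. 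The only substantive square is the middle one, which asserts that the comparison morphism~\eqref{eq:KanProdMorph1}, $\Colim_{yU\to X}(\Lanyy\eFun{F})y\eFun{G}U\to(\Lanyy\eFun{F})\Colim_{yU\to X}y\eFun{G}U$, is natural in the diagram $U\mapsto y\eFun{G}U$ (for the $\eFun{\beta}$-case) and in the endofunctor $\Lanyy\eFun{F}$ (for the $\eFun{\alpha}$-case). Both are standard consequences of the universal property defining~\eqref{eq:KanProdMorph1}: precomposing either route with the colimit leg attached to a plot $p:yU\to X$ reduces the identity to the evident compatibility of $\Lanyy\eFun{F}$, applied to the structure maps of the colimit, with $y\eFun{\beta}_U$, resp.\ to the naturality of $\Lanyy\eFun{\alpha}$.

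I expect the main difficulty to be organizational rather than conceptual: because $\Lanyy\eFun{F}$ does not preserve colimits, \eqref{eq:KanProdMorph1} is a genuine non-invertible morphism whose naturality cannot be inferred from an isomorphism and must instead be extracted directly from the universal property, checking agreement on every coprojection leg. Keeping the bookkeeping correct---in particular distinguishing $U$ from $yU$ as warned in Remark~\ref{not:SubcatIdentify}, and tracking which whiskered components ($\eFun{\alpha}_{\eFun{G}U}$ versus $\eFun{F}(\eFun{\beta}_U)$, and so on) survive passage through the two isomorphisms---is where errors are most likely to creep in.
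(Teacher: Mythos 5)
The paper itself offers no proof of Proposition~\ref{prop:KanHorComp}: it is one of the results stated here without proof and deferred to \cite{Blohmann:Elastic}, the surrounding text merely remarking that the naturality of \eqref{eq:ProdMor1} in $\eFun{F}$ and $\eFun{G}$ is ``straightforward to see.'' Your argument is correct and is precisely the argument that remark presupposes: the interchange reduction to the two whiskered squares is legitimate because $\Lanyy$ preserves vertical composition \eqref{eq:KanVertComp} and the Godement product decomposes as $\eFun{\alpha}\eFun{\beta}=(\eFun{\alpha}\eFun{G}')\circ(\eFun{F}\eFun{\beta})$, and checking each whiskered square on the coprojection legs of $\Colim_{yU\to X}y\eFun{F}\eFun{G}U$ uses exactly the right three ingredients --- that \eqref{eq:KanProdMorph1} restricted to the leg of a plot $p$ is $\Lanyy\eFun{F}$ applied to the corresponding coprojection, the naturality of $\Lanyy\eFun{\alpha}$ applied to those coprojections, and the compatibility of $(\Lanyy\eFun{\alpha})_{yU}$ with $y\eFun{\alpha}_U$ under the isomorphism of Proposition~\ref{prop:LanRestrict}. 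The only improvement I would suggest is to isolate that last compatibility as an explicit lemma (it follows from $y$ being full and faithful, so that restriction along $y$ inverts $\Lan_y$ naturally in the functor), since it is the one fact your sketch invokes only implicitly via ``naturality \ldots resp.\ along $\eFun{\alpha}$.''
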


\subsection{Compatibility with the \textit{D}-topology}

\begin{Definition}[Sec.~2.8 in \cite{Iglesias-Zemmour:Diffeology}]
The \textbf{$D$-topology} on a diffeological space is the finest topology (on the underlying set) such that every plot is continuous.
\end{Definition}

Explicitly, a subset $Y \subset X$ is open in the $D$-topolgy if and only if for every plot $p: yU \to X$, the preimage $p^{-1}(Y) \subset U$ is open. Every morphism of diffeological spaces is continuous with respect to the $D$-topologies. In the following ``open'' and ''continuous'' are always meant with respect to the $D$-topology. An open subset $S \subset X$ of a diffeological space is naturally equipped with the subspace diffeology, so that the inclusion $i:S \to X$ is an open induction.

Let $\{S_i \subset X\}_{i \in I}$ be an open cover of a diffeological space $X$. Then the diagram
\begin{equation}
\label{eq:LocFun1a}
\begin{tikzcd}
\coprod_{i,j} S_{ij}
\ar[r, shift left] \ar[r, shift right] &
\coprod_i S_i \ar[r] & 
X
\end{tikzcd}
\,,
\end{equation}
where $S_{ij} = S_i \times_X S_j$, is a coequalizer. A functor $F: \Dflg \to \Dflg$ is a \textbf{cosheaf} if $F$ preserves the coequalizer~\eqref{eq:LocFun1a}, that is, if
\begin{equation}
\label{eq:LocFun1b}
\begin{tikzcd}
\coprod_{i,j} FS_{ij}
\ar[r, shift left] \ar[r, shift right] &
\coprod_i FS_i \ar[r] & 
FX
\end{tikzcd}
\end{equation}
is a coequalizer for every open cover. A special kind of cosheaf is given by a natural bundle that satisfies the following locality condition.

\begin{Definition}
\label{def:LocalBundle}
A bundle $\pi: F \to \Id$ of endofunctors of diffeological spaces will be called \textbf{local} if for every open induction $i: S \to X$ the commutative diagram
\begin{equation}
\label{eq:Dtop2}
\begin{tikzcd}
FS 
\ar[d, "\pi_S"']
\ar[r, "Fi"]
&
FX \ar[d, "\pi_X"]
\\
S \ar[r, "i"'] &
X
\end{tikzcd}
\end{equation}
is a pullback.
\end{Definition}

\begin{Proposition}
\label{prop:LocBundCosheaf}
If a bundle $\pi: F \to \Id$ of endofunctors of diffeological spaces is local, then $F$ is a cosheaf.
\end{Proposition}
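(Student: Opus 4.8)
The plan is to realize the cosheaf diagram~\eqref{eq:LocFun1b} as the image of the defining descent coequalizer~\eqref{eq:LocFun1a} under a single colimit-preserving pullback functor, and then to identify the resulting terms by means of the locality hypothesis. First I would lift~\eqref{eq:LocFun1a} to the slice category $\Dflg \Comma X$. Each of $\coprod_{i,j} S_{ij}$, $\coprod_i S_i$, and $X$ carries its canonical structure map to $X$, and the two parallel arrows together with the coequalizing arrow are morphisms over $X$ (for instance, the two inclusions $S_{ij} \hookrightarrow S_i$ and $S_{ij} \hookrightarrow S_j$ both compose with the structure maps to give $S_{ij} \hookrightarrow X$). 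Since the forgetful functor $\Dflg \Comma X \to \Dflg$ creates colimits and the coequalizing arrow $\coprod_i S_i \to X$ is itself the structure map, diagram~\eqref{eq:LocFun1a} is a coequalizer in $\Dflg \Comma X$ with colimit $(X, \id_X)$.

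Next I would apply the pullback functor $\pi_X^\ast : \Dflg \Comma X \to \Dflg \Comma FX$ along $\pi_X : FX \to X$. Because $\Dflg$ is locally cartesian closed (Proposition~\ref{prop:ConcSheafProperties}), $\pi_X^\ast$ admits a right adjoint and hence preserves all colimits, sending the coequalizer of the previous step to a coequalizer in $\Dflg \Comma FX$. Computing the terms, $\pi_X^\ast(X,\id_X) \cong (FX, \id_{FX})$ since $X \times_X FX \cong FX$; and as $\pi_X^\ast$ preserves coproducts one gets $\pi_X^\ast(\coprod_i S_i \to X) \cong \coprod_i (S_i \times_X FX \to FX)$, and similarly for the $S_{ij}$. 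Each inclusion $S_i \hookrightarrow X$ and $S_{ij} \hookrightarrow X$ is an open induction, so the locality condition~\eqref{eq:Dtop2} supplies isomorphisms $S_i \times_X FX \cong FS_i$ and $S_{ij} \times_X FX \cong FS_{ij}$ over $FX$.

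The one point that genuinely requires care is that these isomorphisms be natural for the inclusions occurring in the cover, so that the two parallel arrows and the coequalizing arrow become $F$ applied to the corresponding maps, turning the diagram into exactly~\eqref{eq:LocFun1b}. I would verify this directly: the locality isomorphism $FS \xrightarrow{\cong} S \times_X FX$ is the comparison map of the pullback cone with legs $\pi_S : FS \to S$ and $Fi_S : FS \to FX$, where $i_S : S \to X$ is the inclusion. For an inclusion $\iota : S' \hookrightarrow S$ of open subsets (so $i_S \circ \iota = i_{S'}$), the comparison square commutes because $\pi_S \circ F\iota = \iota \circ \pi_{S'}$ by naturality of $\pi : F \to \Id$, and $Fi_S \circ F\iota = F(i_S \circ \iota) = Fi_{S'}$ by functoriality of $F$. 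This shows that both the parallel and the coequalizing morphisms are identified with $F$ of the inclusions.

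Finally, applying the colimit-preserving forgetful functor $\Dflg \Comma FX \to \Dflg$ yields~\eqref{eq:LocFun1b} as a coequalizer in $\Dflg$, which is precisely the statement that $F$ is a cosheaf. I expect no deep difficulty in any single step; the main obstacle is the bookkeeping of assembling the slice categories so that the one colimit-preserving functor $\pi_X^\ast$ carries the descent coequalizer for $X$ to the one for $FX$, together with confirming that the locality identifications are natural in the cover, so that the transported diagram really is~\eqref{eq:LocFun1b} on the nose rather than merely isomorphic term by term.
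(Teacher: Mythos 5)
Your proof is correct. There is nothing in the paper itself to compare it against: Proposition~\ref{prop:LocBundCosheaf} is one of the results the paper states without proof, deferring to the companion paper \cite{Blohmann:Elastic}, so your argument can only be judged on its own merits — and it holds up.

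The structure is sound at every step. Lifting the descent coequalizer~\eqref{eq:LocFun1a} to $\Dflg \Comma X$ is legitimate because the forgetful functor from a slice creates all colimits, so the vertex lifts to $(X,\id_X)$. The appeal to local cartesian closedness (Proposition~\ref{prop:ConcSheafProperties}) to get a right adjoint for $\pi_X^*$, hence preservation of colimits, is the standard equivalence between ``all slices are cartesian closed'' and ``all base-change functors have right adjoints''; equivalently, you are using that colimits in a quasi-topos are universal (stable under pullback). The term-by-term identifications $S_i \times_X FX \cong FS_i$ and $S_{ij} \times_X FX \cong FS_{ij}$ are exactly what Definition~\ref{def:LocalBundle} provides, noting (as you implicitly do) that the intersections $S_{ij}$ are again open, so locality applies to them. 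Most importantly, you correctly isolated and discharged the one point where such an argument can silently go wrong: the comparison isomorphisms must be natural with respect to the inclusions $S_{ij} \hookrightarrow S_i$ and $S_i \hookrightarrow X$, so that the transported parallel pair and coequalizing arrow are literally $F$ of the inclusions rather than some other maps between isomorphic objects. Your verification — both components into the pullback $S \times_X FX$ agree, one by naturality of $\pi$, the other by functoriality of $F$ — is exactly the needed computation, and with it the transported diagram is isomorphic as a diagram to~\eqref{eq:LocFun1b}, which is therefore a coequalizer.
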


\begin{Proposition}
\label{prop:LocBudInduction}
If a bundle $\pi: F \to \Id$ of endofunctors of diffeological spaces is local, then $F$ preserves open inductions.
\end{Proposition}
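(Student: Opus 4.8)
The plan is to read $Fi$ off the locality square as a pullback of $i$, deduce that it is an induction from the stability of strong monomorphisms under pullback, and then obtain openness from the continuity of the projection $\pi_X$ in the $D$-topology.

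First I would note that an open induction $i\colon S \to X$ is in particular a strong monomorphism. By Definition~\ref{def:LocalBundle}, locality of $\pi\colon F \to \Id$ says that the square with edges $Fi$, $\pi_S$, $i$, and $\pi_X$ is a pullback; read with $\pi_X\colon FX \to X$ as the map being pulled back, it exhibits $Fi\colon FS \to FX$ as the pullback of $i$ along $\pi_X$, so that $FS \cong S \times_X FX$ over $FX$ with $Fi$ the second projection. Since strong monomorphisms in $\Dflg$ are stable under pullback (Proposition~\ref{prop:ConcSheafProperties}) and $i$ is one, the pullback $Fi$ is again a strong monomorphism, i.e.\ an induction.

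It then remains to check that $Fi$ is \emph{open}. Identifying $S$ with the open subset $i(S) \subset X$, the description $FS \cong S \times_X FX$ shows that the image of $Fi$ is precisely $\pi_X^{-1}\bigl(i(S)\bigr)$: a point $\xi \in FX$ lies in the image exactly when $\pi_X(\xi) \in i(S)$, and the matching $S$-coordinate is then unique because $i$ is monic. As $\pi_X$ is a morphism of diffeological spaces it is continuous for the $D$-topologies, so $\pi_X^{-1}\bigl(i(S)\bigr)$ is open in $FX$; since $Fi$ is an induction, $FS$ is then isomorphic to this open subset with its subspace diffeology, and $Fi$ is the inclusion of an open subspace. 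The argument is short once the locality square is interpreted as a pullback, so the only step requiring genuine care is this openness claim: one must confirm that the image is exactly $\pi_X^{-1}\bigl(i(S)\bigr)$ and that, for an induction onto an open subset, the subspace diffeology recovers the open-subspace structure — which is precisely where the hypothesis that $i$ is open, and not merely an induction, enters.
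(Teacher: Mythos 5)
Your proof is correct. The paper itself states Proposition~\ref{prop:LocBudInduction} without proof (it is deferred to the companion paper \cite{Blohmann:Elastic}), but your argument is the natural one and every step is covered by facts available in the paper: the locality square of Definition~\ref{def:LocalBundle} exhibits $Fi$ as the pullback of the strong monomorphism $i$ along $\pi_X$, strong monomorphisms are pullback-stable by Proposition~\ref{prop:ConcSheafProperties}, the underlying set of the pullback is the set-theoretic fiber product because the functor of points preserves limits, so the image of $Fi$ is exactly $\pi_X^{-1}\bigl(i(S)\bigr)$, which is $D$-open since every morphism of diffeological spaces is $D$-continuous.
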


\begin{Proposition}
\label{prop:LocBundProd}
Let $\pi: F \to \Id$ and $\rho: G \to \Id$ be bundles of endofunctors of diffeological spaces. If both bundles are local, then the fiber product $F \times_\Id G \to \Id$ and the composition $FG \to \Id$ is local. 
\end{Proposition}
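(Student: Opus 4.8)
The plan is to handle the two claims separately, reducing locality (Definition~\ref{def:LocalBundle}) in each case to a manipulation of pullback squares in $\Dflg$. Throughout I will use that limits in $\Dflg$ are computed pointwise on $\Eucl$ (Proposition~\ref{prop:ConcSheafProperties}), so that whether a commutative square is a pullback may be checked plot by plot, where the question reduces to the corresponding elementary statement in $\Set$.

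For the fiber product $F \times_\Id G \to \Id$, whose value at an object $X$ is $FX \times_X GX$ with projection $\pi_X \circ \pr_1 = \rho_X \circ \pr_2$, I would fix an open induction $i \colon S \to X$ and apply locality of $F$ and of $G$ to $i$. This yields two pullback squares, exhibiting $FS \cong S \times_X FX$ and $GS \cong S \times_X GX$ as bundles over $S$. Forming the fiber product over $S$, I would identify $(F \times_\Id G)(S) = FS \times_S GS \cong (S \times_X FX) \times_S (S \times_X GX) \cong S \times_X (FX \times_X GX) = S \times_X (F \times_\Id G)(X)$, where the middle isomorphism is the elementary distributivity identity $(S \times_X A) \times_S (S \times_X B) \cong S \times_X (A \times_X B)$. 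Since both sides are finite limits and finite limits commute, this identity holds in $\Dflg$; concretely it is verified on plots in $\Set$. The resulting isomorphism is exactly the assertion that the locality square of $F \times_\Id G$ over $i$ is a pullback.

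For the composition $FG \to \Id$ I would take the bundle projection to be $\rho \circ \pi G$, whose component at $X$ is the composite $FGX \xrightarrow{\pi_{GX}} GX \xrightarrow{\rho_X} X$, and verify locality by a pasting argument. Given an open induction $i \colon S \to X$, the first and essential step is to observe that $Gi \colon GS \to GX$ is again an open induction: this is precisely Proposition~\ref{prop:LocBudInduction} applied to the local bundle $G$. Only with this in hand may I apply locality of $F$ to the open induction $Gi$, which produces a pullback square with top map $F(Gi) = (FG)(i)$ and vertical maps $\pi_{GS}$ and $\pi_{GX}$ sitting over $Gi$. Stacking this square on top of the pullback square furnished by locality of $G$ applied to $i$ --- whose vertical maps are $\rho_S$ and $\rho_X$ over $i$ --- and invoking the pasting lemma for pullbacks, I conclude that the outer rectangle is a pullback. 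Its vertical maps are $\rho_S \circ \pi_{GS} = (\rho \circ \pi G)_S$ and $\rho_X \circ \pi_{GX} = (\rho \circ \pi G)_X$ and its top map is $(FG)(i)$, so this outer rectangle is exactly the locality square of $FG \to \Id$ over $i$.

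The main obstacle is localized entirely in the composition case and explains why the hypotheses are stated as they are: locality of $F$ only delivers a pullback square over an \emph{open induction}, so the pasting argument would stall unless $Gi$ is known to be one. Securing that $Gi$ is an open induction --- which relies on Proposition~\ref{prop:LocBudInduction}, and hence on the locality of $G$ --- is therefore the crux; once both constituent squares are pullbacks over open inductions, the pasting lemma completes the argument formally. A routine but necessary bookkeeping point is to confirm that $\rho \circ \pi G$ is indeed the intended bundle projection on $FG$, so that the pasted rectangle coincides verbatim with the square of Definition~\ref{def:LocalBundle}; the fiber-product case requires no such analytic input and rests only on the commutation of finite limits.
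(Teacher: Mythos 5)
Your proof is correct; the paper itself states this proposition without proof (deferring it to the companion paper), but your argument uses exactly the ingredients the paper lines up for it --- the plot-wise reduction of pullbacks and the distributivity of fiber products for the first claim, and, for the composition, Proposition~\ref{prop:LocBudInduction} to ensure $Gi$ is an open induction before applying locality of $F$, followed by the pasting lemma. You have also correctly identified the crux: without knowing $Gi$ is an open induction, locality of $F$ could not be invoked, which is precisely why that proposition precedes this one.
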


\begin{Proposition}
\label{prop:LocFuncCover}
Let $\pi: F \to \Id$ and $\rho: G \to \Id$ be bundles of endofunctors of diffeological spaces and $\alpha: F \to G$ a morphism of bundles, $\rho \circ \alpha = \pi$. If both bundles are local, then the following are equivalent:
\begin{itemize}

\item[(i)] $\alpha_X: FX \to GX$ is an induction (subduction, epimorphism, isomorphism).

\item[(ii)] There is an open cover $\{S_i \to X \}$, such that $\alpha_{S_i}: FS_i \to GS_i$ is an induction (subduction, epimorphism, isomorphism) for all $i$.

\end{itemize}
\end{Proposition}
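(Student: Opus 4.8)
The plan is to dispatch (i)$\Rightarrow$(ii) at once and to concentrate the work on (ii)$\Rightarrow$(i). For (i)$\Rightarrow$(ii) it suffices to take the one-element open cover $\{X \to X\}$: since $X$ is open in its own $D$-topology, $\alpha_X$ itself witnesses (ii). For the converse I would first set up the geometry of the cover on the total spaces. Since $\pi\colon F \to \Id$ is local, the defining pullback square of Definition~\ref{def:LocalBundle} identifies $FS_i$ with $\pi_X^{-1}(S_i) \subset FX$ carrying the subspace diffeology, and likewise $GS_i \cong \rho_X^{-1}(S_i)$. As $\pi_X$ and $\rho_X$ are morphisms, hence continuous for the $D$-topology, and $\bigcup_i S_i = X$, the families $\{FS_i \to FX\}$ and $\{GS_i \to GX\}$ are open covers. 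From $\rho \circ \alpha = \pi$ one gets $\alpha_X^{-1}(GS_i) = (\rho_X\alpha_X)^{-1}(S_i) = \pi_X^{-1}(S_i) = FS_i$, so $\alpha_X$ restricts over each cover element to $\alpha_{S_i}\colon FS_i \to GS_i$. Applying locality to the open inductions $S_{ij} := S_i \cap S_j \hookrightarrow S_i$ further shows that the naturality square relating $\alpha_{S_{ij}}$ to $\alpha_{S_i}$ is a pullback, so $\alpha_{S_{ij}}$ is the base change of $\alpha_{S_i}$; by the pullback-stability of (strong) monomorphisms and epimorphisms in $\Dflg$ (Proposition~\ref{prop:ConcSheafProperties}), $\alpha_{S_{ij}}$ inherits whichever of the four properties $\alpha_{S_i}$ enjoys.

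With this in place the ``surjective'' cases are quick. For subductions I would use the plot-lifting characterization: given a plot $q\colon U \to GX$ and a point $u_0 \in U$, continuity together with the open cover yield a neighbourhood $U_0$ with $q(U_0) \subset GS_i$ for some $i$; lifting the resulting plot $U_0 \to GS_i$ through the subduction $\alpha_{S_i}$ produces a local lift into $FS_i \subset FX$, so $\alpha_X$ lifts plots locally and is a subduction. For epimorphisms I would argue by cancellation: the cover gives subductions $q_F\colon \coprod_i FS_i \to FX$ and $q_G\colon \coprod_i GS_i \to GX$, the coproduct $\coprod_i \alpha_{S_i}$ is an epimorphism (the coproduct is a left adjoint), and from the commuting relation $\alpha_X \circ q_F = q_G \circ \coprod_i \alpha_{S_i}$ the right-hand side is a composite of epimorphisms, whence $\alpha_X \circ q_F$ is an epimorphism; since $\alpha_X \circ q_F$ epi forces $\alpha_X$ epi, we are done.

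The induction case is where I expect the real work, and I would check injectivity and the initiality of the diffeology separately. For injectivity, if $\alpha_X(a) = \alpha_X(a') =: b$ with $a \in FS_i$ and $a' \in FS_j$, then $b \in GS_i \cap GS_j = GS_{ij}$, so $\rho_X(b) \in S_{ij}$ and both $a, a'$ lie in $\pi_X^{-1}(S_{ij}) = FS_{ij}$; as the inherited map $\alpha_{S_{ij}}$ is injective, $a = a'$. For initiality I would take $p\colon U \to FX$ with $\alpha_X \circ p$ a plot and verify that $p$ is a plot locally: around each $u_0 \in U$ choose $U_0$ with $(\alpha_X \circ p)(U_0) \subset GS_i$; then $p(U_0) \subset \pi_X^{-1}(S_i) = FS_i$, and since $\alpha_{S_i}$ is an induction the restriction $p|_{U_0}\colon U_0 \to FS_i$ is a plot, so $p$ is a plot by the sheaf property of $FX$. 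Finally the isomorphism case follows formally: a morphism that is both an induction and a subduction is an isomorphism, and when each $\alpha_{S_i}$ is an isomorphism the two previous cases supply both.

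The main obstacle, and the step demanding the most care, is this induction argument. It is the only place where one must descend the \emph{initial} (pullback) diffeology from the cover rather than a mere surjectivity- or lifting-type property, and it relies essentially on the locality pullbacks to match $p(U_0)$ with $FS_i$ and on the pullback-stability of strong monomorphisms to control the overlaps $S_{ij}$. The remaining bookkeeping -- verifying that $\{FS_i\}$ and $\{GS_i\}$ are genuinely open covers with $\alpha_X^{-1}(GS_i) = FS_i$ -- is where locality of \emph{both} bundles is indispensable, and I would isolate it at the outset so that all four cases can be run uniformly on the same cover.
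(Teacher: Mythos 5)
The paper states Proposition~\ref{prop:LocFuncCover} without proof (it is among the results whose proofs are deferred to \cite{Blohmann:Elastic}), so there is no in-paper argument to compare against; judged on its own merits, your proof is correct and complete. The reduction (i)$\Rightarrow$(ii) via the trivial cover is fine, and your setup does the real work: the locality pullbacks identify $FS_i \cong \pi_X^{-1}(S_i)$ and $GS_i \cong \rho_X^{-1}(S_i)$ as $D$-open subspaces of $FX$ and $GX$, the relation $\rho \circ \alpha = \pi$ gives $\alpha_X^{-1}(GS_i) = FS_i$, and naturality matches $\alpha_X|_{FS_i}$ with $\alpha_{S_i}$; with that in place your four case analyses (local plot-lifting for subductions, cancellation of epimorphisms against the coproduct subductions, locality-plus-sheaf-property for the initial diffeology, and induction~$+$~subduction~$\Rightarrow$~isomorphism) all go through.

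Two small remarks. First, the injectivity step in the induction case can be shortened so that the overlap machinery for $S_{ij}$ becomes unnecessary: if $\alpha_X(a) = \alpha_X(a')$, then $\pi_X(a) = \rho_X(\alpha_X(a)) = \rho_X(\alpha_X(a')) = \pi_X(a')$, so $a$ and $a'$ lie over a common point of $X$, hence in a common $FS_i$, and injectivity of the induction $\alpha_{S_i}$ finishes the argument; your route through $FS_{ij}$ and pullback stability is correct but roundabout. Second, you invoke the local-lifting characterization of subductions and the ``$p$ is a plot iff $\alpha_X \circ p$ is a plot'' characterization of inductions; these are standard in diffeology and equivalent to the paper's definitions as strong epimorphisms and strong monomorphisms, but since the paper never proves this equivalence, a self-contained write-up should cite it (e.g.\ \cite{Iglesias-Zemmour:Diffeology}) or derive it from the quasi-topos properties listed in Proposition~\ref{prop:ConcSheafProperties}.
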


Definition~\ref{def:LocalBundle} applies also to bundles of endomorphisms of euclidean spaces. Explicitly, a bundle $\eFun{\pi}: \eFun{F} \to \Id$ is local if $\eFun{\pi}_U^{-1}(V) \cong \eFun{F}V$ for every open subset $V \subset U$. The next proposition shows that this notion locality is preserved by the left Kan extension to diffeological spaces.

\begin{Proposition}
\label{prop:KanLocal}
If a bundle $\eFun{\pi}: \eFun{F} \to 1$ of endofunctors of euclidean spaces is local, then so is its left Kan extension $\Lanyy\eFun{\pi}: \Lanyy\eFun{F} \to \Id$.
\end{Proposition}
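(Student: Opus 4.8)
The plan is to show directly that the canonical comparison morphism $(\Lanyy\eFun{F})S \to S \times_X (\Lanyy\eFun{F})X$ is an isomorphism. Write $G := \Lanyy\eFun{F}$ and $\pi := \Lanyy\eFun{\pi}: G \to \Lanyy\Id \cong \Id$ (here $\Lanyy\Id \cong \Id$ because $y$ is dense, Proposition~\ref{prop:ColimRep}); for an open induction $i: S \to X$ the square with edges $Gi$, $\pi_S$, $\pi_X$, $i$ commutes by naturality of $\pi$, so the comparison map exists. Because $\Dflg$ is locally cartesian closed (Proposition~\ref{prop:ConcSheafProperties}), the base-change functor $i^* = S \times_X (\Empty): \Dflg \Comma X \to \Dflg \Comma S$ has a right adjoint and hence preserves colimits. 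Viewing $GX \to X$ as the colimit in $\Dflg \Comma X$ of the plot diagram $\bigl( y\eFun{F}U \xrightarrow{\,p\,\circ\, y\eFun{\pi}_U\,} X \bigr)_{p: yU \to X}$ and applying $i^*$, I obtain over $S$ the identification
\[
  S \times_X GX \;\cong\; \Colim_{yU \to X} \bigl( S \times_X y\eFun{F}U \bigr)\,.
\]

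Next I would evaluate each term. Since $y\eFun{F}U \to X$ factors as $y\eFun{F}U \xrightarrow{y\eFun{\pi}_U} yU \xrightarrow{p} X$, pasting of pullbacks gives $S \times_X y\eFun{F}U \cong (S \times_X yU) \times_{yU} y\eFun{F}U$. The pullback of the open induction $i$ along the plot $p$ is $S \times_X yU \cong yV_p$, where $V_p := p^{-1}(S) \subseteq U$ is open (as $S$ is open in the $D$-topology and $p$ is continuous): this pullback is an induction into $yU$ with open image $V_p$, so it carries the euclidean subspace diffeology of $V_p$. Because $y$ preserves pullbacks and $\eFun{\pi}$ is local on $\Eucl$, one has $yV_p \times_{yU} y\eFun{F}U \cong y\bigl(\eFun{\pi}_U^{-1}(V_p)\bigr) \cong y\eFun{F}V_p$. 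Tracking structure maps, $S \times_X y\eFun{F}U \cong y\eFun{F}V_p$ as objects over $S$, the map to $S$ being $p|_{V_p} \circ y\eFun{\pi}_{V_p}$.

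It then remains to identify $\Colim_{yU \to X} y\eFun{F}V_p$ with $GS = \Colim_{yV \to S} y\eFun{F}V$. The assignment $p \mapsto V_p = p^{-1}(S)$ underlies a functor $\Phi: (y \Comma X) \to (y \Comma S)$ sending $(p: yU \to X)$ to $(p|_{V_p}: yV_p \to S)$ and each morphism to its restriction; setting $D(q: yV \to S) := y\eFun{F}V$, the diagram above is exactly $D \circ \Phi$. The crux is to prove that $\Phi$ is final, for then $\Colim(D \circ \Phi) \cong \Colim D = GS$ over $S$. I would establish finality by fixing $q: yV \to S$ and showing the comma category $q \Comma \Phi$ is connected: the plot $i \circ q: yV \to X$ satisfies $V_{iq} = V$ and $\Phi(iq) = q$, providing an object $(iq, \id_q)$; and for any object $(p, g: V \to V_p)$ the smooth map $h := \iota_{V_p} \circ g: V \to U$ is a morphism $iq \to p$ in $y \Comma X$ whose restriction is $g$, hence a morphism $(iq, \id_q) \to (p, g)$ in $q \Comma \Phi$. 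Thus every object is connected to $(iq, \id_q)$.

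The main obstacle is precisely this finality statement; the remaining ingredients are assembled from facts already available (locality of $\eFun{\pi}$ on $\Eucl$, stability of inductions under pullback, and colimit-preservation by $i^*$). I would finally check that the chain of isomorphisms is compatible with the projections to $S$ and with $Gi$, so that the isomorphism $GS \cong S \times_X GX$ produced above is genuinely the canonical comparison morphism; this is a routine chase through the colimit cocones, after which the square in Definition~\ref{def:LocalBundle} is a pullback and $\Lanyy\eFun{\pi}$ is local.
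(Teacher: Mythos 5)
Your proposed proof is sound, but a direct comparison with the paper is not possible here: the paper states Proposition~\ref{prop:KanLocal} without proof, deferring it (like the other results of Section~\ref{sec:LeftKanExtension}) to the companion paper \cite{Blohmann:Elastic}. Judged on its own merits, your argument is correct and uses only ingredients available in the paper: the identification $\Lanyy\Id \cong \Id$ via density (Proposition~\ref{prop:ColimRep}), colimit-preservation of the base-change functor $i^*$ from local cartesian closedness (Proposition~\ref{prop:ConcSheafProperties}), stability of inductions under pullback together with continuity of plots for the $D$-topology to get $S \times_X yU \cong yV_p$ with $V_p = p^{-1}(S)$ open, preservation by $y$ of pullbacks along open embeddings (these exist in $\Eucl$ as preimages), and the euclidean locality hypothesis to identify $yV_p \times_{yU} y\eFun{F}U \cong y\eFun{F}V_p$. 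The genuinely nontrivial step, which you correctly isolate, is the finality of the restriction functor $\Phi: y \Comma X \to y \Comma S$, $p \mapsto p|_{V_p}$, and your verification is complete: $(i \circ q, \id_q)$ lies in $q \Comma \Phi$, and for any $(p,g)$ the map $h = \iota_{V_p} \circ g$ gives a morphism from $(i\circ q, \id_q)$ to $(p,g)$, so the comma category is nonempty and connected. Two points you flag as routine do deserve the explicit check: first, the term-wise isomorphisms $S \times_X y\eFun{F}U \cong y\eFun{F}V_p$ must be natural in $p$, i.e.\ assemble into an isomorphism of diagrams $i^* \circ D_X \cong D_S \circ \Phi$ over $y \Comma X$ (this follows from the universal property of pullbacks, since all the isomorphisms involved are canonical comparison maps); second, the composite isomorphism $S \times_X GX \cong GS$ must be identified with the inverse of the canonical comparison $(\pi_S, Gi): GS \to S \times_X GX$, which one checks leg-by-leg against the colimit cocone using that $Gi$ sends the leg at a plot $q$ of $S$ to the leg at $i \circ q$, and that the cocone leg at $p \circ \iota_{V_p}$ factors as the leg at $p$ precomposed with $y\eFun{F}\iota_{V_p}$. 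With those verifications written out, the square of Definition~\ref{def:LocalBundle} is a pullback and the proof is complete.
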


\section{Elastic diffeological spaces}
\label{sec:Elastic}

The main result for the category of elastic spaces is that the left Kan extension of the tangent structure on euclidean spaces defines a tangent structure with scalar $\bbR$-mul\-ti\-pli\-ca\-tion (Theorem~\ref{thm:ElasticTanCat}). The proof of this statement is quite long and will be given in \cite{Blohmann:Elastic}. It uses a variety of techniques, some categorical, some geometric, and all of the results about the left Kan extension given in the preceeding Section~\ref{sec:LeftKanExt}.

\subsection{The tangent structure of elastic spaces}

The tangent structure of euclidean spaces consists of the tangent functor $\hat{T}: \Eucl \to \Eucl$ together with the natural transformations of the bundle projection $\hat{\pi}: \hat{T} \to \Id$, the zero section $\hat{0}: \Id \to \hat{T}$, the addition $\hat{+}: \hat{T}_2 \to \hat{T}$, the symmetric structure $\hat{\tau}: \hat{T}^2 \to \hat{T}^2$, and the vertical lift $\hat{\lambda}: \hat{T} \to \hat{T}^2$, which we from now one decorate with hats in order to distinguish them from their Kan extensions to diffeological spaces. The structure was spelled out explicitly in Section~\ref{sec:TangentEuclidean}.

We would like to extend this structure to diffeological spaces by applying the Kan extension functor $\Lanyy = \Lan_y y\Empty$. The Kan extension of the tangent functor will be denoted by
\begin{equation}
\label{eq:TDflgDef}
  T := \Lanyy \hat{T} : \Dflg \rightarrow \Dflg
  \,.
\end{equation}

\begin{Definition}
\label{def:elastic}
A diffeological space $X$ is called \textbf{elastic} if the following axioms hold:
\begin{itemize}
    
\item[(E1)] The natural morphisms
\begin{equation*}
  \theta_{k,X}: (\Lanyy \hat{T}_k) X 
  \longrightarrow T_k X
\end{equation*}
are isomorphisms for all $k > 1$.

\item[(E2)] There is a natural morphism $\tau_X: T^2 X \to T^2 X$, such that the diagram
\begin{equation*}
\begin{tikzcd}
(\Lanyy\hat{T}^2) X 
\ar[r, "(\Lanyy\hat{\tau})_X"] 
\ar[d, "\theta^2_X"']
&
(\Lanyy\hat{T}^2) X 
\ar[d, "\theta^2_X"]
\\
T^2 X \ar[r, "\tau_X"'] 
&
T^2 X
\end{tikzcd}    
\end{equation*}
commutes.

\item[(E3)] The natural morphism
\begin{equation*}
  \lambda_X: TX \xrightarrow{~(\Lanyy\hat{\lambda})_X~} 
  (\Lanyy \hat{T}^2)X
  \xrightarrow{~\theta^2_X~}
  T^2 X
\end{equation*}
is an induction.

\item[(E4)] The natural morphisms
\begin{equation*}
  \nu_{k,X} : T T_k X \longrightarrow
  T^2 X  \times_{TX}^{T\pi_X,T\pi_X} 
  \ldots \times_{TX}^{T\pi_X,T\pi_X}
  T^2 X
\end{equation*}
are injective for all $k > 1$.

\item[(E5)] For every finite set of positive integers $k_1, \ldots, k_n$ the diffeological space $X' := T_{k_1} \cdots T_{k_n} X$ satisfies axioms (E1) through (E4).

\end{itemize}
The full subcategory of elastic diffeological spaces will be denoted by $\Elst \subset \Dflg$.
\end{Definition}

\begin{Theorem}
\label{thm:ElasticTanCat}
The category of elastic diffeological spaces has a tangent structure given by the Kan extended tangent functor $T = \Lanyy\hat{T}$, bundle projection $\pi_X := (\Lanyy\hat{\pi})_X$, and zero section $0_X := (\Lanyy\hat{0})_X$, the addition
\begin{equation*}
  +_X := \theta_{2,X}^{-1} \circ (\Lanyy\hat{+})_X \circ \theta_{2,X}
  \,,
\end{equation*}
the symmetric structure $\tau_X$ of Axiom~(E2) of Definition~\ref{def:elastic}, and the vertical lift $\lambda_X$ of Axiom~(E3). Moreover, the Kan extension of the scalar $\bbR$-multiplication is a scalar $\bbR$-multiplication in the sense of Definition~\ref{def:RScalar}.
\end{Theorem}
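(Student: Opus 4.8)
The overall plan is to \emph{transport} each axiom of Definition~\ref{def:TangentStructure} from euclidean spaces, where it holds by Proposition~\ref{prop:EuclTangent}, to elastic spaces, exploiting that $\Lanyy$ preserves the vertical composition of natural transformations (equation~\eqref{eq:KanVertComp}) and hence every commutative diagram built from such composites. The obstruction is that $\Lanyy$ preserves neither the composition of endofunctors nor the fiber products $\hat{T}_k$: the structural maps $\hat{\tau},\hat{\lambda},\hat{+}$ live on $\hat{T}^2,\hat{T}^3,\hat{T}_k$, whose Kan extensions are $\Lanyy\hat{T}^n$ and $\Lanyy\hat{T}_k$ rather than the functors $T^n=(\Lanyy\hat{T})^n$ and $T_k$ appearing in the target diagrams. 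The bridge is provided by the comparison morphisms $\theta_{k,X}\colon \Lanyy\hat{T}_k X\to T_k X$ and the associator $\mu$ of~\eqref{eq:yKanMonoid}, and Axioms~(E1)--(E5) are exactly the hypotheses that make these bridges behave well. Throughout I would use that $\hat{T}$, hence every $\hat{T}_k$ and every composite, is a cosheaf, so that the relevant $\mu$-maps and $\theta^2_X=\mu_{\hat{T},\hat{T},X}$ are subductions by Proposition~\ref{prop:KanProdSubduction}, and that subductions are strong epimorphisms, so that identities of morphisms may be checked after precomposition with them.

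First I would record the cheap pieces. Since $y$ is dense, $\Lanyy\Id\cong\Id$, so $\pi$ and $0$ are immediately natural transformations $T\to\Id$ and $\Id\to T$; the pullbacks $T_k$ exist and are pointwise because $\Dflg$ is complete with pointwise limits (Theorem~\ref{thm:ConcQuasiCat}). To define $+,\tau,\lambda$ I would invoke Axioms~(E1), (E2), (E3): by~(E1) the maps $\theta_{k,X}$ are isomorphisms for $k>1$, so $+_X$ is transported from $\Lanyy\hat{+}$ through $\theta_{2,X}$; $\tau_X$ is given by~(E2) and is unique because $\theta^2_X$ is epi; and $\lambda_X=\theta^2_X\circ(\Lanyy\hat{\lambda})_X$ is an induction by~(E3). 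The abelian-group axioms for $(\pi,0,+)$ and the scalar-multiplication axioms of Definition~\ref{def:RScalar} are then pure diagram transport: each is a commutative diagram among $\hat{T}_1,\hat{T}_2,\hat{T}_3$ on euclidean spaces, and applying $\Lanyy$ and inserting the isomorphisms $\theta_2,\theta_3$ yields the corresponding diagram on $X$; the associativity diagram~(ii) and linearity diagrams~(iv)--(v) need precisely that these $\theta$'s are isomorphisms, i.e.\ Axiom~(E1).

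Next come the axioms involving $T^2$ and $T^3$, where the composition problem is genuine. For the symmetric structure I would apply $\Lanyy$ to $\hat{\tau}\circ\hat{\tau}=\hat{T}^2$ and to the braid relation, obtaining the same identities among the $\Lanyy\hat{\tau}$-maps; intertwining them with the $\tau_X$-maps via~(E2) and the naturality and associativity of $\mu$ (Propositions~\ref{prop:KanHorComp} and~\ref{prop:KanAssoc}), and using that the composite $\Lanyy\hat{T}^3 X\to T^3 X$ is a subduction (iterate Propositions~\ref{prop:KanProdSubduction} and~\ref{prop:KanStrongEpis}), I would cancel the epimorphism to deduce $\tau\circ\tau=\id$ and the braid relation on $T^3 X$. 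The compatibilities~\eqref{eq:TanFun2b} and~\eqref{eq:TanFun5} of $\tau$ with $+$ and $\lambda$, and the vertical-lift diagrams~\eqref{eq:TanFun3}, follow by the same recipe. For ``$T$ preserves the fiber products'' I would show that $\nu_{k,X}\colon TT_kX\to T_kTX$ is a subduction, by comparing it through $\theta_{k,X}$, through $\theta_{k,TX}$ (an isomorphism because $TX$ is elastic by Axiom~(E5)), and through the subductive $\mu$-maps to the euclidean isomorphism $\Lanyy\hat{\nu}_k$, and then invoke Axiom~(E4) that $\nu_{k,X}$ is injective; a morphism in $\Dflg$ that is both a subduction and injective (hence mono) is an isomorphism.

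The hardest axiom, and where I expect the real work, is that the vertical lift is a kernel, i.e.\ that~\eqref{eq:TanFun4} is a pointwise pullback. This is not an identity of morphisms, so it cannot be obtained by cancelling an epimorphism, and subductions alone do not transport a limit. My plan is to pass to the equivalent $\lambda_2$-reformulation recorded after Definition~\ref{def:TangentStructure} (\cite[Lem.~3.10]{CockettCruttwell:2014}), reducing the statement to the induction $\lambda_X$ of~(E3), and then to verify the pullback pointwise, using that limits in $\Dflg$ are pointwise together with the identifications of $T_2$ via~(E1) and of the relevant second-order fibers via~(E4); Axiom~(E3) is what upgrades the euclidean pullback to an honest kernel by making $\lambda_X$ a strong monomorphism onto the fiber. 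Finally, Axiom~(E5) closes the argument categorically: it ensures that every space $T_{k_1}\cdots T_{k_n}X$ occurring as a vertex of the higher diagrams is again elastic, so that all the $\theta$'s used there are isomorphisms and the constructed data assemble into a tangent structure \emph{on the category} $\Elst$, not merely on single objects.
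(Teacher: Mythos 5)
The paper does not actually prove Theorem~\ref{thm:ElasticTanCat} here: it is stated without proof and deferred to the companion paper \cite{Blohmann:Elastic}, so your proposal can only be measured against the strategy that Section~\ref{sec:LeftKanExtension} and the axioms of Definition~\ref{def:elastic} are visibly designed for. By that measure, your architecture is exactly the intended one: functoriality of $\Lanyy$ to transport commutative diagrams, the comparison maps $\theta_{k,X}$ and $\mu$ as bridges, Propositions~\ref{prop:KanProdSubduction} and~\ref{prop:KanStrongEpis} to make the comparison maps into $T^nX$ subductions, cancellation of these epimorphisms to transport identities, (E1)--(E3) to define $+_X$, $\tau_X$, $\lambda_X$ (with uniqueness of $\tau_X$ from $\theta^2_X$ being epi), (E4) to upgrade the subduction $\nu_{k,X}$ to an isomorphism (strong epi plus mono is iso, and injective implies mono since the point functor is faithful), and (E5) to keep all iterated spaces inside $\Elst$. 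Your reading of why (E4) only needs to demand injectivity is exactly right.

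There is, however, one genuine gap, at the step you yourself flag as the hardest: the kernel axiom \eqref{eq:TanFun4}. Reducing to the $\lambda_2$-formulation via \cite[Lem.~3.10]{CockettCruttwell:2014} is legitimate once the other axioms are in place, but your claim that Axiom~(E3) ``upgrades the euclidean pullback to an honest kernel by making $\lambda_X$ a strong monomorphism onto the fiber'' conflates two different things. An induction gives you that $\lambda_X$ (or $\lambda_{2,X}$) is an isomorphism onto its image equipped with the subspace diffeology; it says nothing about that image being the whole fiber, i.e.\ about the set-level assertion that every $w \in T^2X$ whose images under $\pi_{TX}$ and $T\pi_X$ lie in the zero sections is a vertical lift. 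That surjectivity is where the geometry enters: one must represent $w$ through the subduction $\theta^2_X\colon (\Lanyy\hat{T}^2)X \to T^2X$ by some $\hat{w} \in \hat{T}^2U$ on a plot $yU \to X$, observe that $\hat{w}$ itself need not lie in the euclidean kernel (only its image in $T^2X$ must), and then argue, controlling how tangent data on different plots get identified, that $w$ is nonetheless in the image of $\lambda_{2,X}$. Neither subductions nor inductions transport pullbacks by themselves, so this cannot be pure diagram bookkeeping; it is precisely the kind of geometric work the paper defers to \cite{Blohmann:Elastic}. A milder instance of the same caveat applies to the compatibility checks you elide before invoking ``pure diagram transport'' for the abelian-group and scalar axioms, e.g.\ that $\theta_{2,X}$ intertwines the two projections and $\Lanyy(\hat{+}\times\id)$ with $+_X\times_X\id$ on $T_3X$.
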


\subsection{Alternative axioms}

The axioms for abstract tangent structures are to hold for the entire category. This is why the Axiom~(E5) has to be included in the Definition~\ref{def:elastic} of elastic spaces, so that the application of $T_k$ does not lead out of the subcategory of elastic spaces. However, Rosick\'y's axioms still make sense pointwise for a single object $X$ that lies in an ambient category where $T_k$ is defined. This is our situation, which suggests the following more general concept.

\begin{Definition}
\label{def:weakelastic}
A diffeological space $X$ is called \textbf{weakly elastic} if the Axioms~(E1)-(E4) of Definition~\ref{def:elastic} hold.
\end{Definition}

On a weakly elastic space we still have most of the structure of differential calculus, like a Lie algebra of vector fields. We only have to observe that $TX$ and its fiber products may no longer share the same good properties.

\begin{Remark}
Theorem~\ref{thm:ElasticTanCat} still holds if Axiom~(E1) of Definition~\ref{def:elastic} is replaced with the following weaker version:
\begin{itemize}
\item[(E1')] The natural morphism
\begin{equation*}
  \theta_{k,X}: (\Lanyy \hat{T}_k) X 
  \longrightarrow T_k X
\end{equation*}
is an isomorphism for $k=2$ and an epimorphism for all $k > 2$.
\end{itemize}
As explained in the introduction, we need the stronger Axiom~(E1) so that we obtain a Cartan calculus.     
\end{Remark}

In earlier versions of the definition of elastic space, we have used the following axiom:
\begin{itemize}
\item[(E0)] The natural morphisms
\begin{equation}
\label{eq:GrenobleAxiom}
  \Lanyy(\hat{T}_k \hat{T}) X \longrightarrow T_k T X
\end{equation}
are isomorphisms for all $k \geq 1$.
\end{itemize}

\begin{Proposition}
The Axiom~(E0) implies the Axioms~(E1), (E2), and (E4) of Definition~\ref{def:elastic}.
\end{Proposition}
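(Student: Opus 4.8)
The plan is to establish the three axioms in the order (E2), (E1), (E4), in each case extracting the relevant comparison map as a factor of the isomorphism supplied by (E0) and then isolating it by a cancellation argument. Begin with (E2). For $k=1$ we have $\hat{T}_1=\hat{T}$ and $T_1=T$, so the (E0)-morphism $\Lanyy(\hat{T}_1\hat{T})X\to T_1 TX$ is precisely the comparison $\theta^2_X=\mu_{\hat{T},\hat{T},X}\colon\Lanyy(\hat{T}^2)X\to T^2X$ of~\eqref{eq:yKanMonoid}. Thus (E0) makes $\theta^2_X$ invertible, and I set $\tau_X:=\theta^2_X\circ(\Lanyy\hat{\tau})_X\circ(\theta^2_X)^{-1}$; the square of (E2) then commutes by construction, and since $\theta^2_X$ is in any case a subduction (Proposition~\ref{prop:KanProdSubduction}) this is the unique such $\tau_X$, so its naturality is inherited from that of $\hat{\tau}$.

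Next, (E1). The transformation $\theta_k\colon\Lanyy\hat{T}_k\Rightarrow T_k$ is natural in the space variable, and $X$ is a retract of $TX$ through $0_X$ and $\pi_X$ with $\pi_X\circ 0_X=\id_X$. Applying naturality of $\theta_k$ to $0_X$ and to $\pi_X$ exhibits $\theta_{k,X}$ as a retract of $\theta_{k,TX}$ in the arrow category; explicitly $(\theta_{k,X})^{-1}=(\Lanyy\hat{T}_k)(\pi_X)\circ(\theta_{k,TX})^{-1}\circ T_k(0_X)$ once $\theta_{k,TX}$ is known to be invertible. To invert $\theta_{k,TX}$, note that the (E0)-morphism factors as $\theta_{k,TX}\circ\mu_{\hat{T}_k,\hat{T},X}$, and $\mu_{\hat{T}_k,\hat{T},X}$ is a subduction, hence an epimorphism, because $\hat{T}_k$ is a cosheaf (Proposition~\ref{prop:KanProdSubduction}). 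Since $gf$ invertible together with $f$ epic forces $f$, and hence $g$, to be invertible, (E0) makes both $\mu_{\hat{T}_k,\hat{T},X}$ and $\theta_{k,TX}$ isomorphisms; therefore $\theta_{k,X}$ is an isomorphism for every $k>1$.

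Finally, (E4), which uses (E1) and (E2). Via the now-invertible $\theta_{k,X}$ I identify $TT_kX\cong(\Lanyy\hat{T})(\Lanyy\hat{T}_k)X$, onto which $\mu_{\hat{T},\hat{T}_k,X}$ is a subduction ($\hat{T}$ is a cosheaf). On euclidean spaces the tangent structure preserves the pullbacks $\hat{T}_k$ (Proposition~\ref{prop:EuclTangent}), so $\hat{\nu}_k\colon\hat{T}\hat{T}_k\to\hat{P}_k$ is a natural isomorphism, where $\hat{P}_k$ is the $k$-fold fibre product of $\hat{T}^2$ over $\hat{T}\hat{\pi}$; composing with the fibrewise $\hat{\tau}$, which by~\eqref{eq:TanFun2} converts $\hat{T}\hat{\pi}$-fibres into $\hat{\pi}\hat{T}$-fibres, yields a natural isomorphism $\hat{\sigma}_k\colon\hat{T}\hat{T}_k\xrightarrow{\cong}\hat{T}_k\hat{T}$ in $\End(\Eucl)$. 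Kan-extending and composing with the (E0)-isomorphism $\Lanyy(\hat{T}_k\hat{T})X\cong T_kTX$ gives an isomorphism $\Lanyy(\hat{T}\hat{T}_k)X\to T_kTX$. I claim this equals $\tau_X^{\times k}\circ\nu_{k,X}\circ(T\theta_{k,X})\circ\mu_{\hat{T},\hat{T}_k,X}$, where $\tau_X^{\times k}\colon P_k\to T_kTX$ is the isomorphism induced fibrewise by $\tau_X$. Granting this, the displayed composite is an isomorphism; as $\mu_{\hat{T},\hat{T}_k,X}$ is epic, the cancellation argument again forces every factor, in particular $\nu_{k,X}$, to be invertible, so $\nu_{k,X}$ is injective (indeed an isomorphism).

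The one delicate point, and the main obstacle, is the commutativity claim in the last step: the equality of two natural transformations $\Lanyy(\hat{T}\hat{T}_k)\Rightarrow T_kT$. Here the universal property of the left Kan extension does the decisive work. Since $\Lanyy(\hat{T}\hat{T}_k)=\Lan_y\,y(\hat{T}\hat{T}_k)$ and $y$ is full and faithful (Proposition~\ref{prop:LanRestrict}), natural transformations out of $\Lanyy(\hat{T}\hat{T}_k)$ are determined by their components on representables $yU$. On each $yU$ the maps $\mu$, $\theta_k$, $\nu_k$, and $\tau$ reduce to the Yoneda images of their euclidean counterparts, and the identity collapses to the defining equation $\hat{\tau}_U^{\times k}\circ\hat{\nu}_{k,U}=\hat{\sigma}_{k,U}$. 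The remaining verifications — that $P_k$, $\nu_k$, and $\tau$ restrict to $y\hat{P}_k$, $y\hat{\nu}_k$, and $y\hat{\tau}$ on representables, using that $y$ preserves pullbacks along submersions — are routine.
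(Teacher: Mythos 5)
The paper states this proposition without proof (its details are deferred to \cite{Blohmann:Elastic}), so there is no in-paper argument to compare against; judged on its own terms, your proof is correct and uses only results available in the paper. The three key moves are all sound. For (E2): the $k=1$ case of (E0) says exactly that $\theta^2_X=\mu_{\hat{T},\hat{T},X}$ is invertible, so conjugation defines $\tau_X$, and uniqueness plus naturality follow because $\theta^2_X$ is a subduction (Proposition~\ref{prop:KanProdSubduction}). For (E1): the (E0)-morphism of \eqref{eq:GrenobleAxiom}, which the paper leaves implicit but can only sensibly be $\theta_{k,TX}\circ\mu_{\hat{T}_k,\hat{T},X}$, is an isomorphism; since $\mu_{\hat{T}_k,\hat{T},X}$ is a subduction ($\hat{T}_k$ is a cosheaf), the cancellation ``$g\circ f$ iso and $f$ epi force $f$ and $g$ iso'' makes $\theta_{k,TX}$ invertible, and your retract argument along $0_X$ and $\pi_X$ correctly transports invertibility from $\theta_{k,TX}$ to $\theta_{k,X}$. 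For (E4): the euclidean exchange isomorphism $\hat{\sigma}_k=\hat{\tau}^{\times k}\circ\hat{\nu}_k\colon\hat{T}\hat{T}_k\to\hat{T}_k\hat{T}$ exists by Proposition~\ref{prop:EuclTangent} and \eqref{eq:TanFun2} (it is the ``commutativity of $\eFun{T}_2$ and $\eFun{T}$'' map written out in Section~\ref{sec:TangentEuclidean}), and once its Kan extension is identified with $\tau_X^{\times k}\circ\nu_{k,X}\circ T\theta_{k,X}\circ\mu_{\hat{T},\hat{T}_k,X}$, the same cancellation against the subduction $\mu_{\hat{T},\hat{T}_k,X}$ shows $\nu_{k,X}$ is an isomorphism, in particular injective. (You prove more than (E4) asks, which is consistent with the paper's remark that (E0) is strictly stronger than the axioms it implies.)

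One point needs sharper phrasing, though it is not a gap. Your appeal to the universal property of the left Kan extension speaks of natural transformations on $\Dflg$, but the composite $\tau_X^{\times k}\circ\nu_{k,X}\circ T\theta_{k,X}\circ\mu_{\hat{T},\hat{T}_k,X}$ involves $\tau$, which your construction defines only on spaces satisfying (E0). The repair is immediate: morphisms out of the colimit $\Lanyy(\hat{T}\hat{T}_k)X\cong\Colim_{yU\to X}y\hat{T}\hat{T}_kU$ are determined by precomposition with the cocone legs, so the naturality squares actually needed are only those over plots $yU\to X$; representables satisfy (E0) because Proposition~\ref{prop:LanRestrict} makes all comparison morphisms isomorphisms there, $\tau$ is natural on the full subcategory of spaces satisfying (E0), and the legs reduce the claimed equality to the defining identity $\hat{\tau}_U^{\times k}\circ\hat{\nu}_{k,U}=\hat{\sigma}_{k,U}$ on euclidean spaces, exactly as you indicate. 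Two facts you use tacitly deserve explicit citation: that $\pi_{TX}\circ\tau_X=T\pi_X$ (needed for $\tau_X^{\times k}$ to land in $T_kTX$) follows from \eqref{eq:KanVertComp} together with Proposition~\ref{prop:KanHorComp}, and that $y$ preserves the relevant fiber products follows since $y$ is full and faithful and limits in $\Dflg$ are computed pointwise.
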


While this proposition shows that Axiom~(E0) is logically stronger than (E1), (E2), and (E4), we currently do not know an example of an elastic space that does not satisfy the stronger Axiom~(E0).

\subsection{Stability properties of elastic spaces}

\begin{Proposition}
\label{prop:OpenElastic}
Let $X$ be a diffeological space. The following are equivalent:
\begin{itemize}

\item[(i)] $X$ is elastic.

\item[(ii)] $X$ has an open cover $\{S_i \to X \}$ by elastic spaces $S_i$.

\end{itemize}
\end{Proposition}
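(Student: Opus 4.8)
The implication (i)$\Rightarrow$(ii) is immediate: if $X$ is elastic I take the one-element cover $\{X \to X\}$. The substance is (ii)$\Rightarrow$(i), and the plan is to verify each axiom (E1)--(E5) for $X$ from the corresponding data on the $S_i$, exploiting that every endofunctor occurring in the axioms carries a local bundle structure over $\Id$. Indeed, $\Lanyy\hat{T}_k$, $\Lanyy\hat{T}^2$, $T=\Lanyy\hat{T}$, $T_k$, $T^2$, $TT_k$, and the fibre products $T^2\times_{TX}\cdots\times_{TX}T^2$ are all local bundles over $\Id$: the Kan extensions of the cosheaves $\hat{T}_k,\hat{T}^2$ are local by Proposition~\ref{prop:KanLocal}, and fibre products and compositions of local bundles are local by Proposition~\ref{prop:LocBundProd}. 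Consequently, for each such functor $F$ the open cover $\{S_i \to X\}$ induces an open cover $\{FS_i \to FX\}$ with $FS_i$ the fibre of $FX \to X$ over $S_i$ (Proposition~\ref{prop:LocBudInduction}), and all the structure maps $\theta_{k}$, $\theta^2$, $\Lanyy\hat\tau$, $\lambda$, $\nu_k$ restrict along $S_i \to X$ to their $S_i$-components.

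With this in place, axioms (E1) and (E3) follow from Proposition~\ref{prop:LocFuncCover}: $\theta_{k,X}$ and $\lambda_X$ are morphisms of local bundles over $\Id$, each is an isomorphism (resp.\ an induction) on the cover because the $S_i$ are elastic, hence an isomorphism (resp.\ an induction) on $X$. For (E4) I argue on underlying sets: given $a,b \in TT_kX$ with $\nu_{k,X}(a)=\nu_{k,X}(b)$, their common image under $TT_kX \to X$ is a point $x$; choosing $S_i$ with $x \in S_i$ places both $a$ and $b$ in the fibre $TT_kS_i$, where $\nu_{k,S_i}$ (the restriction of $\nu_{k,X}$) is injective by elasticity of $S_i$, so $a=b$.

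The hard part is (E2), because it asserts the \emph{existence} of $\tau_X$ rather than a property of a fixed morphism, so Proposition~\ref{prop:LocFuncCover} does not apply directly. Here I use that $\theta^2_X$ is a subduction for every diffeological space (Proposition~\ref{prop:KanProdSubduction}, as $\hat{T}$ is a cosheaf), so that any factorisation of $\theta^2_X \circ (\Lanyy\hat\tau)_X$ through $\theta^2_X$ is unique. For each $i$ the map $\tau_{S_i}$ exists by elasticity; on the overlaps $T^2 S_{ij} = T^2 S_i \cap T^2 S_j$ both $\tau_{S_i}$ and $\tau_{S_j}$ restrict to factorisations of $\theta^2_{S_{ij}}\circ(\Lanyy\hat\tau)_{S_{ij}}$ through the subduction $\theta^2_{S_{ij}}$, hence coincide by uniqueness. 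Since $\{T^2S_i\}$ is an open cover of $T^2 X$, the coequaliser presentation~\eqref{eq:LocFun1a} lets me glue the $\tau_{S_i}$ to a morphism $\tau_X\colon T^2X \to T^2X$; the defining square for $X$ then holds, because it holds after restriction to the cover $\{(\Lanyy\hat{T}^2)S_i\}$ of $(\Lanyy\hat{T}^2)X$ and morphisms out of $(\Lanyy\hat{T}^2)X$ are determined by such restrictions.

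Finally, (E5) follows by bootstrapping on what has been proved. Given the cover $\{S_i\}$ by elastic spaces and any $X' := T_{k_1}\cdots T_{k_n}X$, locality of $T_{k_1}\cdots T_{k_n}$ yields an open cover $\{T_{k_1}\cdots T_{k_n}S_i \to X'\}$, each member of which is elastic since $S_i$ satisfies (E5). Applying the already-established implication (ii)$\Rightarrow$(i) for axioms (E1)--(E4) to $X'$ shows that $X'$ satisfies (E1)--(E4), which is precisely (E5) for $X$. Hence $X$ is elastic. The only genuinely delicate point in this scheme is the descent argument for (E2); the remaining axioms reduce mechanically to the locality results of Section~\ref{sec:LeftKanExt}.
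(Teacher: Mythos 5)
The paper states Proposition~\ref{prop:OpenElastic} without proof (it is deferred to the companion paper), so your proposal can only be measured against the toolkit the paper visibly builds for this purpose; by that measure you are on target. Your decomposition --- (E1) and (E3) via Proposition~\ref{prop:LocFuncCover}, (E4) by a hand-made injectivity argument (necessary, since monomorphisms are not among the classes listed in Proposition~\ref{prop:LocFuncCover}), (E2) by uniqueness of factorizations through the subduction $\theta^2$ plus gluing along the coequalizer presentation~\eqref{eq:LocFun1a}, and (E5) by bootstrapping the already-proved (E1)--(E4) statement --- is exactly the local-to-global scheme that Propositions~\ref{prop:KanLocal}, \ref{prop:LocBundProd}, \ref{prop:LocBudInduction}, \ref{prop:LocFuncCover}, and~\ref{prop:KanProdSubduction} are designed to support, and the overall architecture is sound, including the non-circularity of the (E5) bootstrap.

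Two details should be made explicit. First, in (E2), before you may speak of $\tau_{S_i}$ ``restricting'' to the overlap, you must know that $\tau_{S_i}$ maps $T^2 S_{ij}$ into $T^2 S_{ij}$; this is not part of Axiom~(E2) as stated. It does follow: the euclidean $\hat{\tau}$ commutes with the bundle projection $\hat{T}^2 \to \Id$, the functor $\Lanyy$ preserves this commutative triangle, $\theta^2$ is a morphism of bundles over $\Id$, and $\theta^2_{S_i}$ is an epimorphism, so the defining square of (E2) forces $\mathrm{proj}_{S_i} \circ \tau_{S_i} = \mathrm{proj}_{S_i}$, whence $\tau_{S_i}$ preserves preimages of open subsets of $S_i$. (Alternatively, glue the composites $T^2 S_i \xrightarrow{\tau_{S_i}} T^2 S_i \hookrightarrow T^2 X$ directly; their agreement on overlaps follows from naturality of $\theta^2$ and $\Lanyy\hat{\tau}$ together with epimorphy of $\theta^2_{S_{ij}}$, with no restriction statement needed.) Second, Proposition~\ref{prop:LocBundProd} covers fiber products over $\Id$ and compositions, but the codomain of $\nu_{k}$ is a fiber product over $T$ along $T\pi$; its locality is true but requires the separate (easy) observation that pullbacks restrict along open inductions --- or you can sidestep it entirely, since your (E4) argument really only uses locality of the domain $TT_k$, naturality of $\nu_k$, and injectivity of the comparison $T^2 S_i \times_{TS_i} \cdots \times_{TS_i} T^2 S_i \to T^2 X \times_{TX} \cdots \times_{TX} T^2 X$, which holds because each factor is an induction. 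Neither point affects the structure of the proof; both are routine to fill in.
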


\begin{Corollary}
\label{cor:MfldElastic}
Let $X$ be a diffeological manifold modelled on the diffeological vector space $A$. Then $X$ is elastic if and only if $A$ is elastic.
\end{Corollary}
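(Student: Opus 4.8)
The plan is to deduce both implications from the local characterization of elasticity in Proposition~\ref{prop:OpenElastic}, combined with the atlas of $X$ and the translation symmetry of the model space $A$. Throughout I will use two auxiliary facts. First, elasticity is invariant under isomorphisms of diffeological spaces: Axioms~(E1)--(E5) are conditions on the natural morphisms $\theta_{k,X}$, $\tau_X$, $\lambda_X$, $\nu_{k,X}$, which are natural in $X$, so an isomorphism $X \cong Y$ transports each condition (an isomorphism, an induction, or an injection is carried to one of the same type by the naturality squares). Second, the class $\Elst$ of elastic spaces is closed under passage to open subsets, one of the stability properties stated for elastic spaces. By definition, a diffeological manifold modelled on $A$ admits an open cover $\{S_i \to X\}$ in which each $S_i$ is isomorphic to an open subset $V_i \subseteq A$.

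For the implication ``$A$ elastic $\Rightarrow$ $X$ elastic'' I would argue directly. If $A$ is elastic, then each open subset $V_i \subseteq A$ is elastic by closure under open subsets, and hence each chart $S_i \cong V_i$ is elastic by isomorphism invariance. Thus $\{S_i \to X\}$ is an open cover of $X$ by elastic spaces, and Proposition~\ref{prop:OpenElastic} in the direction (ii)$\Rightarrow$(i) gives that $X$ is elastic.

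For the converse ``$X$ elastic $\Rightarrow$ $A$ elastic'' the atlas alone is insufficient: the charts only exhibit certain open subsets of $A$ as elastic, whereas $A$ itself is typically not a proper open subset of anything already known to be elastic. Here I would exploit the linear structure. Assuming $X \neq \emptyset$, fix a chart $S \subseteq X$ open with $S \cong V$ for some open $V \subseteq A$; since $X$ is elastic, its open subset $S$ is elastic by the same closure property, hence so is $V$. Because $A$ is a diffeological vector space, for each $a \in A$ the translation $t_a \colon A \to A$, $x \mapsto x + a$, is smooth with smooth inverse $t_{-a}$, and is therefore an automorphism of $A$ in $\Dflg$; in particular it is a $D$-homeomorphism carrying elastic open sets to elastic open sets. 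Picking $a_0 \in V$ and setting $V_0 := t_{-a_0}(V)$ yields an elastic open neighbourhood of $0$, and the family $\{\,a + V_0 = t_a(V_0)\,\}_{a \in A}$ is an open cover of $A$ by elastic spaces, since $0 \in V_0$ forces $a \in a + V_0$. Proposition~\ref{prop:OpenElastic} in the direction (ii)$\Rightarrow$(i) then gives that $A$ is elastic.

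The main obstacle is precisely this converse step: upgrading the local elasticity witnessed by a single chart near an arbitrary point to global elasticity of all of $A$. The forward direction is a routine application of the cover characterization, but the converse genuinely needs the vector space structure of $A$ — specifically that translations are diffeomorphisms — to propagate elasticity from one neighbourhood of $0$ to a cover of the entire space. I would also record that the edge case $X = \emptyset$ must be excluded by convention (otherwise the converse fails vacuously), i.e. that ``modelled on $A$'' is taken to provide at least one chart.
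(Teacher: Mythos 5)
Your proof is correct and takes essentially the same route as the paper: the corollary is meant to follow from Proposition~\ref{prop:OpenElastic} combined with the stability of elasticity under isomorphism and restriction to open subsets, with translations of the vector space $A$ supplying the elastic open cover needed for the converse direction. The paper's own proof is a one-line appeal to Proposition~\ref{prop:OpenElastic}; your write-up just makes explicit the translation argument (and the $X = \emptyset$ caveat) that the paper leaves implicit.
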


\begin{Corollary}
\label{cor:CoproductsElastic}
Let $\{X_i\}_{i \in I}$ be a small family of diffeological spaces. The following are equivalent:
\begin{itemize}

\item[(i)] The coproduct $\sqcup_i X_i$ is elastic.

\item[(ii)] Every $X_i$ is elastic.

\end{itemize}
\end{Corollary}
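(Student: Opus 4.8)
The plan is to show that for the coproduct $X = \sqcup_{i} X_i$ each of the axioms (E1)--(E5) holds for $X$ if and only if it holds for every $X_i$, and then to read off the stated equivalence. The structural input is that all the functors entering the axioms distribute over the coproduct. Since $\hat{T}$ and the fiber products $\hat{T}_k$ are cosheaves, and cosheaves are closed under composition, Corollary~\ref{cor:KanCoproducts2} shows that $T = \Lanyy\hat{T}$ and every $\Lanyy\hat{T}_k$ preserve coproducts. Moreover the functor of points $\Dflg \to \Set$ has both adjoints (Proposition~\ref{prop:ConcSheafProperties}), hence preserves limits and colimits; combined with disjointness of coproducts this gives, at the level of underlying sets, the extensivity identities $|T_k X| \cong \sqcup_i |T_k X_i|$, $|T^2 X| \cong \sqcup_i |T^2 X_i|$, and more generally $|T_{k_1}\cdots T_{k_n} X| \cong \sqcup_i |T_{k_1}\cdots T_{k_n} X_i|$, since a fiber product over a disjoint base only pairs points lying over the same summand. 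First I would record these decompositions, together with the observation that $\{X_i \hookrightarrow X\}$ is an open cover of $X$ in the $D$-topology, each inclusion being an open induction.

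Next I would treat the axioms one at a time, using that all the natural transformations in question are morphisms of local bundles over $\Id$: the iterated tangent functors and their fiber products are local by Proposition~\ref{prop:KanLocal} and Proposition~\ref{prop:LocBundProd}. For (E1) and (E3) the morphisms $\theta_{k,X}$ and $\lambda_X$ are such morphisms, so Proposition~\ref{prop:LocFuncCover} applied to the cover $\{X_i \to X\}$ shows directly that $\theta_{k,X}$ is an isomorphism (resp.\ $\lambda_X$ is an induction) if and only if each $\theta_{k,X_i}$ is an isomorphism (resp.\ each $\lambda_{X_i}$ is an induction). For (E4) the relevant condition is injectivity of $\nu_{k,X}$, which is not among the properties listed in Proposition~\ref{prop:LocFuncCover}; here I would pass to underlying sets, where the decompositions above give $|\nu_{k,X}| \cong \sqcup_i |\nu_{k,X_i}|$, a disjoint union of maps that is injective precisely when each summand map is. For (E2) I would use that $\theta^2_X$ is a subduction (Proposition~\ref{prop:KanProdSubduction}) and decomposes as $\sqcup_i \theta^2_{X_i}$ with each $\theta^2_{X_i}$ a subduction; the Kan extension $(\Lanyy\hat\tau)_X = \sqcup_i (\Lanyy\hat\tau)_{X_i}$ preserves summands, so a descent $\tau_X$ along $\theta^2_X$ exists and preserves the clopen pieces $T^2 X_i$ if and only if each $(\Lanyy\hat\tau)_{X_i}$ descends to some $\tau_{X_i}$; when the $\tau_{X_i}$ exist one simply sets $\tau_X := \sqcup_i \tau_{X_i}$.

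With the equivalences for (E1)--(E4) established for an arbitrary coproduct, Axiom~(E5) follows formally: for any tuple $(k_1,\dots,k_n)$ the space $X' := T_{k_1}\cdots T_{k_n} X$ again decomposes as $X' \cong \sqcup_i X'_i$ with $X'_i := T_{k_1}\cdots T_{k_n} X_i$, so applying the (E1)--(E4) equivalences to the coproduct $X'$ shows that $X'$ satisfies (E1)--(E4) if and only if every $X'_i$ does. Hence $X$ satisfies (E5) if and only if every $X_i$ does. Assembling the five equivalences yields that $X = \sqcup_i X_i$ is elastic if and only if every $X_i$ is elastic. As a cross-check, the implication from ``every $X_i$ elastic'' to ``$X$ elastic'' is also immediate from Proposition~\ref{prop:OpenElastic}, since $\{X_i \to X\}$ is then an open cover of $X$ by elastic spaces.

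The main obstacle I expect is the extensivity step: making precise that the fiber-product and iterated tangent functors genuinely distribute over the coproduct, that is, $T_k(\sqcup_i X_i) \cong \sqcup_i T_k X_i$ as diffeological spaces and not merely on underlying sets. This rests on disjointness of coproducts and pullback-stability of monomorphisms and epimorphisms in the quasitopos $\Dflg$ (Proposition~\ref{prop:ConcSheafProperties}), together with the coproduct-preservation of the cosheaf-induced functor $T$. For the injectivity argument in (E4) the set-level identity supplied by the functor of points is enough, but for the clean decomposition used in (E2) and (E5) one wants the diffeological isomorphism, whose verification is the one place where the special structure of $\Dflg$ must be invoked with care rather than by abstract nonsense.
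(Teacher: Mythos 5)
Your argument is correct in substance, but it takes a genuinely different and much longer route than the paper, which obtains the statement as an immediate corollary of Proposition~\ref{prop:OpenElastic}: the summands $X_i$ are disjoint $D$-open subspaces covering $X = \sqcup_i X_i$, so elasticity of all $X_i$ gives elasticity of $X$ by that proposition (the half you yourself note as a cross-check), while the converse direction follows from the closure of elastic spaces under restriction to open subspaces, which the paper states (without proof) alongside Proposition~\ref{prop:OpenElastic}. Your axiom-by-axiom verification essentially re-proves that open-restriction closure in the special case of clopen summands; what this buys is self-containedness, since you only invoke Propositions~\ref{prop:KanLocal}, \ref{prop:LocBundProd}, \ref{prop:LocFuncCover}, \ref{prop:KanProdSubduction} and Corollary~\ref{cor:KanCoproducts2} rather than an unproven stability statement. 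Two citations should be tightened, though neither is a real gap. First, Proposition~\ref{prop:LocFuncCover} is existential in the cover, so it directly yields only the implication from the summands to $X$; the reverse implication (from $X$ to each $X_i$) is not a \emph{direct} application but follows from locality, since $\theta_{k,X_i}$ and $\lambda_{X_i}$ are pullbacks of $\theta_{k,X}$ and $\lambda_X$ along the open inductions $X_i \to X$, and isomorphisms, inductions and subductions are stable under pullback (Proposition~\ref{prop:ConcSheafProperties}). Second, the diffeological decomposition $T_{k_1}\cdots T_{k_n}(\sqcup_i X_i) \cong \sqcup_i T_{k_1}\cdots T_{k_n}X_i$, which you flag at the end as the delicate extensivity step, does not need a hand-rolled argument, and pullback-stability of monomorphisms and epimorphisms is not the relevant ingredient: the iterated functors are local bundles over $\Id$ by Propositions~\ref{prop:KanLocal} and~\ref{prop:LocBundProd}, local bundles are cosheaves by Proposition~\ref{prop:LocBundCosheaf}, and applying the cosheaf coequalizer~\eqref{eq:LocFun1b} to the disjoint open cover $\{X_i\}$ yields the decomposition at once.
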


\begin{Proposition}
\label{prop:ProductElastic}
Finite products of elastic spaces are elastic.
\end{Proposition}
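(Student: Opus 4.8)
The plan is to reduce the whole statement to the single fact that the tangent functor, and every functor built from it, preserves finite products, together with the observation that a natural transformation between two product-preserving functors is automatically compatible with the product decomposition. First I would record that the euclidean functors $\hat T$, $\hat T_k$, and $\hat T^2$ preserve finite products, which is immediate from the explicit formulas of Section~\ref{sec:TangentEuclidean}, e.g.\ $\hat T(U\times V)\cong\hat TU\times\hat TV$. By Proposition~\ref{prop:FpreservesProducts} the Kan extensions $T=\Lanyy\hat T$, $\Lanyy\hat T_k$, and $\Lanyy\hat T^2$ then preserve finite products as well. Since finite limits commute with finite products, the fibre-product functors $T_k=T\times_\Id\cdots\times_\Id T$, the iterate $T^2$, the composites $TT_k$, and the codomain $T^2\times_{TX}\cdots\times_{TX}T^2$ of $\nu_{k,X}$ likewise preserve finite products.

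Next I would prove the elementary lemma that if $F,G\colon\Dflg\to\Dflg$ preserve finite products and $\phi\colon F\to G$ is natural, then $\phi_{X\times Y}$ coincides with $\phi_X\times\phi_Y$ under the canonical isomorphisms $F(X\times Y)\cong FX\times FY$ and $G(X\times Y)\cong GX\times GY$; this is a one-line consequence of applying naturality of $\phi$ to the two projections $\pr_X$ and $\pr_Y$. Applying the lemma to the comparison maps $\theta_k$, to $\theta^2=\mu_{\hat T,\hat T}$, to the Kan-extended transformations $\Lanyy\hat\tau$ and $\Lanyy\hat\lambda$, and to $\nu_k$ shows that each of these splits as a product over the two factors.

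With these ingredients the binary case becomes routine, and I would phrase it as a Main Lemma: a product $Z\times W$ of weakly elastic spaces (Definition~\ref{def:weakelastic}) is weakly elastic. Indeed, (E1) holds because $\theta_{k,Z\times W}\cong\theta_{k,Z}\times\theta_{k,W}$ is a product of isomorphisms; for (E2) I set $\tau_{Z\times W}:=\tau_Z\times\tau_W$ and note that its defining square is the product of the commuting squares for $Z$ and $W$, hence commutes; (E3) holds because $\lambda_{Z\times W}\cong\lambda_Z\times\lambda_W$ and a product of inductions is an induction (write $f\times g$ as a composite of pullbacks of $f$ and of $g$, and use that strong monomorphisms are stable under pullback, by Proposition~\ref{prop:ConcSheafProperties}, and under composition); and (E4) holds because $\nu_{k,Z\times W}\cong\nu_{k,Z}\times\nu_{k,W}$ is a product of injective maps, the functor of points being product-preserving.

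Finally I would deduce the proposition for elastic $X$ and $Y$. Axioms (E1)--(E4) for $X\times Y$ are the Main Lemma. For (E5), fix $k_1,\dots,k_n$; product-preservation of each $T_{k_i}$ gives $T_{k_1}\cdots T_{k_n}(X\times Y)\cong(T_{k_1}\cdots T_{k_n}X)\times(T_{k_1}\cdots T_{k_n}Y)$, and both factors are weakly elastic by (E5) for $X$ and for $Y$, so their product satisfies (E1)--(E4) by the Main Lemma, which is precisely (E5) for $X\times Y$. An induction on the number of factors, with the terminal object (a point, trivially elastic) as base case, extends the conclusion to arbitrary finite products. I expect the only delicate point to be the bookkeeping that the comparison maps $\theta_k$, $\theta^2$, and $\nu_k$ are genuinely product-compatible; once product-preservation of all the functors in sight is established, the product lemma renders this automatic, so no substantial obstacle remains.
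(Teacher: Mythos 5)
Your proposal cannot be compared against an in-paper proof: the paper states Proposition~\ref{prop:ProductElastic} without proof and defers the argument to the companion paper \cite{Blohmann:Elastic}. Judged on its own merits, your argument is correct and uses only facts available in the paper. The skeleton --- (a) $\hat{T}$, $\hat{T}_k$, $\hat{T}^2$ preserve finite products, hence so do their Kan extensions by Proposition~\ref{prop:FpreservesProducts}; (b) fiber products and composites of product-preserving functors preserve finite products; (c) a natural transformation between product-preserving functors decomposes as the product of its components; (d) hence each of $\theta_k$, $\theta^2$, $\Lanyy\hat{\tau}$, $\lambda$, $\nu_k$ evaluated at $Z \times W$ is the product of its values at $Z$ and $W$ --- is exactly the right reduction, and the per-axiom verifications go through: isomorphisms, commuting squares, inductions, and injections are all stable under binary products (for inductions via your factorization $f \times g = (f \times \id) \circ (\id \times g)$ together with pullback stability of strong monomorphisms from Proposition~\ref{prop:ConcSheafProperties}; for injectivity because the underlying-set functor preserves limits). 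The passage from your Main Lemma on weakly elastic spaces to Axiom~(E5) via $T_{k_1} \cdots T_{k_n}(X \times Y) \cong (T_{k_1} \cdots T_{k_n} X) \times (T_{k_1} \cdots T_{k_n} Y)$ is also sound. Three small points to tighten. First, your step (b) for $T_k$ already needs the lemma of step (c) applied to $\pi\colon T \to \Id$ (and, for the codomain of $\nu_k$, to $T\pi\colon T^2 \to T$), since the fiber product over $X \times Y$ only decomposes once the structure maps do; so prove the lemma before step (b). Second, closure of strong monomorphisms under composition is a general categorical fact, but it is not among the properties listed in Proposition~\ref{prop:ConcSheafProperties}, so it deserves a one-line justification. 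Third, in (E2) you should remark that the morphism $\tau_{Z \times W} := \tau_Z \times \tau_W$ you construct is automatically the unique morphism filling the square --- and hence natural --- because $\theta^2_{Z \times W}$ is a subduction (Proposition~\ref{prop:KanProdSubduction}). None of these affects correctness.
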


\begin{Proposition}
\label{prop:RetractsElastic}
Retracts of elastic spaces are elastic.
\end{Proposition}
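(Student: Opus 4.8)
The plan is to deduce each axiom for a retract $X$ of an elastic space $Y$ from the corresponding axiom for $Y$ by pure functoriality, since every axiom of Definition~\ref{def:elastic} is phrased through natural transformations of endofunctors of $\Dflg$. Fix $s\colon X\to Y$ and $r\colon Y\to X$ with $r\circ s=\id_X$. First I would record the following elementary principle. If $\alpha\colon F\to G$ is any natural transformation of endofunctors $F,G\colon\Dflg\to\Dflg$, then the naturality squares for $s$ and for $r$ paste into
\begin{equation*}
\begin{tikzcd}
FX \ar[r,"Fs"] \ar[d,"\alpha_X"'] & FY \ar[r,"Fr"] \ar[d,"\alpha_Y"] & FX \ar[d,"\alpha_X"] \\
GX \ar[r,"Gs"'] & GY \ar[r,"Gr"'] & GX
\end{tikzcd}
\end{equation*}
whose horizontal composites are $F(r\circ s)=\id_{FX}$ and $G(r\circ s)=\id_{GX}$. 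Hence $\alpha_X$ is a retract of $\alpha_Y$ in the arrow category of $\Dflg$, and therefore inherits from $\alpha_Y$ every property that is stable under retracts---in particular being an isomorphism, a monomorphism, a strong monomorphism (induction), or a strong epimorphism (subduction).

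With this principle in hand, Axioms (E1), (E3), and (E4) are immediate. Applying it to the comparison transformation $\theta_k\colon\Lanyy\hat{T}_k\to T_k$ shows that $\theta_{k,X}$ is an isomorphism for all $k>1$, since $\theta_{k,Y}$ is; this is (E1). The vertical lift $\lambda=\theta^2\circ\Lanyy\hat\lambda\colon T\to T^2$ is a composite of natural transformations, hence natural, and $\lambda_Y$ is an induction, so $\lambda_X$ is an induction; this is (E3). Finally, applying the principle to the natural comparison $\nu_k$ of Axiom~(E4), which for $k>1$ is injective (a monomorphism) on $Y$, shows that $\nu_{k,X}$ is injective; this is (E4).

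Axiom (E2) is the only one that asserts the existence of a morphism rather than a property of a fixed one, and it is the step I expect to require the most care. Here I would define $\tau_X:=T^2r\circ\tau_Y\circ T^2s$, with $\tau_Y$ the symmetric structure supplied by (E2) for $Y$, and check that it fits into the defining square of (E2), namely $\tau_X\circ\theta^2_X=\theta^2_X\circ(\Lanyy\hat\tau)_X$. This is a short diagram chase: one uses naturality of $\theta^2=\mu_{\hat{T},\hat{T}}$ and of $\Lanyy\hat\tau$ along $s$ to rewrite $T^2s\circ\theta^2_X$ as $\theta^2_Y\circ(\Lanyy\hat{T}^2)s$, then the relation $\tau_Y\circ\theta^2_Y=\theta^2_Y\circ(\Lanyy\hat\tau)_Y$ from (E2) for $Y$, then naturality of $\Lanyy\hat\tau$ and of $\theta^2$ once more, and finally $T^2(r\circ s)=\id_{T^2X}$ to collapse $T^2r\circ T^2s$. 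The resulting $\tau_X$ is automatically the unique one, because $\theta^2_X$ is a subduction by Proposition~\ref{prop:KanProdSubduction}.

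It remains to treat (E5), which reduces to the preceding steps. For a finite sequence $k_1,\dots,k_n$ of positive integers the composite $\Phi:=T_{k_1}\cdots T_{k_n}$ is a functor, so applying it to $r\circ s=\id_X$ gives $\Phi r\circ\Phi s=\id_{\Phi X}$; thus $X':=\Phi X$ is a retract of $Y':=\Phi Y$. Since $Y$ is elastic, (E5) for $Y$ guarantees that $Y'$ satisfies (E1)--(E4). Re-running the arguments of the two previous paragraphs with $(Y',\Phi s,\Phi r)$ in place of $(Y,s,r)$ then shows that $X'$ satisfies (E1)--(E4), which is precisely (E5) for $X$. Hence $X$ is elastic. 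The same reasoning, without invoking (E5), shows more basically that a retract of a weakly elastic space is weakly elastic.
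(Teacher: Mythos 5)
Your proof is correct. Note that the paper itself states Proposition~\ref{prop:RetractsElastic} without proof (the argument is deferred to the companion paper \cite{Blohmann:Elastic}), so there is no in-paper proof to compare against; judged on its own merits, your argument is complete. The key observation---that for any natural transformation $\alpha\colon F\to G$ of endofunctors of $\Dflg$, the pair of naturality squares for $s$ and $r$ exhibits $\alpha_X$ as a retract of $\alpha_Y$ in the arrow category---is exactly the right lever, since isomorphisms, monomorphisms (which in $\Dflg$ coincide with injections), strong monomorphisms, and strong epimorphisms are all closed under retracts (for the lifting-property classes this is the standard retract argument). This disposes of (E1), (E3), (E4) uniformly, and your treatment of (E2) correctly recognizes the one genuine subtlety: $\tau$ is not a natural transformation defined on all of $\Dflg$, so you cannot invoke the principle directly and must instead construct $\tau_X := T^2r\circ\tau_Y\circ T^2s$ and chase the defining square; your chase (naturality of $\theta^2$ and $\Lanyy\hat{\tau}$ along $s$, the square for $Y$, naturality again, then $T^2(r\circ s)=\id$) is valid, and the uniqueness via the subduction $\theta^2_X$ (Proposition~\ref{prop:KanProdSubduction}) also guarantees, for free, that the resulting family $\tau_X$ is natural across elastic spaces, as the definition requires. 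Finally, your reduction of (E5) via functoriality of $\Phi=T_{k_1}\cdots T_{k_n}$ is clean, and it correctly isolates the fact that your (E1)--(E4) arguments only use (E1)--(E4) for $Y$, which is what makes the bootstrapping---and your closing remark that retracts of weakly elastic spaces are weakly elastic---go through.
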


\section{Examples}
\label{sec:Examples}

The conditions for a diffeological space to be elastic are quite strong. Limits, colimits, subspaces, and mapping spaces of elastic spaces are generally no longer elastic. If by considering elastic spaces we have given away many of the conventient properties of the category of diffeological spaces, the question arises whether there are enough interesting examples and applications for the concept to be useful.

We have already stated that finite products and small coproducts of elastic spaces are elastic. We have also seen that elastic spaces are stable under retracts, which allows for the construction of elastic spaces with rather benign singular behaviour. Basic examples are manifolds with corners \cite{Joyce:2012} and cusps.

The original example that motivated the concept of diffeology by Souriau is that of diffeological groups. Our main result states that a diffeolgical group is elastic if and only if the vertical lift $\lambda_G: TG \to T^2 G$ is an induction (Theorem~\ref{thm:ElasticGroups}). This mild condition is needed to avoid that the bracket of vector fields takes values in a tangent bundle with a weaker diffeology, as is the case for vector fields on groups on a $C^k$-manifold. This shows that almost all diffeological groups that come to mind are elastic. Even a seemingly pathological group like the quotient $\bbR/\bbQ$ is elastic. 

Every diffeological vector space has an underlying abelian group, so that we can apply the characterization of diffeological groups. It follows that a diffeological vector space $A$ is elastic if the natural map $T_0 A \to T_0 T_0 A$ that maps $v_a$ to the tangent vector represented by the path $t \mapsto t v_a$ is an induction. A diffeological manifold modelled on $A$ is elastic if and only if $A$ is elastic. A stronger condition which ensures elasticity is that $A \to T_0 A$ is an isomorphism, which yields a trivialization $TA \cong A \times A$ of the tangent bundle. We call such vector spaces tangent stable. All fine diffeological vector spaces have this property.

If $A$ is tangent stable then the diffeological mapping space $\intHom(X,A)$ is elastic for all $X$. An important example is the algebra $C^\infty(X) \cong \intHom(X,\bbR)$ of smooth functions on $X$. The diffeological space of sections $\Gamma(M,F)$ of a smooth fiber bundle $\rho: F \to M$ is elastic (Theorem~\ref{thm:FieldsElastic}). As expected, its tangent space is given by the sections of the vertical tangent bundle $\ker T\rho \to M$. As a corollary, the diffeological space $\intHom(M,N)$ of smooth maps of manifolds is elastic, the tangent space being given by
\begin{equation*}
  T\intHom(M,N) \cong \intHom(M, TN)
  \,.
\end{equation*}
More examples can be constructed by forming retracts, which allows for mildly singular situations.

\subsection{Manifolds with corners}

Consider the set $[0,\infty) \subset \bbR$ equipped with the subspace diffeology. Every smooth map $p \in U \to \bbR$ with image $p(U) \subset [0,\infty)$ has vanishing derivatives to all orders at every point $u_0$ with $p(u_0) =  0$. It follows that
\begin{equation*}
  T_0 [0,\infty) = 0 
  \,.
\end{equation*}
The tangent space at an interior point $x \in (0,\infty)$ is given by $T_x [0,\infty) = \bbR$. 

We want to show that $[0,\infty)$ is elastic. For this, we consider the maps
\begin{align*}
  \pi: \bbR &\longrightarrow [0,\infty)
  \\
  x &\longmapsto x^2 
  \\
\intertext{and}
  \sigma: [0, \infty) &\longrightarrow \bbR
  \\
  x &\longmapsto \sqrt{x} 
  \,,
\end{align*}
which satisfy $\pi \circ \sigma = \id_{[0,\infty)}$ as maps of sets. We have to show that $\sigma$ is smooth. Let $p: U \mapsto [0, \infty)$ be a plot. Since $\sigma$ is smooth on the interior $(0, \infty)$, $\sigma \circ p$ is smooth at a all points in $U$ that are mapped to the interior $(0, \infty)$. Assume that $p(u_0) = 0$. Since all derivatives of a plot $p$ vanish at $u_0$, $p$ vanishes to all orders at $u_0$, i.e.~
\begin{equation*}
  \frac{p(u)}{\| u - u_0\|^k} \xrightarrow{u \to u_0}
  0 \,,
\end{equation*}
for all $k \geq 0$. This implies that $\sqrt{p(u)} = (\sigma \circ p)(u)$ vanishes to all orders at $u_0$ as well, so that $\sigma \circ p$ is differentiable at $u_0$. We conclude that $\sigma$ is smooth, so that $[0,\infty)$ is a smooth retract of $\bbR$.

By Proposition~\ref{prop:RetractsElastic}, we conclude that $[0,\infty)$ is elastic and by Proposition~\ref{prop:ProductElastic} that any finite product
\begin{equation*}
  \bbR^n_k := [0,\infty)^k \times \bbR^{n-k}   
\end{equation*}
is elastic. Since the diffeological tangent functor commutes with products, the tangent spaces are given by
\begin{equation}
  T_{(x_1, \ldots, x_n)} \bbR^n_k
  = T_{x_1} [0,\infty) \times \ldots
    T_{x_k} [0,\infty) \times \bbR^{n-k}
  \,.
\end{equation}
Finally, it follows from Proposition~\ref{prop:OpenElastic} that every diffeological space modeled locally on $\bbR^n_k$ is elastic. Such spaces are called \textbf{manifolds with corners} \cite{Joyce:2012}. We conclude:

\begin{Proposition}
\label{prop:MfldCornersElastic}
Manifolds with corners are elastic.
\end{Proposition}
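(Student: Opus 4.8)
The plan is to reduce the statement to the local model $\bbR^n_k := [0,\infty)^k \times \bbR^{n-k}$ and then assemble the conclusion from the stability results already at hand. By definition a manifold with corners $X$ carries an atlas, that is, an open cover $\{S_i \to X\}$ in which every chart $S_i$ is isomorphic to an open subset of some $\bbR^n_{k_i}$. So I would organize the argument in three steps: (a) each model $\bbR^n_k$ is elastic; (b) each chart $S_i$, being open in an elastic space, is elastic; (c) $X$ is elastic by the open-cover criterion.

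For step (a) I would run exactly the retract argument from the discussion preceding the statement. The euclidean line $\bbR$ is elastic, since its structure morphisms reduce, by Proposition~\ref{prop:LanRestrict}, to the euclidean ones, which are isomorphisms and inductions. The maps $\pi\colon x \mapsto x^2$ and $\sigma\colon x \mapsto \sqrt{x}$ exhibit $[0,\infty)$ as a smooth retract of $\bbR$, the one nonformal input being that every plot into $[0,\infty)$ vanishes to all orders at its zero set, so that $\sqrt{p}$ is again smooth. Proposition~\ref{prop:RetractsElastic} then gives that $[0,\infty)$ is elastic, and Proposition~\ref{prop:ProductElastic} upgrades this to the finite product $\bbR^n_k = [0,\infty)^k \times \bbR^{n-k}$.

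For steps (b) and (c), each $S_i$ is an open subset of the elastic space $\bbR^n_{k_i}$, so I would invoke the closure of $\Elst$ under restriction to open subsets to conclude that $S_i$ is elastic, and then feed the cover $\{S_i \to X\}$ into Proposition~\ref{prop:OpenElastic} to obtain that $X$ itself is elastic.

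The analytic heart of the matter -- flatness of boundary plots, hence smoothness of the square root -- is already dispatched before the statement, so the only genuine obstacle left is the locality of the axioms that underwrites both Proposition~\ref{prop:OpenElastic} and the closure under open subsets. Here I would check that every functor entering the axioms (the $T_k$ and their iterates, together with $\lambda$ and the $\nu_k$) is a local bundle in the sense of Definition~\ref{def:LocalBundle}; Proposition~\ref{prop:LocFuncCover} then guarantees that being an isomorphism, induction, or injection can be tested on an open cover and is inherited by open subsets, which is precisely what legitimizes both the gluing in (c) and the restriction in (b).
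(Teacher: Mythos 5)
Your proposal is correct and follows essentially the same route as the paper: the retract $\pi(x)=x^2$, $\sigma(x)=\sqrt{x}$ (with flatness of boundary plots giving smoothness of $\sigma \circ p$) together with Proposition~\ref{prop:RetractsElastic} and Proposition~\ref{prop:ProductElastic} to handle the models $\bbR^n_k$, and then Proposition~\ref{prop:OpenElastic} to glue over a chart cover. Your explicit step (b) and the closing remarks on local bundles merely make visible the restriction-to-open-subsets and locality facts that the paper invokes implicitly when passing from the models to arbitrary manifolds with corners.
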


\subsection{Manifolds with cusps}

Consider the following subset of $\bbR^2$,
\begin{equation*}
  X := \bigl\{(x,y)~|~ x \geq 0 ~\wedge~ |y| \leq x \bigr\}
  \,,
\end{equation*}
with the subspace diffeology. We can squeeze or stretch the corner at $(0,0)$ by multiplying the $y$ coordinate by a smooth function $f \in C^\infty\bigl( [0,\infty) \bigr)$ (see Figure~\ref{fig:Squeezing}),
\begin{equation*}
\begin{aligned}
  \phi: X &\longrightarrow X_f \\
  (x,y) &\longmapsto \bigl(x, f(x)\,y\bigr)  \,,
\end{aligned}    
\end{equation*}
where
\begin{equation*}
  X_f := \bigl\{(x,y)~|~ x \geq 0 ~\wedge~ |y| \leq f(x) \bigr\}
  \,.
\end{equation*}

\begin{figure}
\centering

\begin{tikzpicture}

\node (A) at (0,0)
{
    \begin{tikzpicture}[bolli,domain=-1:1]
        \filldraw 
            plot[parametric] ({abs(\x)}, {\x});
        \draw (0,0) node [point] {};
    \end{tikzpicture}
};

\node (B) [right=of A]
{
    \begin{tikzpicture}[bolli]
        \path [->] (0,0) edge node [above] {$\cong$} (1,0);
    \end{tikzpicture}
};

\node (C) [right=of B]
{
    \begin{tikzpicture}[bolli,domain=-1:1]
        \filldraw 
            plot[parametric] ({(\x)^2}, {(\x)^5});
        \draw (0,0) node [point] {};
    \end{tikzpicture}
};

\end{tikzpicture}
\caption{Squeezing a corner by multiplying the $y$-coordinate with the function $f(x) = x^\frac{3}{2}$. The boundary of the resulting diffeological subspace of $\bbR^2$ is the curve $x^2 = y^5$ with a cusp at $(0,0)$.}
\label{fig:Squeezing}
\end{figure}
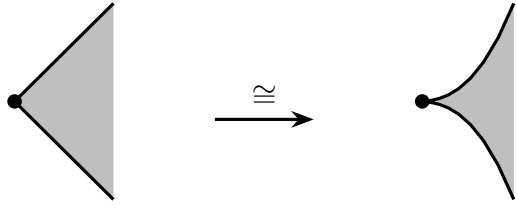

Assume that $f(x) > 0$ for $x > 0$. Then $\phi$ has an inverse map given by $\phi^{-1}(0,0) = (0,0)$ and 
\begin{equation*}
  \phi^{-1}(x,y) = 
  \Bigl(x, \frac{y}{f(x)} \Bigr)
\end{equation*}
otherwise. Let us equip $X_f$ with the pullback diffeology of $\phi^{-1}$. Since $\phi^{-1}$ is surjective, $\phi$ is an isomorphism of diffeological spaces. $X$ is $[0,\infty) \times [0,\infty)$ rotated by minus 45 degrees, so it is elastic by Proposition~\ref{prop:MfldCornersElastic}. Since $\phi$ is an isomorphism $X_f$ is elastic, too.

We cannot describe here in general, what a manifold with cusps or similar defects is, since any squeezing operation that is a smooth retract will produce a new model for elastic subspaces of $\bbR^n$. In Figure~\ref{fig:CornersCusps} on page~\pageref{fig:CornersCusps} we have given a few examples of elastic subspaces of $\bbR^2$ that can be obtained in this way.

\subsection{Diffeological groups}

A diffeological group is a group object $(G,m,e)$ in $\Dflg$. As for ordinary groups, the inverse $i: G \to G$, $g \mapsto g^{-1}$ is unique, so it is a property rather than a structure. By applying an endofunctor $F: \Dflg \to \Dflg$ that preserves products, we obtain a diffeological group $(FG, Fm, Fe)$. Examples for $F$ are $T$ and $T_k$ (see Proposition~\ref{prop:FpreservesProducts}). $(TG, Tm, Te)$ is called the tangent group.

From the multiplication of the tangent group we obtain the left $G$-translation of $TG$ given by
\begin{equation*}
  L: G \times TG 
  \xrightarrow{~0_G \times \id_{TG}~} 
  TG \times TG \xrightarrow{~Tm~} TG
  \,.
\end{equation*}
If we restrict the second argument to the tangent fiber at $e$, we obtain the map
\begin{equation}
\label{eq:Group01}
\begin{aligned}
  \phi_G:
  G \times T_e G 
  &\longrightarrow TG
  \\
  (g, v_e) &\longmapsto 
  L_g v_e
  \,,
\end{aligned}    
\end{equation}
where $T_e G = \{e\} \times_G TG$ is the tangent fiber at the group identity. It has an inverse given by
\begin{equation*}
  \phi_G^{-1}:
  TG \xrightarrow{~(i \circ \pi_G, \id_{TG})}
  G \times TG 
  \xrightarrow{~L~} T_e G
  \,,
\end{equation*}
which maps $v_g \mapsto L_{g^{-1}} v_g$. We conclude that~\eqref{eq:Group01} is an isomorphism. In analogy to Lie groups, we will denote
\begin{equation*}
  \frakg := T_e G \,,    
\end{equation*}
so that we obtain the trivialization of the tangent bundle,
\begin{equation*}
  TG \cong G \times \frakg
  \,.
\end{equation*}
$TG$ is itself a diffeological group with neutral element $0_e$. Using the isomorphism
\begin{equation*}
\begin{split}
  T_{(x,y)} (X,Y) 
  &\cong \{(x,y)\} \times_{X\times Y} T(X \times Y)
  \\
  &\cong (\{x\} \times_X TX) \times (\{y\} \times_Y TY)
  \\
  &\cong T_x X \times T_y Y
  \,,
\end{split}    
\end{equation*}
we obtain the trivialization
\begin{equation*}
\begin{split}
  T^2 G 
  &\cong
  TG \times T_{(0_e)} TG
  \\
  &\cong G \times \frakg \times T_{(e,0)}(G \times \frakg)
  \\
  &\cong G \times \frakg \times T_e G \times T_0 \frakg
  \\
  &\cong G \times \frakg \times \frakg \times T_0 \frakg
  \,.
\end{split}
\end{equation*}
In this trivialization the vertical lift $\lambda_G: TG \to T^2 G$ is given by
\begin{equation*}
\begin{aligned}
  G \times \frakg &\longrightarrow
  G \times \frakg \times \frakg \times T_0 \frakg
  \\
  (g,a) &\longmapsto
  \bigl(g,0,0,\lambda^\perp_{\frakg}(a) \bigr)
\end{aligned}
\end{equation*}
where
\begin{equation}
\label{eq:VecVertLift}
  \lambda^\perp_\frakg: \frakg \longrightarrow T_0 \frakg
  \,,
\end{equation}
maps $a \in \frakg$ to the tangent vector represented by the path $t \mapsto ta$.

\begin{Theorem}
\label{thm:ElasticGroups} 
A diffeological group $G$ is elastic if and only if $\lambda^\perp_\frakg: \frakg \to T_0 \frakg$ is an induction.
\end{Theorem}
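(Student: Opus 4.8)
The plan is to exploit the homogeneity of a diffeological group: left translations $L_g$ act by isomorphisms on $G$ and, by naturality, on all the functorial constructions $TG$, $T_k G$, $T^2 G$, $(\Lanyy\hat{T}_k) G$ as well as on the natural morphisms $\theta_{k,G}$, $\tau_G$, $\lambda_G$, $\nu_{k,G}$. Since these translations act freely and transitively on the base $G$, every one of these bundles is trivialized by translation, exactly as in the displayed isomorphisms $TG \cong G \times \frakg$ and $T^2 G \cong G \times \frakg \times \frakg \times T_0\frakg$. A translation-equivariant morphism of such bundles over $G$ is therefore $\id_G$ times its fibre map in the trivialization, so it is an induction, subduction, epimorphism, or isomorphism if and only if its restriction to the fibre over the identity $e$ is (using that inductions and subductions are stable under products and pullbacks, Proposition~\ref{prop:ConcSheafProperties}). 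Thus each of the Axioms (E1)--(E4) for $G$ reduces to a statement about the Lie algebra $\frakg$ and its iterated tangent spaces, i.e.\ about a diffeological vector space.

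For the easy ``only if'' direction I would observe that if $G$ is elastic then Axiom~(E3) holds, i.e.\ $\lambda_G \colon TG \to T^2 G$ is an induction. In the trivialization it is given by $(g,a) \mapsto (g,0,0,\lambda^\perp_\frakg(a))$, a morphism of bundles over $G$ covering $\id_G$. Pulling back along $e \colon * \to G$, the restriction to the fibre is $a \mapsto (0,0,\lambda^\perp_\frakg(a))$, which is an induction by stability under pullback; since its first two components are constant, this is equivalent to $\lambda^\perp_\frakg \colon \frakg \to T_0\frakg$ being an induction.

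For the ``if'' direction the core observation is that Axioms (E1), (E2), and (E4) hold for \emph{every} diffeological group, with no extra hypothesis. The engine is the multiplication of plots: given tangent vectors $v^1,\dots,v^k \in \frakg$ represented by paths $\gamma_i$ with $\gamma_i(0)=e$, the plot $p(t_1,\dots,t_k) = \gamma_1(t_1)\cdots\gamma_k(t_k)$ satisfies $T_0 p\,(\partial/\partial t^i) = v^i$, so any finite family of tangent directions at $e$---and, by translation, at any point---is realized as the coordinate directions of a single plot. This gives surjectivity of $\theta_{k,G}$, and a similar analysis of the colimit defining the Kan extension shows that $\theta_{k,G}$ is injective with smooth inverse, hence an isomorphism, which is Axiom~(E1). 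Since $\hat{T}_k$ preserves finite products, so does $\Lanyy\hat{T}_k$ (Proposition~\ref{prop:FpreservesProducts}), both $(\Lanyy\hat{T}_k)G$ and $T_k G$ are diffeological groups, and $\theta_{k,G}$ is a homomorphism over $G$; this is what lets the fibrewise computation propagate. Axiom~(E2) follows because the exchange $\hat{\tau}$ trivializes to the swap of the two copies of $\frakg$ together with the identity on $T_0\frakg$, which manifestly descends, and Axiom~(E4) follows because both $T T_k G$ and the iterated fibre product are computed explicitly by the trivialization, making $\nu_{k,G}$ visibly injective. Finally Axiom~(E3) reduces, exactly as in the ``only if'' direction, to $\lambda^\perp_\frakg$ being an induction.

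It remains to handle Axiom~(E5). Here I would use that $T_{k_1}\cdots T_{k_n} G$ is again a diffeological group---being the image of the group object $G$ under the finite-product-preserving functor $T_{k_1}\cdots T_{k_n}$---so the equivalence just proved applies to it, and the task becomes showing that the induction hypothesis on $\lambda^\perp_\frakg$ propagates to the Lie algebra of each iterated tangent group, which is an iterated tangent space of $\frakg$. This propagation---establishing that the property ``$\lambda^\perp_A$ is an induction'' passes from a diffeological vector space $A$ to $T_0 A$ and to finite products---is, together with the diffeological (as opposed to merely set-theoretic) control of the colimit defining $\theta_{k,G}$ in Axiom~(E1), the main obstacle. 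The difficulty is genuine: diffeological groups such as $\bbR/\bbQ$ admit no useful open cover by Euclidean pieces, so the local-bundle machinery of Proposition~\ref{prop:LocFuncCover} is unavailable, and the multiplication of plots must be made to control the diffeology directly.
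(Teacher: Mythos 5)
The first thing to say is that this paper does not actually contain a proof of Theorem~\ref{thm:ElasticGroups}: it explicitly defers the proof to the companion paper \cite{Blohmann:Elastic}, describing it as long and technical, so your proposal can only be compared against the setup the paper provides before the statement (the trivializations $TG \cong G \times \frakg$ and $T^2 G \cong G \times \frakg \times \frakg \times T_0\frakg$ obtained from left translation, and the resulting coordinate form of $\lambda_G$). Your plan does follow that setup, and your ``only if'' direction is essentially complete: $\frakg = \{e\} \times_G TG \to TG$ is an induction because inductions are stable under pullback, hence the composite $\frakg \to TG \xrightarrow{\lambda_G} T^2 G$ is an induction, and since it factors as $\lambda^\perp_\frakg$ followed by the induction $T_0\frakg \cong \{(e,0,0)\} \times T_0\frakg \hookrightarrow T^2 G$, cancellation for inductions gives that $\lambda^\perp_\frakg$ is one.

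The ``if'' direction, however, has genuine gaps, only some of which you acknowledge. First, for Axiom~(E1) the multiplication of paths $p(t_1,\dots,t_k) = \gamma_1(t_1)\cdots\gamma_k(t_k)$ gives only pointwise surjectivity of $\theta_{k,G}$; injectivity (producing connecting morphisms in the category of plots between any two representatives that become equal in $T_k G$) and the diffeological half of the claim (that the resulting bijection is an isomorphism, e.g.\ injective plus a subduction, which requires locally lifting plots of $T_k G$ through $\theta_{k,G}$) are precisely the long technical core of the theorem, and ``a similar analysis of the colimit shows'' is not an argument. Second, your justification of Axiom~(E2) rests on a false computation: in the group trivialization the canonical flip is \emph{not} the naive swap of the two copies of $\frakg$. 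Already for a finite-dimensional Lie group the Maurer--Cartan equation forces a bracket correction in the last component, $\tau_G(g,a,b,c) = (g,b,a,c+[a,b])$ up to sign conventions. The conclusion that $(\Lanyy\hat{\tau})_G$ descends through $\theta^2_G$ is correct, but it is not ``manifest'', and for a general diffeological group no bracket is even available at this stage of the argument, so descent along the subduction $\theta^2_G$ has to be established by analyzing its fibres directly. Third, for Axiom~(E5) you correctly reduce to propagating the hypothesis to the Lie algebras of the iterated tangent groups, e.g.\ showing that $\lambda^\perp_{T_0\frakg}$ is an induction whenever $\lambda^\perp_\frakg$ is, but you offer no mechanism for this: it amounts to a preservation-of-inductions statement for the colimit-defined functor $T_0$, exactly the kind of statement that fails for general diffeological spaces, and you yourself flag it as the main obstacle. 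In sum, your proposal is a plausible roadmap consistent with the theorem and with the paper's setup, but two of its essential steps are left open and a third rests on an incorrect formula, so it does not constitute a proof.
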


It follows from Theorem~\ref{thm:ElasticGroups} and Theorem~\ref{thm:ElasticTanCat} that $\frakg$ and $T_0 \frakg$ are diffeological vector spaces. For an elastic group $\lambda^\perp_\frakg$ is an isomorphism and $\frakg$ is equipped with the Lie bracket of invariant vector fields.

\begin{Example}
Every Lie group $G$ is elastic when $G$ is a smooth manifold. However, when equipp $G$ with the $C^k$-diffeology, then $\frakg = \bbR^n_{C^k}$ and $T_0 \frakg = \bbR^n_{C^{k-1}}$, so that $\lambda^\perp_\frakg$ is no longer an induction (cf.~Example~\ref{ex:CkDiffeologies}).
\end{Example}

\begin{Example}
The diffeomorphism group of a smooth manifold is elastic.
\end{Example}

\subsection{Diffeological vector spaces}

A diffeological vector space is an $\bbR$-vector space object in $\Dflg$. Explicitly, this means that the addition and scalar multiplication are morphisms of diffeological spaces. Diffeological vector spaces are a rich and subtle structure \cite{ChristensenWu:2019}.

\begin{Proposition}
\label{prop:ElasticVect}
A diffeological vector space $A$ is elastic if and only if the natural linear map $T_0 A \to T_0 T_0 A$ is an induction.
\end{Proposition}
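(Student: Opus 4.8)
The plan is to obtain this as an immediate corollary of Theorem~\ref{thm:ElasticGroups} by passing to the underlying additive group. The essential observation is that elasticity is a property of the underlying diffeological space alone: Axioms~(E1)--(E5) are phrased entirely in terms of the Kan-extended tangent functor $T$ and its associated natural transformations, and make no reference to the algebraic structure carried by $A$. Consequently, $A$ is elastic as a diffeological vector space precisely when its underlying diffeological space is elastic.

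First I would forget the scalar multiplication and retain only the addition and zero, regarding $A$ as a diffeological abelian group $(A,+,0)$; this is legitimate since every diffeological vector space is in particular a group object in $\Dflg$. Theorem~\ref{thm:ElasticGroups} then applies verbatim and states that $A$ is elastic if and only if the vertical lift $\lambda^\perp_\frakg: \frakg \to T_0\frakg$ of~\eqref{eq:VecVertLift} is an induction. Because the neutral element of the additive group is the origin $0 \in A$, the Lie algebra is $\frakg = T_e A = T_0 A$, so that $T_0\frakg = T_0 T_0 A$ and $\lambda^\perp_\frakg$ is a map $T_0 A \to T_0 T_0 A$.

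It then remains only to match this map with the one in the statement. By definition~\eqref{eq:VecVertLift}, $\lambda^\perp_\frakg$ sends $a \in \frakg = T_0 A$ to the tangent vector at $0$ represented by the path $t \mapsto ta$, where the scaling takes place in the tangent fibre $T_0 A$. This is verbatim the natural map $v_a \mapsto [\,t \mapsto t v_a\,]$ described in the proposition, and its $\bbR$-linearity is inherited from the fibrewise scalar multiplication on $TA$, which is all that is needed to justify the word ``linear'' in the statement. Since the two maps coincide on the nose, the induction criterion of Theorem~\ref{thm:ElasticGroups} translates directly into the asserted equivalence.

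I do not expect a genuine obstacle here: the proposition is essentially a restatement of Theorem~\ref{thm:ElasticGroups} through the forgetful functor from diffeological vector spaces to diffeological groups. The only point requiring care is the bookkeeping of base points, namely that the group identity of $(A,+,0)$ is $0 \in A$, so that the Lie algebra $\frakg = T_e A$ really is $T_0 A$ and its tangent space $T_0\frakg$ really is $T_0 T_0 A$; once this identification is in place the vertical lift $\lambda^\perp_\frakg$ and the natural linear map of the proposition are literally the same morphism, and no further computation is needed.
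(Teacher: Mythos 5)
Your proof is correct and takes exactly the same route as the paper, whose entire proof reads: ``This is Theorem~\ref{thm:ElasticGroups} for the additive diffeological group $A$.'' Your extra bookkeeping --- noting that elasticity depends only on the underlying diffeological space, that the neutral element of $(A,+)$ is $0$ so $\frakg = T_e A = T_0 A$, and that $\lambda^\perp_\frakg$ of~\eqref{eq:VecVertLift} is literally the natural map $v_a \mapsto [\,t \mapsto t v_a\,]$ of the statement --- simply makes explicit what the paper leaves implicit.
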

\begin{proof}
This is Theorem~\ref{thm:ElasticGroups} for the additive diffeological group $A$.
\end{proof}

In many cases, elastic diffeological vector spaces satisfy the stronger condition $A \cong T_0 A$, which is equivalent to 
\begin{equation*}
  TA \cong A \times A
  \,.
\end{equation*}
We will call diffeological vector spaces with this property \textbf{tangent stable}. Tangent stable vector spaces are elastic.

\begin{Proposition}
\label{prop:LiftA}
All fine diffeological vector spaces are tangent stable (hence elastic).
\end{Proposition}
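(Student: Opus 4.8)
The plan is to show that for a fine diffeological vector space $A$ the natural map $\phi \colon A \to T_0 A$, $a \mapsto [t \mapsto ta]$, is an isomorphism of diffeological spaces; tangent stability is then immediate, and elasticity follows from the implication ``tangent stable vector spaces are elastic'' already recorded above. First I would invoke the standard structural description of the fine diffeology: writing $\calF$ for the directed poset of finite-dimensional linear subspaces $F \subseteq A$ ordered by inclusion, the fine diffeology is exactly the colimit diffeology $A \cong \Colim_{F \in \calF} F$, each $F \cong \bbR^{\dim F}$ carrying its euclidean diffeology (cf.~\cite{Iglesias-Zemmour:Diffeology}). One consequence I would use repeatedly is that every linear functional $\ell \colon A \to \bbR$ is smooth, since any plot of $A$ locally has the form $u \mapsto \sum_i f_i(u)\, a_i$ with $f_i$ smooth, whence $\ell \circ p = \sum_i f_i\, \ell(a_i)$ is smooth.

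Second, I would compute $TA$. Since the tangent functor $\hat{T}$ is a cosheaf, Proposition~\ref{prop:ColimSubduction} applied to the inclusion diagram $F \mapsto F$ shows that the canonical morphism $\Colim_{F} TF \to T(\Colim_F F) = TA$ is a subduction. Each $TF \cong F \times F$ by the euclidean tangent structure of Section~\ref{sec:TangentEuclidean}, and the transition maps $\hat{T}\iota_{F,F'} = \iota_{F,F'} \times \iota_{F,F'}$ are injective, so on underlying sets (the functor of points preserves colimits by Proposition~\ref{prop:ConcSheafProperties}) the domain is the filtered union $\bigcup_F F \times F = A \times A$. I would then verify that this subduction is injective, hence an isomorphism: two representatives agree under $\pi_A$ on base points, and under $T\ell \colon TA \to T\bbR \cong \bbR \times \bbR$ for every smooth linear $\ell$ on the fibre coordinate; since smooth linear functionals separate the points of $A$, equality in $TA$ forces equality of the pairs. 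This gives $TA \cong \Colim_F (F \times F)$, and restricting to the fibre over $0$ yields a bijection $\phi \colon A \to T_0 A$ on underlying sets.

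Third, I would match the diffeologies. Smoothness of $\phi$ is formal: it is obtained by differentiating the scalar multiplication $\mu \colon \bbR \times A \to A$ and evaluating on the canonical tangent vector $\partial_t|_0 \in T_0\bbR$ against the zero section $0_A$, using that $T$ preserves the product (Proposition~\ref{prop:FpreservesProducts}). It then remains to see that $\phi$ is a subduction, for a bijective subduction is an isomorphism. Given a plot $q \colon V \to T_0 A \hookrightarrow TA \cong \Colim_F TF$, the idea is that $q$ factors locally through one coprojection $TF = F \times F \to TA$; since $q$ lands in the fibre over $0$, injectivity of $\iota_F$ forces the first component of the local lift to vanish, so $q$ factors locally as $\phi \circ (\iota_F \circ \tilde{q}_2)$ with $\tilde{q}_2 \colon V_0 \to F$ smooth.

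The main obstacle is precisely this local-factorization property: that a plot of the filtered colimit $TA \cong \Colim_{F} TF$ factors locally through a finite stage $TF$. This is the tangent-level incarnation of the defining feature of the fine diffeology (a plot of an increasing union of euclidean subspaces factors locally through one of them), and it is what upgrades $\phi$ from a smooth bijection to a subduction, hence to an isomorphism. Everything else—the set-level bijection and the smoothness of $\phi$—is formal once one has the cosheaf property of $\hat{T}$ and the separation of points by smooth linear functionals.
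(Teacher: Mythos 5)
You should know at the outset that the paper itself states Proposition~\ref{prop:LiftA} without any proof -- it is among the results whose proofs are deferred to \cite{Blohmann:Elastic} -- so there is no in-paper argument to compare yours against; I can only judge your proposal on its own merits, and I find it correct. Your route also sits naturally inside the paper's own machinery: the colimit presentation $A \cong \Colim_{F} yF$ of the fine diffeology over finite-dimensional subspaces, the cosheaf property of $\hat{T}$ together with Proposition~\ref{prop:ColimSubduction} to obtain the subduction $\Colim_F T(yF) \to TA$, Proposition~\ref{prop:LanRestrict} to identify $T(yF) \cong y(F \times F)$, preservation of colimits by the functor of points (Proposition~\ref{prop:ConcSheafProperties}) for the set-level identification with $A \times A$, and separation of points by smooth linear functionals to make the subduction injective, hence an isomorphism (an injective subduction is a mono and a strong epi, so an iso). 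Two places deserve to be written out rather than asserted. First, the ``local factorization'' property you flag as the main obstacle: in $\Dflg$ a colimit is a quotient of a coproduct, and both the coproduct and the quotient (final) diffeology have the property that plots lift locally; this one-line observation is exactly why a plot of $\Colim_F T(yF)$ factors locally through a single stage $TF$, and since your subduction argument for $\phi = \lambda^\perp_A$ hinges on it, it should appear as a lemma with this proof instead of being justified by analogy with the fine diffeology. Second, smoothness of $\phi$: your argument via $T\mu$ does work (it needs Proposition~\ref{prop:FpreservesProducts} plus the small computation identifying the image of the pair $(\partial_t|_0, 0_a)$ under $T\mu$ with the class of $(0,a) \in TF$), but it is shorter to note that a plot $p$ of fine $A$ locally factors as $\iota_F \circ r$ with $r$ smooth into $F$, so that $\phi \circ p$ locally equals the cocone map $TF \to TA$ composed with the smooth map $(0,r)$. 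With these details supplied, your proof is complete: $\phi$ is a smooth bijection and a subduction, hence an isomorphism, so $A$ is tangent stable, and elasticity follows from the implication ``tangent stable implies elastic'' recorded in the paper just before the proposition.
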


\subsection{Diffeological manifolds}

We recall that a \textbf{diffeological manifold} modelled on the diffeological vector space $A$ is a diffeological space $X$ such that every point of $X$ has an open neighborhood that is isomorphic as diffeological space to an open subset of $A$. 

\begin{Proposition}
A diffeological manifold is elastic if and only if it is modelled on an elastic diffeological vector space.
\end{Proposition}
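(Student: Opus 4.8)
The plan is to derive both implications from the local characterisation of elasticity (Proposition~\ref{prop:OpenElastic}) together with the homogeneity of a diffeological vector space under translations; this repackages Corollary~\ref{cor:MfldElastic}, to which one could also simply appeal directly.

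First I would record the auxiliary fact that an open subspace $S$ of an elastic space $X$ is again elastic. The tangent functor $\hat{T} \to 1$ on euclidean spaces is a local bundle, so by Proposition~\ref{prop:KanLocal} the Kan extension $T \to \Id$ is local, and by Proposition~\ref{prop:LocBundProd} so are all iterates $T_k$, $T^2$, and their compositions. Consequently each structure morphism appearing in Definition~\ref{def:elastic}---namely $\theta_{k,X}$, $\tau_X$, $\lambda_X$, and $\nu_{k,X}$---restricts along the open induction $S \hookrightarrow X$ to its pullback over $S$. Since isomorphisms, inductions and monomorphisms are stable under pullback (Proposition~\ref{prop:ConcSheafProperties}), each of (E1)--(E4) descends from $X$ to $S$; for (E2) one additionally uses that $\theta^2$ is a subduction (Proposition~\ref{prop:KanProdSubduction}), so that the restricted $\tau_S$ is the well-defined descent. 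Finally (E5) for $S$ holds because every $T_{k_1}\cdots T_{k_n}S$ is, by locality, an open subspace of the elastic space $T_{k_1}\cdots T_{k_n}X$, to which the same restriction argument applies. This is precisely the content powering Proposition~\ref{prop:LocFuncCover} and Proposition~\ref{prop:OpenElastic}.

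For sufficiency I would assume $A$ elastic. By definition every point of the manifold $X$ has an open neighbourhood isomorphic to an open subspace $S \subseteq A$; by the previous paragraph each such $S$ is elastic, and elasticity is invariant under isomorphism of diffeological spaces because the axioms only assert that certain natural morphisms be isomorphisms, inductions, or monomorphisms. These neighbourhoods therefore form an open cover of $X$ by elastic spaces, and Proposition~\ref{prop:OpenElastic} yields that $X$ is elastic. For necessity I would assume $X$ elastic, fix a chart giving an isomorphism from an open $U \subseteq X$ onto a nonempty open subspace $S \subseteq A$, and note that $U$, being open in the elastic space $X$, is elastic, hence so is $S$. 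Because $A$ is a diffeological vector space, each translation $x \mapsto x + a$ is a diffeomorphism restricting to an isomorphism $S \cong a + S$, so every translate $a + S$ is elastic; as $S$ is nonempty and open these translates cover $A$, and Proposition~\ref{prop:OpenElastic} gives that $A$ is elastic. Alternatively one may translate the chart to contain $0 \in A$ and verify the single induction condition on $\lambda^\perp_{\frakg}$ of Proposition~\ref{prop:ElasticVect}.

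The step I expect to be the main obstacle is the necessity direction: a single chart only exhibits an open subset of $A$, never $A$ itself, so the passage from ``some nonempty open subset of $A$ is elastic'' to ``$A$ is elastic'' genuinely requires the translation-homogeneity of the vector space. The only other point requiring care is justifying that open subspaces of elastic spaces are elastic for the full list (E1)--(E5), which rests on the locality of the tangent functor and all its iterates supplied by Propositions~\ref{prop:KanLocal} and~\ref{prop:LocBundProd} and on the pullback-stability of the relevant classes of morphisms.
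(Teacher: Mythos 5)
Your proposal is correct and follows essentially the same route as the paper, whose entire proof is an appeal to Proposition~\ref{prop:OpenElastic} (and whose Corollary~\ref{cor:MfldElastic} is literally the same statement). The extra work you do---descending the axioms (E1)--(E5) to open subspaces via locality of $T$ and its iterates, and covering $A$ by translates of one elastic chart---is exactly the content the paper leaves implicit in (or defers to the companion paper's proof of) Proposition~\ref{prop:OpenElastic}.
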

\begin{proof}
This follows from Proposition~\ref{prop:OpenElastic}.
\end{proof}

\subsection{Mapping spaces}

If $A$ is diffeological vector space, then the mapping space $\intHom(X,A)$ is a diffeological vector space with pointwise addition and scalar multiplication.

\begin{Proposition}
\label{prop:THomXAstable}
Let $X$ be a diffeological space and $A$ a diffeological vector space. If $A$ is tangent stable, then so is $\intHom(X,A)$.
\end{Proposition}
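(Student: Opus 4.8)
The hypothesis supplies that $\intHom(X,A)$ is a diffeological vector space, and "tangent stable" means precisely that the natural map $\lambda^\perp_{\intHom(X,A)}\colon \intHom(X,A)\to T_0\intHom(X,A)$, sending $f$ to the tangent vector at $0$ represented by the path $t\mapsto tf$, is an isomorphism. The first step is to observe that, as for any diffeological group, $T$ preserves finite products (Proposition~\ref{prop:FpreservesProducts}), so the left--translation trivialization of the diffeological-groups subsection applies to the additive group of $\intHom(X,A)$ and gives $T\intHom(X,A)\cong \intHom(X,A)\times T_0\intHom(X,A)$ with the first factor the base. Hence the whole statement reduces to showing that $\lambda^\perp_{\intHom(X,A)}$ is an isomorphism onto the fibre; no computation of $T$ as a colimit is needed for the base direction.

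\textbf{Construction of a candidate inverse.} Next I would build the comparison map $\Theta\colon T\intHom(X,A)\to\intHom(X,TA)$ from the evaluation $\mathrm{ev}\colon \intHom(X,A)\times X\to A$: applying $T$ and using that it preserves products gives $T\intHom(X,A)\times TX\to TA$, precomposition with $\mathrm{id}\times 0_X$ yields $T\intHom(X,A)\times X\to TA$, and the exponential law (Example~\ref{ex:Diffeologies}(e)) produces $\Theta$. Since the base point of $\xi\in T_0\intHom(X,A)$ is $0$, the value $\Theta(\xi)(x)$ is based at $\mathrm{ev}(0,x)=0$, so $\Theta$ restricts to $\Theta_0\colon T_0\intHom(X,A)\to\intHom(X,T_0A)$. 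Because $A$ is tangent stable, $\lambda^\perp_A\colon A\xrightarrow{\ \cong\ }T_0A$, and as a right adjoint $\intHom(X,-)$ carries this to an isomorphism $\intHom(X,T_0A)\cong\intHom(X,A)$. A pointwise check, using that $\Theta_0$ extracts the first-order derivative in each argument $x$ and that $\lambda^\perp_A$ identifies this derivative with an element of $A$, shows $\Theta_0\circ\lambda^\perp_{\intHom(X,A)}=\mathrm{id}$. This already proves $\lambda^\perp_{\intHom(X,A)}$ is a split monomorphism.

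\textbf{The main obstacle: surjectivity.} What remains, and is the crux, is that $\lambda^\perp_{\intHom(X,A)}$ is surjective, equivalently that $\Theta_0$ is injective. Here one represents a class in $T_0\intHom(X,A)$ by a plot $\tilde p\colon U\times X\to A$ with $\tilde p(u_0,-)=0$ together with a coordinate direction $e$; its image under $\Theta_0$ is the pointwise derivative $h(x)=\partial_e\tilde p(u_0,x)$, an element of $\intHom(X,A)$ via $T_0A\cong A$. One must show the original vector equals $\lambda^\perp_{\intHom(X,A)}(h)$, i.e.\ that it is already the derivative at $0$ of the linear path $t\mapsto th$ and so depends only on $h$. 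This is exactly the point where tangent stability of $A$ is indispensable: the identification $T_0A\cong A$ guarantees that the fibrewise linearization is by honest elements of $A$ carrying the full diffeology, which—combined with the functional diffeology on $\intHom(X,A)$—lets one identify the two representatives inside the colimit computing $T$. I expect this step to be the hard one, since it must reconcile the colimit defining $T$ with the limit (right adjoint) defining $\intHom(X,-)$, the very incompatibility that tangent stability is designed to bridge. Once it is established, $\Theta_0$ is a two-sided inverse of $\lambda^\perp_{\intHom(X,A)}$, which therefore is an isomorphism; combined with the trivialization of the first paragraph this yields $T\intHom(X,A)\cong\intHom(X,A)\times\intHom(X,A)$, so $\intHom(X,A)$ is tangent stable.
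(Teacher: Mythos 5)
Your first two steps are correct and fit the paper's framework: the trivialization $T\intHom(X,A)\cong\intHom(X,A)\times T_0\intHom(X,A)$ is available for the additive group of $\intHom(X,A)$ because $T=\Lanyy\hat{T}$ preserves finite products (Proposition~\ref{prop:FpreservesProducts}), and your comparison map $\Theta$, its restriction $\Theta_0$, and the identity $\Theta_0\circ\lambda^\perp_{\intHom(X,A)}=\id$ are all sound, so $\lambda^\perp_{\intHom(X,A)}$ is a split (hence strong) monomorphism. Note that the paper itself states Proposition~\ref{prop:THomXAstable} without proof, deferring it to \cite{Blohmann:Elastic}, so there is no in-paper argument to compare against; your proposal has to stand on its own.

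It does not, because the third paragraph is a statement of intent rather than a proof, and what it omits is precisely the entire mathematical content. To get surjectivity of $\lambda^\perp_{\intHom(X,A)}$ you must show that an arbitrary representative $(q,(u_0,e))$, with $q\colon yU\to\intHom(X,A)$ a plot, $q(u_0)=0$, is identified \emph{inside the colimit} $\Colim_{yU\to\intHom(X,A)}y\hat{T}U$ with the linear representative $(t\mapsto th,\,(0,1))$, where $h=\Theta_0[q,(u_0,e)]$. Identifications in this colimit are generated by morphisms of plots, i.e.\ smooth maps between euclidean domains commuting with the plots, so what is required is an explicit zig-zag of finite-dimensional plots; the sentence ``the identification $T_0A\cong A$ \dots lets one identify the two representatives inside the colimit'' names no such zig-zag. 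That this is a genuine obstacle and not bookkeeping is visible already in the model case $A=\bbR$: after reducing to a path $c$ with $c(0)=0$ and subtracting the linear path, one must show that a path whose pointwise derivative vanishes represents the zero tangent vector, and the argument for that factors $\tilde c(t,x)=t\,\tilde g(t,x)$ by the parametrized Hadamard lemma (an integral formula for $\tilde g$) and then pushes the plot through the scalar multiplication $\bbR\times\intHom(X,A)\to\intHom(X,A)$ onto a constant slice, where the tangent vector is manifestly zero. For a general tangent stable $A$, tangent stability does give you that the pointwise derivatives $\partial_t\tilde c\colon I\times X\to A$ are smooth $A$-valued maps (via $TA\cong A\times A$), but it does not by itself give the Hadamard-type division $\tilde c=t\,\tilde g$, which asks for integration of a smooth $A$-valued family---an operation that a diffeological vector space need not support and that your argument never constructs or circumvents. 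Supplying that step (or a substitute avoiding integration) is the actual proof of the proposition, and it is missing.
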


\begin{Corollary}
If $A$ is a tangent stable diffeological vector space then we have a natural isomorphisms
\begin{equation*}
  T\intHom(X,A) \cong
  \intHom(X,A) \times \intHom(X,A) \cong
  \intHom(X,TA)
\end{equation*}
for all $X \in \Dflg$.
\end{Corollary}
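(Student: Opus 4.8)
The plan is to derive both isomorphisms from Proposition~\ref{prop:THomXAstable} together with two structural facts: the canonical trivialization of the tangent bundle of a diffeological vector space coming from its additive group structure, and the preservation of finite products by the internal hom functor $\intHom(X,\Empty)$.

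First I would apply Proposition~\ref{prop:THomXAstable}: since $A$ is tangent stable, so is $B := \intHom(X,A)$, which is a diffeological vector space under pointwise operations. Regarding $B$ as an abelian diffeological group, the trivialization~\eqref{eq:Group01} gives an isomorphism $TB \cong B \times T_0 B$. Tangent stability of $B$ means exactly that the natural lift $\lambda^\perp_B\colon B \to T_0 B$ (the analog of~\eqref{eq:VecVertLift} for $B$) is an isomorphism, so substituting $T_0 B \cong B$ yields the first isomorphism
\[
  T\intHom(X,A)\;\cong\;\intHom(X,A)\times T_0\intHom(X,A)\;\cong\;\intHom(X,A)\times\intHom(X,A).
\]

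For the second isomorphism I would instead use the tangent stability of $A$ itself in the form $TA\cong A\times A$, together with the fact that $\intHom(X,\Empty)$, being right adjoint to $\Empty\times X$, preserves finite products. Applying it to the isomorphism $TA\cong A\times A$ gives
\[
  \intHom(X,TA)\;\cong\;\intHom(X,A\times A)\;\cong\;\intHom(X,A)\times\intHom(X,A),
\]
which is the second claimed isomorphism through the common middle term $\intHom(X,A)\times\intHom(X,A)$.

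Finally I would verify naturality in $X$, which is what the word ``natural'' in the statement refers to. Every building block is natural with respect to the contravariant dependence on $X$: a morphism $X'\to X$ induces a linear morphism of diffeological vector spaces $\intHom(X,A)\to\intHom(X',A)$, and both the trivialization~\eqref{eq:Group01} and the lift~\eqref{eq:VecVertLift} are natural for linear maps, while the product-preservation isomorphism and the bifunctoriality of $\intHom$ are natural by construction. The work here is essentially bookkeeping; the only point requiring genuine care is checking that the two chains meeting at $\intHom(X,A)\times\intHom(X,A)$ assemble into a single natural transformation between the functors $X\mapsto T\intHom(X,A)$ and $X\mapsto\intHom(X,TA)$. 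One clean way to discharge this is to identify the resulting composite with the canonical comparison morphism $T\intHom(X,A)\to\intHom(X,TA)$ that differentiates a smooth path of maps pointwise, and to observe that tangent stability forces this evidently natural morphism to be invertible.
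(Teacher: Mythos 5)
Your proposal is correct and takes essentially the same route as the paper's (implicit) argument: the first isomorphism is exactly the content of Proposition~\ref{prop:THomXAstable} combined with the definition of tangent stability (via the group trivialization $TB \cong B \times T_0 B$), and the second follows by applying the product-preserving right adjoint $\intHom(X,\Empty)$ to $TA \cong A \times A$. Your closing observation that the composite agrees with the canonical comparison morphism $T\intHom(X,A) \to \intHom(X,TA)$ is a clean way to package the naturality check, and is consistent with how the paper treats these identifications.
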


\begin{Corollary}
The diffeological space 
\begin{equation*}
  C^\infty(X) := \intHom(X,\bbR)   
\end{equation*}
of smooth $\bbR$-valued functions on a diffeological space $X$ is elastic.
\end{Corollary}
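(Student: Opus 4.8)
The plan is to recognize this as a direct consequence of the two preceding results about tangent stable vector spaces, applied to the one-dimensional target $\bbR$. The key observation is that $\bbR$, equipped with its standard (manifold) diffeology, is tangent stable: indeed, $\bbR$ is a fine diffeological vector space, since on a finite-dimensional vector space the fine diffeology coincides with the manifold diffeology. Hence Proposition~\ref{prop:LiftA} applies and tells us directly that $\bbR$ is tangent stable. Alternatively, one can verify this by hand, observing that the tangent bundle of the smooth manifold $\bbR$ is canonically trivial, $T\bbR \cong \bbR \times \bbR$, which is exactly the defining condition $A \cong T_0 A$ of tangent stability.

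Once $\bbR$ is known to be tangent stable, I would simply invoke Proposition~\ref{prop:THomXAstable} with $A = \bbR$, which asserts that $\intHom(X, A)$ is tangent stable whenever $A$ is. This immediately yields that $C^\infty(X) = \intHom(X, \bbR)$ is tangent stable for every diffeological space $X$. Finally, since tangent stable diffeological vector spaces are elastic (as recorded in the text preceding Proposition~\ref{prop:LiftA}), the space $C^\infty(X)$ is elastic, which is the desired conclusion.

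I do not anticipate any genuine obstacle here, as the statement is a corollary in the strict sense: all the analytic and categorical work has been absorbed into Propositions~\ref{prop:LiftA} and~\ref{prop:THomXAstable}. The only point that warrants a word of care is the verification that $\bbR$ is tangent stable, and even this is routine once one notes that its standard diffeology is the fine one. The proof therefore reduces to a short chain of citations, specializing the mapping-space result to the target $\bbR$.
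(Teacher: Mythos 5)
Your proof is correct and is exactly the paper's intended derivation: $\bbR$ is tangent stable (as a fine diffeological vector space, per Proposition~\ref{prop:LiftA}, or directly from $T\bbR \cong \bbR \times \bbR$), Proposition~\ref{prop:THomXAstable} then gives that $\intHom(X,\bbR)$ is tangent stable, and tangent stable spaces are elastic. No gaps; the chain of citations is precisely what makes this a corollary.
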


\begin{Theorem}
\label{thm:FieldsElastic}
Let $\rho: F \to M$ be a fiber bundle of smooth manifolds. Then the diffeological space of sections $\Gamma(M, F)$ is elastic with tangent space 
\begin{equation*}
  T\Gamma(M,F) \cong \Gamma(M,VF)
  \,,
\end{equation*}
the space of sections of the vertical tangent bundle $VF := \ker T\rho \to M$.
\end{Theorem}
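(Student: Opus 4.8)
The plan is to reduce the statement to the tangent-stable case treated in Proposition~\ref{prop:THomXAstable} by presenting $\Gamma(M,F)$ as a retract of an open piece of a linear mapping space. First I would choose a fibrewise embedding of $\rho\colon F\to M$ into a trivial bundle, i.e.\ a smooth closed embedding $F\hookrightarrow M\times\bbR^K$ over $M$ (which exists for finite-dimensional $M$) realising each fibre $F_m$ as a submanifold of $\{m\}\times\bbR^K$. Applying the tubular neighbourhood theorem fibrewise yields an open subbundle $W\subset M\times\bbR^K$, with $W_m$ an open tube around $F_m$, together with a fibre-preserving smooth retraction $r\colon W\to F$ satisfying $r|_F=\id_F$. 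Post-composition then gives morphisms of diffeological spaces $\iota\colon\Gamma(M,F)\to\Gamma(M,W)$ and $r_*\colon\Gamma(M,W)\to\Gamma(M,F)$ with $r_*\circ\iota=\id$, exhibiting $\Gamma(M,F)$ as a retract of $\Gamma(M,W)$, while $\Gamma(M,W)$ sits inside $\Gamma(M,M\times\bbR^K)\cong\intHom(M,\bbR^K)=C^\infty(M,\bbR^K)$ as the sections whose graph lands in $W$.

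Second, I would assemble the elasticity of $\Gamma(M,F)$ from the stability results of Section~\ref{sec:Elastic}. Since $\bbR^K$ is tangent stable ($T\bbR^K\cong\bbR^K\times\bbR^K$), Proposition~\ref{prop:THomXAstable} shows that $C^\infty(M,\bbR^K)$ is tangent stable, hence elastic. Next I would argue that $\Gamma(M,W)$ is an open sub-diffeology of $C^\infty(M,\bbR^K)$: because the tangent bundle and its fibre products are local bundles (Proposition~\ref{prop:KanLocal}), all the morphisms appearing in axioms (E1)--(E4) restrict along open inductions, so open subspaces of elastic spaces are again elastic (Proposition~\ref{prop:LocFuncCover}). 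Finally Proposition~\ref{prop:RetractsElastic} upgrades elasticity of $\Gamma(M,W)$ to elasticity of its retract $\Gamma(M,F)$.

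Third, for the tangent space I would transport the computation through the same retract. Applying the functor $T=\Lanyy\hat{T}$ to $r_*\circ\iota=\id$ realises $T\Gamma(M,F)$ as a retract of $T\Gamma(M,W)$, with idempotent $T\iota\circ Tr_*$. Locality of $T$ together with tangent stability identifies $T\Gamma(M,W)$ with $\Gamma(M,VW)$, where $VW=\ker T(W\to M)$; under this identification the idempotent becomes post-composition with the fibrewise vertical derivative $Vr\colon VW\to VF$, and the relation $Vr\circ V\iota=V(r\circ\iota)=\id_{VF}$ shows that its image is exactly $\Gamma(M,VF)$. Chasing the identifications then yields $T\Gamma(M,F)\cong\Gamma(M,VF)$, the space of sections of the vertical tangent bundle.

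The main obstacle is the $D$-topological claim that $\Gamma(M,W)$ is open in $C^\infty(M,\bbR^K)$. For compact $M$ this is immediate, since the graph of a section is then compact and stays inside the open tube $W$ under small perturbations in any plot; for non-compact $M$ the condition ``graph in $W$'' is a uniform condition that need not be $D$-open, because the projection of the complementary closed set to the parameter space of a plot need not be closed. To cover the general case I would bypass the open-subspace step and verify axioms (E1)--(E5) directly for $X=\Gamma(M,F)$ using a globally defined fibrewise exponential map $\exp\colon VF\to F$, built from a connection and a complete fibre metric: a point of $T_kX$ over a section $s$ is a $k$-tuple $(\xi_1,\dots,\xi_k)$ of vertical vector fields along $s$, and the plot $(t_1,\dots,t_k)\mapsto\bigl(m\mapsto\exp_{s(m)}(t_1\xi_1(m)+\dots+t_k\xi_k(m))\bigr)$ represents it on a single domain, giving surjectivity of $\theta_{k,X}$ with no uniformity hypothesis. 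The genuinely hard part is then the injectivity of $\theta_{k,X}$ in (E1) together with the induction property of $\lambda_X$ in (E3): both require showing that two plots inducing the same infinitesimal data at $s$ become identified in the colimit, which is where the local triviality of $F$ over $M$ and the cosheaf and locality properties of $\Lanyy\hat{T}$ established in Section~\ref{sec:LeftKanExt} must be combined with the fibrewise exponential to control the glueing.
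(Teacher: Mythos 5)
The paper itself does not prove Theorem~\ref{thm:FieldsElastic}: it explicitly defers the proof, described as ``long and involved'', to the companion paper \cite{Blohmann:Elastic}, so no direct comparison with the paper's own argument is possible, and your proposal must stand on its own. As it stands it has a genuine gap. The retract argument (fibrewise embedding $F\hookrightarrow M\times\bbR^K$, vertical tubular neighbourhood $W$ with fibre-preserving retraction, $\Gamma(M,F)$ as a retract of $\Gamma(M,W)$, tangent stability of $C^\infty(M,\bbR^K)$ via Proposition~\ref{prop:THomXAstable}, then Proposition~\ref{prop:RetractsElastic}) is sound, but only under the hypothesis that $\Gamma(M,W)$ is $D$-open in $C^\infty(M,\bbR^K)$, which, as you yourself observe, holds by the tube lemma when $M$ is compact and genuinely fails otherwise: for $M=\bbR$, $F$ the zero section, $W=\{(m,y):|y|<1\}$, and the plot corresponding to $\tilde p(u,m)=um$, the preimage of $\Gamma(M,W)$ is $\{0\}$, which is not open. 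So the first two-thirds of your proposal prove the theorem only for compact base.

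For general $M$ your fallback is a sketch, not a proof, and it is missing two things. First, describing a point of $T_k\Gamma(M,F)$ over a section $s$ as a $k$-tuple of vertical vector fields along $s$ presupposes the identification $T\Gamma(M,F)\cong\Gamma(M,VF)$, which is part of the conclusion of the theorem; using it to get surjectivity of $\theta_{k,X}$ via the fibrewise exponential is circular unless that identification is established first, and that identification is itself a nontrivial computation of the colimit $\Colim_{yU\to X}y\hat{T}U$ over the category of plots of $\Gamma(M,F)$. Second, as you concede, the injectivity of $\theta_{k,X}$, the fact that it is an isomorphism of diffeological spaces rather than merely a bijection, the induction property of $\lambda_X$ in Axiom~(E3), and Axioms~(E4)--(E5) are all left unaddressed; these are precisely the points where one must show that two plots carrying the same infinitesimal data at $s$ become identified in the colimit, and your proposal names the relevant tools (the cosheaf and locality properties of $\Lanyy\hat{T}$ from Section~\ref{sec:LeftKanExt}, local triviality of $F$) without giving the mechanism. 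That the companion paper devotes a long technical argument to exactly this theorem corroborates that what is missing here is the substance of the proof, not routine verification.
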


\begin{Corollary}
\label{cor:SmoothMapElastic}
The diffeological mapping space $\intHom(M,N)$ of smooth manifolds $M$ and $N$ is elastic with tangent space 
\begin{equation*}
  T\intHom(M,N) \cong \intHom(M,TN)
  \,.
\end{equation*}
\end{Corollary}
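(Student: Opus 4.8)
The plan is to present $\intHom(M,N)$ as the section space of a trivial bundle and then invoke Theorem~\ref{thm:FieldsElastic}. Let $\rho := \pr_1 : M \times N \to M$ be the projection onto the first factor; this is a (globally trivial) fiber bundle of smooth manifolds with fiber $N$, so the theorem applies to it. A section of $\rho$ is a smooth map $s : M \to M \times N$ with $\pr_1 \circ s = \id_M$, hence of the form $s(m) = (m, f(m))$ for a unique smooth $f : M \to N$, and conversely every such $f$ yields a section. First I would verify that $f \mapsto (\id_M, f)$ is an isomorphism of diffeological spaces $\intHom(M,N) \xrightarrow{\cong} \Gamma(M, M \times N)$. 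This is a routine adjunction check: by the functional diffeology (Example~\ref{ex:Diffeologies}(e)) a plot $U \to \intHom(M,N)$ is a map whose adjoint $U \times M \to N$ is smooth, and under the correspondence this is exactly the condition that the adjoint $U \times M \to M \times N$, $(u,m) \mapsto (m, p(u)(m))$, be smooth and fiberwise a section, i.e.~a plot for the subspace diffeology on $\Gamma(M, M \times N) \subset \intHom(M, M \times N)$.

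With this identification in hand, Theorem~\ref{thm:FieldsElastic} shows that $\Gamma(M, M \times N)$, and therefore $\intHom(M,N)$, is elastic, so $\intHom(M,N) \in \Elst$.

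It then remains to identify the tangent space. Theorem~\ref{thm:FieldsElastic} gives $T\Gamma(M, M \times N) \cong \Gamma(M, V(M \times N))$ with $V(M \times N) = \ker T\rho$. Since $T(M \times N) \cong TM \times TN$ and $T\rho = T\pr_1$ is the projection onto the $TM$-factor, its kernel consists of the tangent vectors whose $TM$-component vanishes, whence $V(M \times N) \cong M \times TN$ as a bundle over $M \times N$ (the pullback of $TN \to N$ along $\pr_2$). Viewed as a bundle over $M$ through the composite $V(M \times N) \to M \times N \xrightarrow{\pr_1} M$, this is simply $\pr_1 : M \times TN \to M$, whose sections are precisely the smooth maps $M \to TN$. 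The same adjunction argument as above identifies $\Gamma(M, V(M \times N)) \cong \intHom(M, TN)$, and composing the isomorphisms yields $T\intHom(M,N) \cong \intHom(M, TN)$, as claimed.

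The only point demanding care — and the main, if modest, obstacle — is checking that the two identifications are genuine isomorphisms of diffeological spaces rather than mere bijections on underlying sets: that the functional diffeology on the mapping space matches the subspace/section diffeology, both for $\intHom(M,N) \cong \Gamma(M, M \times N)$ and for $\Gamma(M, V(M \times N)) \cong \intHom(M, TN)$. Both cases reduce to the cartesian closedness of $\Dflg$ together with the observation that adjoining the identity on the $M$-factor leaves smoothness of the adjoint unchanged, so no analytic input beyond Theorem~\ref{thm:FieldsElastic} is required.
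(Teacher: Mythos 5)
Your proposal is correct and is essentially the paper's intended derivation: the corollary follows from Theorem~\ref{thm:FieldsElastic} by identifying $\intHom(M,N)$ with the section space $\Gamma(M, M\times N)$ of the trivial bundle $\pr_1: M\times N \to M$, whose vertical tangent bundle is $M\times TN$, so that $T\intHom(M,N)\cong \Gamma(M, M\times TN)\cong \intHom(M,TN)$. Your attention to verifying that the functional diffeology agrees with the section diffeology under these identifications (via cartesian closedness of $\Dflg$) is exactly the right point of care.
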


\begin{Corollary}
The diffeological vector space of sections $\Gamma(M,A)$ of a smooth vector bundle $A \to M$ is tangent stable.
\end{Corollary}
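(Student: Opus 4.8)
The plan is to obtain the statement as a direct consequence of Theorem~\ref{thm:FieldsElastic}, once the vertical tangent bundle of a vector bundle has been identified explicitly. First I would regard $\rho: A \to M$ as a smooth fiber bundle and apply Theorem~\ref{thm:FieldsElastic}, which gives that $\Gamma(M,A)$ is elastic with
\[
  T\Gamma(M,A) \cong \Gamma(M, VA)
  \,,
\]
where $VA = \ker T\rho$ and $\Gamma(M,VA)$ denotes the sections of the composite projection $VA \to A \to M$; concretely these are the pairs consisting of a section $s$ of $A$ together with a vertical field along $s$.

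The key point is that the vertical tangent bundle of a vector bundle is canonically trivial along the fibers. Each fiber $A_x$ is a vector space, so the tangent space $T_a(A_x)$ at any $a \in A_x$ is canonically identified with $A_x$, whence $V_a A \cong A_{\rho(a)}$. These identifications assemble into a smooth isomorphism of bundles over $A$,
\[
  VA \cong \rho^* A = A \times_M A
  \,,
\]
the base point contributing the first factor and the vertical vector the second. Since taking sections sends the fiber product over $M$ to the product of section spaces, this yields
\[
  \Gamma(M, VA) \cong \Gamma(M, A) \times \Gamma(M, A)
  \,,
\]
a section of $A \times_M A \to M$ being exactly a pair of sections of $A \to M$, and the identification being a diffeomorphism for the section diffeologies.

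It remains to see that this chain identifies the natural map $\Gamma(M,A) \to T_0\Gamma(M,A)$ with the identity, which is what tangent stability requires. Restricting the isomorphism of Theorem~\ref{thm:FieldsElastic} to the base point $0 \in \Gamma(M,A)$, that is to the zero section $0_M$, identifies $T_0\Gamma(M,A)$ with $\Gamma(M, 0_M^* VA) \cong \Gamma(M,A)$. Under $V_{0_x}A \cong A_x$ the tangent vector represented by the scaling path $t \mapsto t\,s$ of a section $s$ maps to $s(x)$, so the natural map $\Gamma(M,A) \to T_0\Gamma(M,A)$ corresponds to $\id_{\Gamma(M,A)}$ and is an isomorphism. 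Hence $\Gamma(M,A)$ is tangent stable.

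I expect the only delicate point to be this last compatibility: one must check that the smooth trivialization $VA \cong \rho^* A$ coming from the linear fiber structure agrees with the identification produced by the scaling-path definition of tangent stability, and that this holds at the level of diffeological spaces rather than merely fiberwise. This reduces to unwinding the definition of the natural map $A' \to T_0 A'$ for $A' = \Gamma(M,A)$ against the explicit form of the isomorphism in Theorem~\ref{thm:FieldsElastic}; the remaining verifications are formal. I note that one cannot shortcut the argument through Proposition~\ref{prop:THomXAstable}, since for a nontrivial bundle $\Gamma(M,A)$ is not a mapping space $\intHom(M,A)$, so the localization implicit in Theorem~\ref{thm:FieldsElastic} genuinely does the work.
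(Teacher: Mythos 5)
Your proof is correct and takes essentially the route the paper intends: deduce the corollary from Theorem~\ref{thm:FieldsElastic} via the canonical identification $VA \cong \rho^* A \cong A \times_M A$ of the vertical tangent bundle of a vector bundle, so that $T\Gamma(M,A) \cong \Gamma(M, A \times_M A) \cong \Gamma(M,A) \times \Gamma(M,A)$. Your final compatibility check, that the scaling-path map $\Gamma(M,A) \to T_0\Gamma(M,A)$ becomes the identity under these identifications, is exactly the point needed to conclude tangent stability rather than a mere abstract isomorphism (the paper defers this verification to \cite{Blohmann:Elastic}), and your remark that Proposition~\ref{prop:THomXAstable} cannot be used as a shortcut for nontrivial bundles is also correct.
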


\bibliographystyle{alpha}
\bibliography{ClassicalFields}

\begin{thebibliography}{CSW14}

\bibitem[BH11]{BaezHoffnung:2011}
John~C. Baez and Alexander~E. Hoffnung.
\newblock Convenient categories of smooth spaces.
\newblock {\em Trans. Amer. Math. Soc.}, 363(11):5789--5825, 2011.

\bibitem[Blo]{Blohmann:Elastic}
Christian Blohmann.
\newblock Tangent structure and {C}artan calculus on elastic diffeological
  spaces.
\newblock In preparation.

\bibitem[CC14]{CockettCruttwell:2014}
J.~R.~B. Cockett and G.~S.~H. Cruttwell.
\newblock Differential structure, tangent structure, and {SDG}.
\newblock {\em Appl. Categ. Structures}, 22(2):331--417, 2014.

\bibitem[CC15]{CockettCruttwell:2015}
J.~R.~B. Cockett and G.~S.~H. Cruttwell.
\newblock The {J}acobi identity for tangent categories.
\newblock {\em Cah. Topol. G\'{e}om. Diff\'{e}r. Cat\'{e}g.}, 56(4):301--316,
  2015.

\bibitem[CSW14]{ChristensenSinnamonWu:2014}
J.~Daniel Christensen, Gordon Sinnamon, and Enxin Wu.
\newblock The {$D$}-topology for diffeological spaces.
\newblock {\em Pacific J. Math.}, 272(1):87--110, 2014.

\bibitem[CW16]{ChristensenWu:2016}
J.~Daniel Christensen and Enxin Wu.
\newblock Tangent spaces and tangent bundles for diffeological spaces.
\newblock {\em Cah. Topol. G\'{e}om. Diff\'{e}r. Cat\'{e}g.}, 57(1):3--50,
  2016.

\bibitem[CW19]{ChristensenWu:2019}
J.~Daniel Christensen and Enxin Wu.
\newblock Diffeological vector spaces.
\newblock 2019.

\bibitem[DF99]{DeligneFreed:1999}
Pierre Deligne and Daniel~S. Freed.
\newblock Classical field theory.
\newblock In {\em Quantum fields and strings: a course for mathematicians,
  {V}ol. 1, 2 ({P}rinceton, {NJ}, 1996/1997)}, pages 137--225. Amer. Math.
  Soc., Providence, RI, 1999.

\bibitem[IZ13]{Iglesias-Zemmour:Diffeology}
Patrick Iglesias-Zemmour.
\newblock {\em Diffeology}, volume 185 of {\em Mathematical Surveys and
  Monographs}.
\newblock American Mathematical Society, Providence, RI, 2013.

\bibitem[Joy12]{Joyce:2012}
Dominic Joyce.
\newblock On manifolds with corners.
\newblock In {\em Advances in geometric analysis}, volume~21 of {\em Adv. Lect.
  Math. (ALM)}, pages 225--258. Int. Press, Somerville, MA, 2012.

\bibitem[Kel05]{Kelly:Enriched}
G.~M. Kelly.
\newblock Basic concepts of enriched category theory.
\newblock {\em Repr. Theory Appl. Categ.}, (10):vi+137, 2005.
\newblock Reprint of the 1982 original [Cambridge Univ. Press, Cambridge;
  MR0651714].

\bibitem[ML98]{MacLane:Working}
Saunders Mac~Lane.
\newblock {\em Categories for the working mathematician}, volume~5 of {\em
  Graduate Texts in Mathematics}.
\newblock Springer-Verlag, New York, second edition, 1998.

\bibitem[Per16]{Pervova:2016}
Ekaterina Pervova.
\newblock Diffeological vector pseudo-bundles.
\newblock {\em Topology Appl.}, 202:269--300, 2016.

\bibitem[Ros84]{Rosicky:1984}
J.~Rosick\'{y}.
\newblock Abstract tangent functors.
\newblock {\em Diagrammes}, 12:JR1--JR11, 1984.

\bibitem[Vin08]{Vincent:2008}
Martin Vincent.
\newblock Diffeological differential geometry.
\newblock Master's thesis, Department of Mathematical Sciences, University of
  Copenhagen, 2008.
\newblock Available at
  \url{https://www.math.ku.dk/english/research/tfa/top/paststudents/ms-theses/martinvincent.msthesis.pdf},
  downloaded on 4/15/2019.

\end{thebibliography}

\end{document}